\newtheorem{theorem}{Theorem}[section]
\newtheorem{lem}[theorem]{Lemma}
\newtheorem{cor}[theorem]{Corollary}
\newtheorem{pro}[theorem]{Proposition}
\theoremstyle{definition}
\newtheorem{fed}[theorem]{Definition}
\newtheorem{exa}[theorem]{Example}
\theoremstyle{remark}
\newtheorem{rem}[theorem]{Remark}
\newcommand{\ydh}{{}^{H}_{H}\mathcal{YD}}
\newcommand{\ydhr}{\mathcal{YD}_H^H}
\def\pf{\begin{proof}}
\def\epf{\end{proof}}
\newcommand{\ydgdual}{{}_{\ku^G}^{\ku^G}\mathcal{YD}}
\newcommand{\nc}{\newcommand}
\nc{\D}{\Delta}
\nc{\cP}{\mathcal{P}} \nc{\cU}{\mathcal{U}} \nc{\cX}{\mathcal{X}}
\nc{\cE}{\mathcal{E}} \nc{\cS}{\mathcal{S}} \nc{\cA}{\mathcal{A}}
\nc{\cC}{\mathcal{C}} \nc{\cO}{\mathcal{O}} \nc{\cQ}{\mathcal{Q}}
\nc{\cB}{\mathcal{B}} \nc{\cJ}{\mathcal{J}} \nc{\cI}{\mathcal{I}}
\nc{\cM}{\mathcal{M}} \nc{\cG}{\mathcal{G}} \nc{\cL}{\mathcal{L}}
\nc{\cK}{\mathcal{K}}
\renewcommand{\_}[1]{_{\left( #1 \right)}}
\nc{\e}{\varepsilon}
\def\bs{\boldsymbol}
\nc{\ba}{\mathbf{c}} \nc{\bb}{\ba'} \nc{\bt}{\mathbf{t}}
\nc{\bi}{\mathbf{i}} \nc{\bj}{\mathbf{j}}
\nc{\bB}{\mathbf{B}} \nc{\bS}{\mathbf{S}}  
\newcommand{\Sn}{{\mathbb S}}
\newcommand{\I}{{\mathbb I}}
\newcommand\gA{\mathfrak{A}}
\newcommand\GL{\operatorname{GL}}
\newcommand\id{\operatorname{id}}
\newcommand{\ydg}{{}^{\ku G}_{\ku G}\mathcal{YD}}
\newcommand\Hom{\operatorname{Hom}}
\newcommand\End{\operatorname{End}}
\newcommand\sgn{\operatorname{sgn}}
\newcommand\Aut{\operatorname{Aut}}
\newcommand\gr{\operatorname{gr}}
\newcommand\co{\operatorname{co}}
\newcommand\ad{\operatorname{ad}}
\newcommand\rg{\operatorname{rg}}
\newcommand\ord{\operatorname{ord}}
\newcommand\Alg{\operatorname{Alg}}
\newcommand\Cleft{\operatorname{Cleft}}
\newcommand\Rep{\operatorname{Rep}}
\newcommand\CoRep{\operatorname{Corep}}
\def\k{\Bbbk}
\def\ku{\Bbbk}
\def\homh{\Hom_H^H}
\def\ot{\otimes}
\def\s{\mathbb{S}}
\def\N{\mathbb{N}}
\def\P{\mathbb{P}}
\def\F{\mathfrak{F}}
\def\Bb{\mathbb{B}}
\def\B{\mathfrak{B}}
\def\cH{\mathcal{H}}
\def\mH{\mathcal{H}}
\def\eps{\varepsilon}
\def\mA{\mathcal{A}}
\def\mG{\mathcal{G}}
\def\mJ{\mathcal{J}}
\def\Ss{\mathcal{S}}
\def\mL{\mathcal{L}}
\def\mR{\mathcal{R}}
\def\mO{\mathcal{O}}
\def\mE{\mathcal{E}}
\def\mH{\mathcal{H}}
\def\lg{\langle}
\def\rg{\rangle}
\newcommand{\J}{{\mathcal J}}
\newcommand{\Gc}{{\mathcal G}}
\newcommand{\ydcd}{{}^{\ku^{\Sn_{4}}}_{\ku^{\Sn_{4}}}\mathcal{YD}}
\newcommand{\ydhd}{{}^{H^*}_{H^*}\mathcal{YD}}
\newcommand{\ydgg}{{}^{G}_{G}\mathcal{YD}}
\newcommand{\xij}[1]{x_{(#1)}}
\newcommand{\fij}[1]{f_{#1}}
\newcommand{\oK}{\overline{K}}
\newcommand{\ke}{\overline{e}}
\newcommand{\oG}{\overline{G}}
\begin{document}



\title[Copointed Hopf algebras over $\Sn_4$]{Copointed Hopf algebras over $\Sn_4$}

\author[Garc\'ia Iglesias, Vay]{Agust\'in Garc\'ia Iglesias and Cristian Vay}

\address{FaMAF-CIEM (CONICET), Universidad Nacional de C\'ordoba,
Medina A\-llen\-de s/n, Ciudad Universitaria, 5000 C\' ordoba, Rep\'ublica Argentina.}

\email{(aigarcia|vay)@famaf.unc.edu.ar}

\thanks{\noindent 2010 \emph{Mathematics Subject Classification.}
16T05. \newline The work was partially supported by CONICET,
FONCyT PICT 2015-2854 and 2016-3957, Secyt (UNC), the MathAmSud project
GR2HOPF}

\begin{abstract}
We study the realizations of certain braided vector spaces of rack type 
as Yetter-Drinfeld modules over a cosemisimple Hopf algebra $H$. We apply the strategy developed in \cite{AAGMV} to compute their liftings and 
use these results to obtain the classification of finite-dimensional copointed Hopf algebras over $\Sn_4$. 
\end{abstract}

\maketitle

\section{Introduction}\label{sec:intro}

A braided vector space is a pair $(V,c)$ where $V$ is a vector space and $c\in \GL(V\ot V)$ is a map satisfying the {\it braid equation}
\[
(c\ot \id)(\id\ot c)(c\ot \id)=(\id\ot c)(c\ot \id)(\id\ot c).
\] 
If $H$ is a Hopf algebra, a {\it realization of $V$ over $H$} is an structure of Yetter-Drinfeld $H$-module on the vector space $V$ in such a way that the braiding $c$ 
coincides with the categorical braiding of $V$ as an object in $\ydh$.  

A {\it lifting} of $V\in\ydh$ is a Hopf algebra $A$ such that $\gr A$ is isomorphic to $\B(V)\# H$, that is the bosonization of the Nichols algebra of $V$ with $H$. In particular $H\simeq A_{(0)}$, the coradical of A, and $(V,c)$ is said to be the {\it infinitesimal braiding of $A$}. 

In \cite{AAGMV} we developed a strategy  to compute the liftings of a given $V\in \ydh$ as cocycle deformations of $\B(V)\# H$. 
In a few words, the strategy produces a family of $H$-module algebras $\mE(\bs\lambda)$, obtained as deformations of $\B(V)$. Set $\mH=\B(V)\# H$. If $\mE(\bs\lambda)\neq 0$, then $\mA(\bs\lambda)=\mE(\bs\lambda)\# H$ is an $\mH$-cleft object and the associated Schauenburg's left Hopf algebra $\mL(\bs\lambda)=L(\mA(\bs\lambda),\mH)$ is a lifting of $V\in\ydh$.

In particular, the algebras $\mE(\bs\lambda)$ are deformations of the Nichols algebra itself, and do not depend a priori of the realization, in the sense that they can be defined for generic parameters. The choice of a realization thus brings a restriction on these parameters as a second step.

Hence this new approach reduces the lifting problem to checking that certain algebras are nonzero. This technical step is solved for the cases we study here by means of computer program \cite{GAP} and the package \cite{GBNP}. 

In the present article, we follow this strategy to investigate the quadratic deformations of a Nichols algebra $\B(V)$ where $V$ is a braided vector space of rack type $V(X,q)$ or $W(q,X)$, cf.~\S\ref{sec:racks-bvs}. We give a necessary condition to realize such a $V$ over a Hopf algebra and we explore how the quadratic relations of $\B(V)$ are deformed. As a byproduct, we deduce the quadratic relations of $\B(W(X,q))$, using \cite{GV} and the corresponding 
description of $\B(V(X,q))$ given in \cite{GG}.

This general framework allows us to obtain new classification results about pointed and copointed Hopf algebras. We recall that a Hopf algebra $A$ is said to be {\it pointed} if $A_{(0)}=\k G(A)$ and {\it copointed} when $A_{(0)}=\k^G$ for some non-abelian group $G$. Our main result is the following.

\begin{theorem}\label{thm-copointed}
Let $L$ be a finite-dimensional copointed Hopf algebra over $\k^{\Sn_4}$, $L\not\simeq \k^{\Sn_4}$. Then $L$ is isomorphic to 
one and only one of the algebras in the following list:
\begin{enumerate}\renewcommand{\theenumi}{\alph{enumi}}
\renewcommand{\labelenumi}{(\theenumi)}
\item  $\cH_{[\ba]}$, $\ba \in \gA$, cf.~Definition  \ref{def: Aa}.
\item  $\cH^\chi_{[\ba]}$, $\ba \in \gA$, cf.~Definition  \ref{def: Aachi}.
\item  $\widetilde\cH_{[\ba]}$, $\ba \in \widetilde\gA$, cf.~Definition  \ref{def: Ca}.
\end{enumerate}
In particular, $L$ is a cocycle deformation of $\gr L$.
\end{theorem}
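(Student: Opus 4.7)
The plan is to follow the general strategy of \cite{AAGMV}, specializing it to $H=\k^{\Sn_4}$. Let $L$ be a finite-dimensional copointed Hopf algebra over $\k^{\Sn_4}$ with $L\neq \k^{\Sn_4}$. By standard lifting theory, $\gr L\simeq \B(V)\#\k^{\Sn_4}$ for some $V\in\ydcd$ with $\dim\B(V)<\infty$. So the first step is to enumerate the possible infinitesimal braidings $V$, and the second step is to list all liftings for each such $V$.

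For Step 1, I would use the necessary condition on realizations of braided vector spaces of rack type developed in \S\ref{sec:racks-bvs} of the paper, together with the known finite-dimensional Nichols algebras associated to racks on $\Sn_4$. Concretely, the braided vector spaces giving rise to finite-dimensional Nichols algebras supported on $\Sn_4$ are those associated to the transposition rack $\cO_2^4$ (with the constant cocycle $-1$, and with the sign cocycle $\chi$) and the rack of $4$-cycles $\cO_4^4$ (with its cocycle). Translating these data to realizations in $\ydcd$ (as opposed to $\ydcuatro$) yields exactly three families of infinitesimal braidings; these are the $V$'s underlying the three families (a), (b), (c) in the statement.

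For Step 2, apply the machinery of \cite{AAGMV} to each $V$. This requires: (i) introducing deformation parameters $\bs\lambda$ compatible with the copointed setting; (ii) writing down the candidate $H$-module algebras $\mE(\bs\lambda)$ as deformations of $\B(V)$ by modifying the quadratic (and higher) relations, using the explicit description of the relations of $\B(V(X,q))$ and $\B(W(X,q))$ treated in the earlier sections; (iii) showing $\mE(\bs\lambda)\neq 0$; (iv) forming $\mA(\bs\lambda)=\mE(\bs\lambda)\#\k^{\Sn_4}$, verifying it is a cleft object over $\mH=\B(V)\#\k^{\Sn_4}$, and setting $\mL(\bs\lambda)=L(\mA(\bs\lambda),\mH)$. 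The three families of liftings $\cH_{[\ba]}$, $\cH^\chi_{[\ba]}$, $\widetilde\cH_{[\ba]}$ are then defined as $\mL(\bs\lambda)$ for parameters $\bs\lambda$ ranging in the appropriate parameter spaces $\gA$ and $\widetilde\gA$ (modulo the equivalence induced by the normalization step); this identifies (a)--(c) as bona fide liftings.

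For Step 3, I have to show that \emph{every} lifting arises this way and that the parametrization is one-to-one. Exhaustiveness follows from the fact that \cite{AAGMV} shows the set of parameters produces all isomorphism classes of cocycle deformations of $\mH$, once $\mE(\bs\lambda)\neq 0$ is established. Uniqueness (different brackets $[\ba]$ yield non-isomorphic Hopf algebras, and no algebra in (a) is isomorphic to one in (b) or (c)) follows from invariants such as the infinitesimal braiding (distinguishing the three families) and the action of $\Aut(\k^{\Sn_4})\simeq \Aut(\Sn_4)=\Sn_4$ on the parameter set, which collapses to the equivalence defining $\gA$ and $\widetilde\gA$. The final assertion that $L$ is a cocycle deformation of $\gr L$ is built into the construction, since $\mL(\bs\lambda)=L(\mA(\bs\lambda),\mH)$ is by Schauenburg's theorem cocycle equivalent to $\mH=\gr L$.

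The main obstacle is the nonvanishing step (iii): verifying $\mE(\bs\lambda)\neq 0$ for each family and each admissible $\bs\lambda$. As indicated in the introduction, this boils down to a Gr\"obner-basis/dimension computation that is handled with \cite{GAP} and \cite{GBNP}; the conceptual difficulty is to set up the presentation of $\mE(\bs\lambda)$ so that the dimension of $\B(V)$ is achieved, and then to rule out collapses at higher degrees. A subordinate but nontrivial difficulty is the bookkeeping needed to normalize the parameters $\bs\lambda$ modulo gauge equivalence, which leads to the finite sets $\gA$ and $\widetilde\gA$ of Definitions \ref{def: Aa}, \ref{def: Aachi}, \ref{def: Ca}.
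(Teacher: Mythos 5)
Your architecture is the same as the paper's: enumerate the possible infinitesimal braidings via the classification of finite-dimensional Nichols algebras over $\Sn_4$, construct the liftings through the cleft-object machinery of \cite{AAGMV} (with $\mE(\bs\lambda)\neq 0$ checked by Gr\"obner bases in \texttt{GAP}), and read off the cocycle-deformation statement from Schauenburg's theory. But two inputs are missing. The lesser one: your opening claim that ``by standard lifting theory $\gr L\simeq \B(V)\#\k^{\Sn_4}$'' is not standard. A priori $\gr L\simeq R\#\k^{\Sn_4}$ with $R$ coradically graded and $\B(V)\subseteq R$; the equality $R=\B(V)$ is the generation-in-degree-one theorem, which for $\Sn_4$ is a genuine cited input (\cite[Theorem 2.1]{AG1} together with \cite[Theorem 4.7]{AHS}), not a formality. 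Without it your enumeration of the possible $\gr L$ is incomplete.

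The more serious gap is that your exhaustiveness argument in Step 3 is circular. You argue that every lifting appears in the list because the parameters produce ``all isomorphism classes of cocycle deformations of $\mH$''; but the assertion that an arbitrary lifting \emph{is} a cocycle deformation of its graded version is part of the conclusion to be proved, so it cannot be used to establish exhaustiveness. The actual mechanism (Lemma \ref{thm:liftings-step-3}, specialized in Lemma \ref{le:step 3 para copunteada}) is a vanishing computation: given a lifting map $\phi:T(W)\#H\to L$, one shows that $\phi(\mJ^2(W))$ lands in $H$ (degree zero) precisely because $\homh(\mJ^2(W),W)=0$, and only then does $L\simeq\mH(\bs\lambda)$ for some $\bs\lambda$ follow. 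This vanishing must be verified separately for each of the three braidings (Lemmas \ref{lem:cop-Lambda-1} and \ref{lem:cop-Lambda-2}, resting on faithfulness of the racks and Remark \ref{rem:condition for 0 copointed}); your plan omits this computation entirely, and it is the key step that makes the list complete. (A small additional slip: the parameter spaces $\gA$ and $\widetilde\gA$ are hyperplanes in $\ku^{X}$ and $\ku^{Y}$, not finite sets; the finiteness enters only through the quotient by the $\Gamma_4$-action when listing isomorphism classes.)
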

\noindent
Finite-dimensional copointed Hopf algebras over $\k^{\Sn_3}$ are classified in \cite{AV}.
\pf
Finite-dimensional Nichols algebras over $\Sn_4$ are classified in \cite[Theorem 4.7]{AHS} and every such $L$ is generated in degree one by \cite[Theorem 2.1]{AG1}. The algebras listed in the theorem are a complete family of deformations of these Nichols algebras by Propositions 
\ref{prop:liftings of On2} and \ref{prop:liftings of Y}. The liftings are constructed using the strategy in \cite{AAGMV}, so they arise as cocycle deformations of their graded versions, see 
Propositions \ref{pro:copointed-review}.
\epf

\subsection{Pointed Hopf algebras}\label{sec:list} We fix the following list of pairs $(X,q)$ of a rack $X$ and a 2-cocycle $q\in Z^2(X,\k)$, see \S \ref{sec:examples} for unexplained notation:
\begin{enumerate} 
\item[(i)] The conjugacy class of transpositions $\cO_2^4\subset \Sn_4$, $q\equiv -1$;
\item[(ii)] The conjugacy class of transpositions $\cO_2^4\subset \Sn_4$, $q=\chi$;
\item[(iii)] The conjugacy class of 4-cycles $\cO_4^4\subset \Sn_4$, $q\equiv -1$.
\end{enumerate} 

We turn our attention to pointed Hopf algebras and extend some classification results about pointed Hopf algebras over $\Sn_4$ to any group with a realization of the right braided vector space. Some of these algebras have been considered previously in the literature, although not with this generality. This is the content of the next theorem.

See \eqref{eqn:H-pointed} for the presentation of the Hopf algebras $\mH(\bs\lambda)$ associated to each family of parameters $\bs\lambda\in\bs\Lambda(X,q)$.

\begin{theorem}\label{thm-pointed}
Let $H$ be a cosemisimple Hopf algebra and let $(X,q)$ be as in \S \ref{sec:list}.
Let $L$ be a Hopf algebra whose infinitesimal braiding $M$ is a principal realization of $V=V(X,q)$ in $\ydh$. Then there is $\bs\lambda\in\lambda(X,q)$ such that $L\simeq \mH(\bs\lambda)$.
In particular, $L$ is a cocycle deformation of $\gr L\simeq \B(M)\# H$.
\end{theorem}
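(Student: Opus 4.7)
The plan is to apply the strategy of \cite{AAGMV} already used for the copointed case in Theorem \ref{thm-copointed}, handling each of the three pairs $(X,q)$ from \S\ref{sec:list} in turn. The first step is to identify the graded version: since the Nichols algebras $\B(V(X,q))$ are finite-dimensional \cite[Theorem 4.7]{AHS} and are generated in degree one by \cite[Theorem 2.1]{AG1}, any Hopf algebra $L$ with infinitesimal braiding a principal realization $M$ of $V$ must satisfy $\gr L \simeq \B(M)\# H$. Setting $\mH = \B(M)\# H$, the problem reduces to classifying the liftings of $V \in \ydh$ over the cosemisimple $H$, with the additional claim that each arises as a cocycle deformation of $\mH$.

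Next I would produce all candidate liftings. Following \S\ref{sec:racks-bvs} and the general framework of the introduction, one writes down the deformations $\mE(\bs\lambda)$ of $\B(V)$ by parametrizing the possible deformations of the defining quadratic relations; these are defined at the abstract level of the braided vector space $(V,c)$ and a priori depend only on $(X,q)$. A principal realization then imposes explicit compatibility constraints on $\bs\lambda$, cutting out the parameter set $\bs\Lambda(X,q)$ referenced in the statement. For each admissible $\bs\lambda$, one needs $\mE(\bs\lambda) \neq 0$; following the authors' approach, this would be verified by Gr\"obner basis computations with \cite{GAP} and the package \cite{GBNP}, showing that the deformed algebras retain the dimension of $\B(V)$. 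Once this is granted, $\mA(\bs\lambda) = \mE(\bs\lambda)\# H$ is an $\mH$-cleft object and $\mL(\bs\lambda) = L(\mA(\bs\lambda),\mH)$ is a lifting of $V$ that, by construction, is a cocycle deformation of $\mH$; comparing presentations with \eqref{eqn:H-pointed} yields $\mL(\bs\lambda) \simeq \mH(\bs\lambda)$.

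Finally I would show that every such $L$ is isomorphic to some $\mH(\bs\lambda)$. Because $\B(V)$ is quadratic and generated in degree one, the defining ideal of any lifting is forced to be a Hopf ideal generated by deformations of the quadratic relations, and these deformations are precisely parametrized by a choice of $\bs\lambda \in \bs\Lambda(X,q)$; the higher-degree relations then become automatic from the nonvanishing of $\mE(\bs\lambda)$ and the comparison of Hilbert series. The hardest step will be exactly this nonvanishing: without the Gr\"obner-basis input one cannot rule out the possibility that a proposed $\mE(\bs\lambda)$ collapses to a proper quotient of the expected dimension, so each of the three rack-cocycle pairs must be checked individually, in close parallel with the copointed analysis underlying Theorem \ref{thm-copointed}.
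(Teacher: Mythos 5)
Your overall architecture matches the paper's: produce the candidate liftings $\mH(\bs\lambda)$ via the cleft-object strategy (with the nonvanishing of $\mE(\bs\lambda)$ checked by Gr\"obner bases, as in Proposition \ref{pro:nonzero} and Lemma \ref{lem:nonzero}), and then argue that every lifting occurs on the list. The first half of your argument is essentially Proposition \ref{pro:pointed-review} together with Lemma \ref{le:L1}, and it is fine.

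The gap is in the completeness step. You assert that, since $\B(V)$ is quadratic and generated in degree one, ``the defining ideal of any lifting is forced to be a Hopf ideal generated by deformations of the quadratic relations, and these deformations are precisely parametrized by a choice of $\bs\lambda$.'' But a priori the lifting map $\phi:T(V)\#H\to L$ only sends a quadratic relation $b_C$ into $L_{(1)}=\phi((\ku\oplus V)\#H)$, cf.~\eqref{eq:properties of A and phi}; nothing about generation in degree one forces the component of $\phi(b_C)$ lying in $V\#H$ to vanish, and if it does not vanish the deformation is not described by a scalar $\lambda_C$ at all. Ruling this out is precisely the content of Lemma \ref{thm:liftings-step-3}: one must prove $\homh(\mJ^2(V),V)=0$. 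In the paper this is the genuinely nontrivial part of the classification half: Proposition \ref{pro:condition} reduces the vanishing to the rack-theoretic condition \eqref{eqn:condition}, namely $\ke_C\neq\ke_{xx}$ in $\oG$, and Corollary \ref{cor:condition} verifies that condition for the three pairs of \S\ref{sec:list} --- via \cite[Lemma 3.7]{GV} for constant $q$, and by direct inspection of the cocycle values for $q=\chi$. Your appeal to a ``comparison of Hilbert series'' does not substitute for this, since it addresses higher-degree relations rather than the possible degree-one tail of the deformed quadratic ones. As written, your argument shows that each $\mH(\bs\lambda)$ is a lifting, but not that these exhaust all liftings.
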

\pf
By Proposition \ref{pro:pointed-review}, $\mH(\bs\lambda)$ is a lifting of $V$ for each $\bs\lambda\in\Lambda$.

On the other hand, any such $L$ is a lifting of $M$ by \cite[Theorem 2.1]{AG1} and references therein. 
By Corollary \ref{cor:condition}, there is $\bs\lambda$ such that $L\simeq \mH(\bs\lambda)$, thus $L$ is a cocycle deformation of 
$\B(M)\# H$ by Proposition \ref{pro:pointed-review}. 
\epf

When $G=\Sn_4$, Theorem \ref{thm-pointed} is \cite[Main Theorem]{GG}, where the classification is first completed, using \cite[Theorem 4.7]{AHS}. The case $(\mO_2^4,-1)$ had been fully understood previously in \cite[Theorem 3.8]{AG1}. Theorem \ref{thm-pointed} also gives an alternative proof to \cite[Corollary 7.8]{GIM}, see also \cite{GaM}, where it is shown that these algebras are cocycle deformations of their graded versions.

\medskip

The article is organized as follows. 
In \S \ref{sec:pre} we collect some preliminaries on Hopf algebras and racks and in \S \ref{sec:liftings} we recall the strategy developed in \cite{AAGMV} to compute liftings, we review the basic steps in the context of copointed Hopf algebras. 
In \S \ref{sec:realizations} we study realization of braided vector spaces associated to racks and cocycles. 
In \S \ref{sec:pointed} we turn to pointed Hopf algebras and obtain new classification results, summarized in Theorem \ref{thm-pointed}. 
In \S \ref{sec:copointed} we present our main result Theorem \ref{thm-copointed}, which is heavily inspired by \cite{AV2} and follows by our work in \cite{AAGMV}. The article contains an Appendix in which we define three families of algebras and show that they are non-trivial, using \cite{GAP}. The program files together with the log files are hosted on the authors' personal webpages, see \texttt{www.famaf.unc.edu.ar/$\sim$(aigarcia|vay)}.

\subsection*{Acknowledgments.} Parts of this work were completed while the first author was visiting Ben Elias in the University of Oregon (USA) and the second was visiting Simon Riche in the University of Clermont Ferrand (France); both visits supported by CONICET. The authors warmly thank these colleagues for their hospitality. We thank the referee for his/her suggestions.

\section{Preliminaries}\label{sec:pre} 

We shall work over an algebraically closed field $\k$ of characteristic zero. We write $\P^k$ for the projective space associated to $\k^{k+1}$ and $[v]$ for the class of $0\neq v\in \k^{k+1}$.
 If $A$ is a $\k$-algebra and $X\subset A$,
then $\langle X\rangle$ is the two-sided ideal generated by $X$. We denote by $\Alg(A,\k)$ the set of algebra maps $A\to \k$.

Let $G$ be a finite group. We denote by $\ku G$ its
group algebra and by $\ku^G$ the function algebra on $G$. The usual basis of $\ku G$ is denoted by 
$\{g:g\in G\}$, so $\{\delta_g:g\in G\}$ is its dual basis in $\ku^G$. We set $e$ the identity element of $G$.

Fix $n\in\N$; we set $\I_n\coloneqq\{1, \dots,n\}\subset \N$. The symmetric group in $n$ letters 
is
denoted by $\Sn_n$ and $\Bb_n$ shall denote the {\it braid group} in $n$ strands. 
We let $\sgn:\Sn_n\to\k$ denote the sign representation of $\Sn_n$. If $X$ is a finite set, we write $|X|$ for the cardinal of $X$.

\subsection{Hopf algebras}
Let $H$ be a Hopf algebra; 
we write by $m:H\ot H\to H$, resp. $\Delta:H\to H\ot H$, for the multiplication, resp. comultiplication, of $H$. We write $\ast$ for the convolution product in the algebra $H^*$.

We denote by
$\{H_{(i)}\}_{i\geq0}$ the coradical filtration of $H$ and by $\gr H=\oplus_{n\geq0}\gr^n
H=\bigoplus_{n\geq0}H_{(n)}/H_{(n-1)}$ the associated graded coalgebra of $H$; $H_{(-1)}=0$. This is a graded Hopf algebra when 
$H_{(0)}\subset H$ is a subalgebra. We write $G(H)\subset H$ for the group of group-like elements of $H$; in particular $G(H)\subset H_{(0)}$. We write $P_{g,g'}(H)\subset H$ for the set of $(g,g')$-skew primitive elements in $H$, $g,g'\in G(H)$, and set $P(H)=P_{1,1}(H)$.

We denote by $\Rep H$, resp. $\CoRep H$, the tensor category of $H$-modules, resp. $H$-comodules. 
An $H$-(co)module algebra is thus an algebra in $\Rep H$, resp. $\CoRep H$. Recall that when $\mA\in\Rep H$ is an algebra, then $A=\mA\ot H$ becomes a $\k$-algebra, denoted $\mA\# H$, with multiplication
\begin{align}\label{eqn:smash}
(a\ot h)(a'\ot h')=a(h_{(1)}\cdot a')\ot h_{(2)}h'.
\end{align}

\subsubsection{Cleft objects}\label{sec:cleft}
An $H$-comodule algebra $A$ is said to be a (right) {\it cleft object} of $H$ if it has trivial coinvariants and there is a convolution-invertible comodule isomorphism $\gamma:H\to A$. If we choose $\gamma$ so that $\gamma(1)=1$, then we say it is a {\it section}. 
In this setting, there is a new Hopf algebra $L=L(A,H)$, together with an algebra coaction $A\to L\ot A$ such that $A$ becomes a $(L,H)$-bicleft object. Moreover, $L$ is a cocycle deformation of $H$ and any cocycle deformation arises in this way; see \cite{S} for details.

\subsection{Nichols algebras}
Let $(V,c)$ be a braided vector space. We denote by $\B(V)$ the {\it Nichols algebra} of $V$. Recall that this is the quotient  $T(V)/\cJ(V)$, where $\cJ(V)=\bigoplus_{n\geq 2}\cJ^n(V)$ and each homogeneous component $\cJ^n(V)$ is the kernel of the so-called $n$th quantum symmetrizer $\varsigma_n\in\End(V^{\ot n})$. See \cite{AS2} for details. 
We write $\cJ_r(V)\subset T(V)$ for the ideal generated by  $\bigoplus_{2\leq n\leq r}\cJ^n(V)$ and denote by $\widehat{\B}_r(V)$ the $r$th-approximation of $\B(V)$. Notice that $\cJ^2(V)=\cJ_2(V)$.

\subsection{Yetter-Drinfeld modules}

We write $\ydh$ for the category of Yetter-Drinfeld modules over $H$; this is a braided tensor category.
We denote by $\homh(V,W)$ the space of morphisms $V\to W$ in $\ydh$.
 We recall from  \cite[Proposition 2.2.1]{AG1} that there are braided equivalences $\ydh\simeq \ydhr$ when $H$ has bijective antipode and $\ydh\simeq \ydhd$ when $H$ is finite-dimensional. When $G$ is a finite group, the equivalence $\ydg\simeq \ydgdual$ will be of special interest in our setting.
We shall also write $\ydgg\coloneqq\ydg$.

\subsubsection{Liftings}If $A$ is a lifting of $V\in\ydh$, that is $\gr A\simeq \B(V)\# H$ cf.~\S \ref{sec:intro}, then there is an epimorphism of Hopf
algebras $\phi:T(V)\#H\to A$, the so-called {\it lifting map}  \cite[Proposition 2.4]{AV},  such that
\begin{align}\label{eq:properties of A and phi}
\phi_{|H}=\id,&& \phi_{|V\#H}\mbox{ is injective}&&\mbox{ and }&&\phi((\ku\oplus
V)\#H)=A_{(1)}.
\end{align}

Let $(V,c)$ be a braided vector space with a realization $V\in\ydh$. A lifting of $(V,c)$ over $H$ is a lifting of $V\in\ydh$. The realization $V\in \ydh$ is said to be {\it principal} when there is a basis $\{v_i\}_{i\in I}$ of $V$ and elements $\{g_i\}_{i\in I}\in G(H)$ such that the $H$-coaction on $V$ is determined by $v_i\mapsto g_i\ot v_i$, $i\in I$. 

\subsection{Racks}\label{sec:racks}
We recall that a {\it rack} $X=(X,\rhd)$ is a pair consisting of a nonempty set $X$ and an operation $\rhd:X\times X\rightarrow
X$ satisfying a self distributive law:
\[
x\rhd (y\rhd z)=(x\rhd y)\rhd (x\rhd z), \quad x,y,z\in X
\] 
and such that the maps $\phi_x\colon X\longmapsto X$, $y\mapsto x\rhd y$, $y\in X$,
are bijective for each $x\in X$. When $\phi_x=\phi_y$ implies $x=y$ in $X$, the rack is said to be {\it faithful}.  A rack is called {\it indecomposable} if it cannot be written as a disjoint union $X=Y\sqcup Z$ of two subracks $Y,Z\subset X$. A quandle is a rack in which $x\rhd x=x$, $x\in X$.

The prototypical example of a rack is given by $X=\mO\subset G$ a conjugacy class inside a group $G$, with $g \rhd h=ghg^{-1}$, $g,h\in \mO$; notice that this is indeed a quandle. 

The enveloping group $G_X$ of $X$ is the quotient of the free group $F(X)=\langle f_x\mid x\in X\rangle$ by the relations $f_xf_y=f_{x\rhd y}f_x$ for all $x, y\in X$. This is an infinite group. The finite 
enveloping group $F_X=G_X/S_X$ is defined as the quotient of $G_X$ by the normal subgroup $S_X=\lg f_x^{n_x}, x\in X\rg$; here $n_x=\ord\phi_x$, $x\in X$.

A {\it 2-cocycle} on $X$ is a function  $q:X\times X\rightarrow\ku^\times$, $(x,y)\mapsto q_{x,y}$, satisfying 
\[q_{x,y\rhd z}q_{y,z}=q_{x\rhd y,x\rhd z}q_{x,z}, \qquad \text{all }x,y,z\in X.\]

We write $Z^2(X,q)$ for the set of 2-cocycles on $X$. We say that a 2-cocycle is {\it constant} if $q_{x,y}=\omega$, $\forall\,x,y\in X$ and a fixed $\omega\in\k^\times$; we write 
$q\equiv\omega$.

\subsubsection{Braided vector spaces associated to racks}\label{sec:racks-bvs}

If $(X,\rhd)$ is a rack and $q$ is a 2-cocycle on $X$, then a structure of braided vector space on the linear span of $\{v_x|x\in X\}$ is determined by
\begin{align}\label{eqn:rack-br}
c(v_x\ot v_y)=q_{x,y}v_{x\rhd y}\ot v_x, \qquad x,y\in X.
\end{align} 
We denote this braided vector space by $V(X,q)$ and refer to a realization of $V(X,q)$ over $H$ as a realization of $(X,q)$. We write $\B(X,q)\coloneqq\B(V(X,q))$ for the corresponding 
Nichols algebra.

There is another braided vector space associated to $(X,q)$ which we denote $W(q,X)$. Following \cite{GV}, this is the vector space spanned by $\{w_x|x\in X\}$ with braiding
\begin{align}\label{eqn:rack-br-dual}
c(w_x\ot w_y)=q_{y,x}w_y\ot w_{y\rhd x}, \qquad x,y\in X.
\end{align} 
We set $\B(q,X)\coloneqq\B(W(q,X))$, see \S\ref{subsec:WqX}.

\subsection{Quadratic Nichols algebras}\label{subsec:quadratic}

A description of the 2nd (or quadratic) approximation $\widehat{\B}_2(X,q)$ can be found in \cite[Lemma 2.2]{GG}. The quadratic relations in $\B(X,q)$ are para\-metrized by a given subset $\mR'=\mR'(X,q)$ of equivalence classes in $\mR=X\times X/\sim$, where $\sim$ stands for the relation generated by $(i,j)\sim(i\rhd j,i)$. 

More precisely, if $(i,j)\in \mR$, then it defines a class $C\in\mR$ as 
\[
C=\{(i_2,i_1), \ldots, (i_{|C|},i_{|C|-1}) , (i_1,i_{|C|})\},
\] with $i_1=j$, $i_2=i$, $i_{h+2}=i_{h+1}\rhd i_h$, $1\leq h\leq |C|-2$ and $i_1=i_{|C|}\rhd i_{|C|-1}$. Thus, $\mR'$ consists of those classes for which $\prod_{h=1}^{|C|} q_{i_{h+1},i_h} =(-1)^{|C|}$. Set
\begin{align}\label{eqn:qrels}
b_C:&= \sum_{h=1}^{|C|}\eta_h(C)
\, v_{i_{h+1}}v_{i_h}, \quad C\in \mR'(X,q);
\end{align}
where $\eta_1(C)=1$ and $\eta_h(C)=(-1)^{h+1}q_{{i_2i_1}}q_{{i_3i_2}}\ldots q_{{i_hi_{h-1}}}$, $h\ge 2$. Hence
\[\mJ^2(X,q)=\ku\{ b_C:C\in \mR'(X,q) \}.\] See {\it loc.~cit.} for unexplained notation and details. 

We shall write $C[h]=\{(i_{h+1},i_h),\dots,(i_{2},i_1),\dots,(i_{h},i_{h-1})\}$, so $C=C[1]$. Also, $x\rhd(i,j)\coloneqq(x\rhd i,x\rhd j)$ and $x\rhd C\coloneqq\{x\rhd (i_{2},i_1),\ldots, x\rhd (i_1,i_{|C|}) \}$.

\

We shall also give a description of the set $\mJ^2(q,X)$ in \S \ref{sec:quadratic-q-X} below.

\begin{rem}Let $X$ be a quandle.
Assume that $\B(X,q)$ is quadratic and finite-dimensional. Then $q_{x,x}=-1$ for all $x\in X$.

\smallskip

Indeed, $q_{x,x}$ is a root of 1, different from 1, for each $x\in X$, as otherwise $\k [v_x]\subseteq \B(X,q)$ and we assume that $\dim\B(X,q)<\infty$. Now, if $N_x=\ord(q_{x,x})$, then $v_x^{N_x}\in \J(X,q)$; hence $N_x=2$. \qed
\end{rem}

\subsection{Nichols algebras associated to symmetric groups}\label{sec:examples}
Let us fix $n\geq 3$ and let
$X=X_n=\cO_2^n$ be the rack of the conjugacy class of transpositions in $\Sn_n$ and let $Y=\cO_4^4$ be the rack given by the conjugacy class of $\mbox{\footnotesize (1234)}\in\Sn_4$. 

We shall consider the constant cocycle $q\equiv-1$ on $X_n$ and $Y$. Also, let $\chi:\Sn_n\times\cO_2^n\rightarrow\ku^\times$ be the map defined in \cite{MS} by 
\begin{align}\label{eqn:MS}
\chi(g,(ij))=
\begin{cases}
1 & \mbox{ if }g(i)<g(j),\\
-1& \mbox{ if }g(i)>g(j),
\end{cases} \quad g\in\Sn_n.
\end{align}
Then $\chi\coloneqq\chi_{|X\times X}\colon X\times X\to\k^\times$ is a non-constant 2-cocycle on $X$. 

\begin{theorem}\label{thm:nichols}
Let $n=3,4,5$.
\begin{enumerate}\renewcommand{\theenumi}{\alph{enumi}}
\renewcommand{\labelenumi}{(\theenumi)}
\item\cite[\S 6.4]{MS},\cite{grania} The ideal $\cJ(X_n,-1)$  is generated by $\xij{ij}^2$,
\begin{align}
\label{eq:Vn}\xij{ij}\xij{kl}+\xij{kl}\xij{ij}&&\mbox{and}&&\xij{ij}\xij{ik}+\xij{ik}\xij{jk}+\xij{jk}\xij{ij}
\end{align}
for all $(ij),(kl),(ik)\in\cO_2^n$ with $\#\{i,j,k,l\} = 4$. 

\smallskip
\item\cite[\S 6.4]{MS},\cite{grania} The ideal $\cJ(X_n,\chi)$  is generated by $\xij{ij}^2$ and 
\begin{align}\label{eq:Wn}
\begin{split}
&\xij{ij}\xij{kl}-\xij{kl}\xij{ij}\,\mbox{ for all }\,\#\{i,j,k,l\} = 4,\\
&\xij{ij}\xij{ik}-\xij{ik}\xij{jk}-\xij{jk}\xij{ij},\,\xij{ik}\xij{ij}-\xij{jk}\xij{ik}-\xij{ij}\xij{jk}
\end{split}
\end{align}
for all $1\leq i<j<k\leq n$. 

\smallskip
\item\cite[Theorem 6.12]{AG3}
The ideal $\cJ(Y,-1)$ is generated by $x_\sigma x_{\sigma^{-1}}+x_{\sigma^{-1}}x_\sigma$,
\begin{align}\label{eq:U}
x_\sigma^2\quad\mbox{and}\quad x_\sigma x_{\tau}+x_\nu x_\sigma+x_\tau x_\nu
\end{align}
for all $\sigma,\tau\in\cO_4^4$ with $\sigma\neq\tau^{\pm1}$ and $\nu=\sigma\tau\sigma^{-1}$.
\end{enumerate}
\end{theorem}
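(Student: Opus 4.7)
The strategy is the same in each of the three cases: verify that the listed elements lie in $\cJ$, then show that the quotient of $T(V)$ by the ideal they generate has dimension equal to the known value of $\dim\B(V)$, forcing the two algebras to coincide.

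For the first step, the monomials $v_x^2$ are immediate: since $q_{x,x}=-1$ for the transpositions and $4$-cycles under consideration (with the standard convention $i<j$ in \eqref{eqn:MS}), one has $c(v_x\ot v_x)=-v_x\ot v_x$, hence $v_x^2\in\ker\varsigma_2=\cJ^2$. For the quadratic binomials my plan is to apply the description of $\cJ^2$ recalled in \S\ref{subsec:quadratic}: enumerate the equivalence classes $C\in X\times X/\sim$ and retain those satisfying $\prod_{h=1}^{|C|}q_{i_{h+1},i_h}=(-1)^{|C|}$. In case (a), disjoint transpositions $((ij),(kl))$ form a class of size $2$ with trivial sign product, yielding the commutation relation in \eqref{eq:Vn}, while transpositions sharing a letter form a class of size $3$ with sign $-1$, yielding the three-term relation. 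In case (b) the orbit sizes are identical, but the alternating values of $\chi$ computed from \eqref{eqn:MS} turn the commutator into a proper commutator and split the three-term combination into the two relations of \eqref{eq:Wn}. In case (c), the identity $\sigma\rhd\sigma^{-1}=\sigma^{-1}$ gives a class of size $2$ yielding the first relation of \eqref{eq:U}, while for $\tau\neq\sigma^{\pm 1}$ the triple $(\sigma,\tau,\nu=\sigma\tau\sigma^{-1})$ forms a class of size $3$ yielding the three-term relation.

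The second step is the main obstacle. Let $\widehat{\B}$ denote the quotient of $T(V)$ by the ideal generated by the listed elements. There is a canonical surjection $\widehat{\B}\twoheadrightarrow\B(V)$, so it suffices to establish $\dim\widehat{\B}\leq\dim\B(V)$, the right-hand side being the known value computed in \cite{MS}, \cite{grania}, \cite{AG3}. My plan is to compute a Gr\"obner basis of this quadratic ideal with respect to a suitable degree-lexicographic order on $\{v_x\}_{x\in X}$, producing a finite PBW-type spanning set of $\widehat{\B}$ of the correct cardinality. This is precisely the kind of computation that the package \cite{GBNP} running in \cite{GAP} is designed to perform, and for $n=3$ or for $Y$ it can be verified essentially by hand. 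The hard part is this dimension bound for $n=4,5$: without computer assistance it is not evident that no new defining relation of degree $\geq 4$ appears, and one must lean heavily on the detailed analyses of \cite{MS} and \cite{grania}; accordingly I would invoke those results directly rather than attempt to reprove the bound from scratch.
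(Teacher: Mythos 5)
This theorem is quoted from the literature: the paper offers no proof beyond the citations to \cite[\S 6.4]{MS}, \cite{grania} and \cite[Theorem 6.12]{AG3}, and your proposal ultimately rests on exactly those same sources. Your outline is sound --- the identification of the quadratic part of the ideal follows from the description of $\mJ^2(X,q)$ via the classes in $\mR'$ recalled in \S\ref{subsec:quadratic}, and you correctly isolate the genuinely hard step (that no relations of degree $\geq 3$ beyond those generated by the quadratic ones are needed, equivalently the dimension count) as the part that must be delegated to the cited computations --- so this is essentially the same approach as the paper.
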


Theorem \ref{thm:nichols} also applies if we consider the braided vector spaces $W(q,X)$ associated to these conjugacy classes, see Corollary \ref{cor:nichols}.

\begin{rem}
The definitions of the algebras in items (1) and (2) of Theorem \ref{thm:nichols} make sense for any $n\geq 6$. They are known as Fomin-Kirillov algebras; were introduced in \cite{FK}. It is an open problem to state if they are examples of Nichols algebras and whether they are finite-dimensional or not.
\end{rem}

\section{Lifting via cocycle deformation}\label{sec:liftings}\label{sec:strategy}

Let $H$ be a cosemisimple Hopf algebra and $V\in\ydh$. Let $\{x_1,\dots,x_\theta\}$ be a basis of $V$ and set $\I=\I_\theta$. 
We set $\B(V)=T(V)/\cJ(V)$ and assume that the ideal $\cJ(V)$ is finitely generated. 

We recall the method developed in \cite{AAGMV} to compute the liftings of $V$. We fix a minimal set $\mG$ of homogeneous generators of $\cJ(V)$ and consider an {\it adapted stratification} $\mG =  \mG_0 \cup \mG_1 \cup\dots \cup \mG_{N}$. For $0\le k \le N$, we set
\begin{align*}
\B_0 &:= T(V), &  \cH_0 &= T(V)\#H,
\\
\B_k &:= \B_{k-1} / \langle\Gc_{k-1}\rangle, &
\cH_k &=
\B_k\#H.
\end{align*}

By definition (the image of) $\mG_k$ is a basis of a Yetter-Drinfeld submodule of $P(\B_k)$. Then $\B_k$ is a braided Hopf algebra in $\ydh$ and $\cH_k$ a Hopf algebra. See \cite[5.1]{AAGMV} for details. We set $Y_k\coloneqq\ku\langle\Ss(\mG_k)\rangle\subset \mH_k$.

Let $V=\k\{y_i\}_{i\in \I}$ be another copy of $V$ and let $\cA_0$ denote the algebra $T(V)\#H$. Then $\cA_0$ is a cleft object of $\cH_0$ with coaction induced by $\Delta$ and section $\gamma_0=\id$.
Moreover, $\cL_0:=L(\cA_0,\cH_0)\simeq\cH_0$. Then $\cA_0$ is a $(\cH_0,\cL_0)$-bicleft object; the coaction of $\cL_0$ on $\cA_0$ is $\Delta$.

\smallskip

We fix the singleton $\Lambda_0=\{\mA_0\}\subset \Cleft(\mH_0)$ and we proceed recursively, for each $0\leq k\leq N$, following the steps in \cite[5.2]{AAGMV}. Namely,

\

\noindent
\texttt{Step 1.} We construct a family $\Lambda_{k+1}\subset \Cleft\mH_{k+1}$ \cite[5.2 (1b)]{AAGMV}, starting with $\Lambda_k$.
More precisely, we collect in $\Lambda_{k+1}$ all nonzero quotients 
\begin{align*}
\cA_{k+1}=\cA_{k}/\langle\varphi(Y_k^+)\rangle, \quad \varphi\in\Alg^{\cH_k}(Y_k,\cA_{k}),
\end{align*}
see \cite[Theorem 3.3]{AAGMV}. The coaction is induced by $\Delta_{|V\# H}$ and the section $\gamma_k$ satisfies $\gamma_{k|H}=\id$ \cite[Proposition 5.8 (b)]{AAGMV}.
\smallskip

\

\noindent
\texttt{Step 2.} We compute $L(\mA_{k+1},\mH_{k+1})$ for all $\mA_{k+1}\in\Lambda_{k+1}$ \cite[5.2 (2)]{AAGMV}. 

\

\noindent
\texttt{Step 3.} We check that any lifting of $V$ can be obtained as $L(\mA_{N+1},\mH_{N+1})$, form some $\mA_{N+1}\in\Lambda_{N+1}$  \cite[5.2 (3)]{AAGMV}.

\subsection{On step 1}\label{sec:on step 1}
We further review the algebras $\mA_k$ and the recursion in the first step.
To do this, we pick $k\geq 0$ and fix the following setting:
\begin{itemize}[leftmargin=*]
\item $\mA_k\in\Cleft \mH_k$, with section $\gamma_k:\cH_k\rightarrow\cA_k$.
\item $\Gc_k=\{u_i\}_{i\in\I_n}$ with associated comatrix elements $\{E_{ij}\}_{i,j}$ i.e.~the $H$-coaction on $\Gc_k$ is determined by \begin{align}\label{eqn:comatrix-relations}
u_i\mapsto \sum_j E_{ij}\ot u_j, \qquad i\in\I_n.
\end{align}
\item $Y_k$ is the subalgebra of $\mH_k$ generated by $\Ss(\Gc_k)$.
\item $\varphi_{\bs\lambda}:\ku\Ss(\Gc_k)\longrightarrow\mA_k$, for $\bs\lambda=(\lambda_i)_{i=1}^n\in\ku^n$, is the $\mH_k$-comodule map defined  by
\begin{align}\label{eqn:the form of varphi}
\varphi_{\bs\lambda}(\Ss(u_i))&=\gamma_k(\Ss(u_i))+\sum_{j=1}^n\lambda_j\, \Ss(E_{ij}),\qquad i\in\I_n.
\end{align}

\end{itemize}

By construction, there are projection and inclusion maps $\xymatrix{\mA_k \ar@<0.4ex>@{->}[r]^{\pi}
& H \ar@<0.4ex>@{->}[l]^{\iota=\gamma_k}}$ and thus $\mA_k=\mE_k\#H$ with $\cE_k\simeq \mA_k^{\co H}$. Moreover, $\cE_k$ is an $H$-module algebra \cite[Proposition 5.8 (d)]{AAGMV} and the map $p:\mA_k\to \mE_k$, given by $p(x)=x\_{0}\gamma_k^{-1}\iota\pi(x\_{1})$, defines an $H$-module projection. 
Notice that $\mE_0=T(V)$.

\begin{fed}\label{def:varphi A E}
For each $\bs\lambda=(\lambda_i)_{i=1}^n\in\ku^n$, we define the set
\begin{align*}
\cG_k(\bs\lambda)&=\{ \lambda_i +\gamma_k(\Ss(u_i)) : i\in\I_n \}\subset \mE_k.
\end{align*}
Let $\cI(\bs\lambda)=\lg \cG_k(\bs\lambda)\rg\subset \mE_k$ and set $\mE(\bs\lambda)=\mE_k/\cI(\bs\lambda)$. 
We consider the following conditions:
\begin{align}
\label{eqn:cond1}\mE(\bs\lambda)&\neq 0;\\
\label{eqn:cond2}\ku\cG_k(\bs\lambda)&\subset\mE_k \text{ is an }H\text{-submodule}; \\
\label{eqn:cond3}\varphi_{\bs\lambda}& \text{ extends to an algebra map }Y_k\rightarrow\mA_k.
\end{align}
The $k$th set of deforming parameters is
\begin{align}\label{eqn:Lambda}
\bs\Lambda_k=\bigl\{\bs\lambda\in\ku^n: \eqref{eqn:cond1}, \eqref{eqn:cond2} \text{ and } \eqref{eqn:cond3}\text{ hold}\bigr\}.
\end{align}
Thus $\mE(\bs\lambda)$ is an $H$-module algebra, if $\bs\lambda\in\bs\Lambda_k$. We set  $\mA(\bs\lambda)=\mE(\bs\lambda)\# H$.
\end{fed}

\begin{rem}\label{rem:free}
Assume that the elements of $\Gc_0$ are homogeneous of the same degree. Then $\varphi_{\bs\lambda}:Y_0\longrightarrow\mA_0$ is an algebra map for all $\bs\lambda\in\ku^n$.

Indeed, the subalgebra of $T(V)$ generated by $\Gc_0$ is free by \cite[Lemma 28]{AV2}. Since the antipode is an anti-homomorphism of algebras, $Y_0$ is also free.
\end{rem}

The algebras which we shall collect in $\Lambda_{k+1}$ are precisely the algebras $\mA(\bs\lambda)$, $\bs\lambda\in\bs\Lambda$. Indeed, we have the following.

\begin{lem}
$\mA(\bs\lambda)\in\Cleft(\mH_{k+1})$ for all $\bs\lambda\in\bs\Lambda_k$. If $\mA_{k+1}\in \Lambda_{k+1}$, then there is $\bs\lambda\in\bs\Lambda_k$ such that $\mA_{k+1}=\mA(\bs\lambda)$.
\end{lem}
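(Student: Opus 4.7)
\emph{Plan.} The lemma amounts to unpacking \cite[Theorem 3.3]{AAGMV} in the present setup: that result asserts that nonzero quotients $\mA_k/\langle\varphi(Y_k^+)\rangle$ by colinear algebra maps $\varphi\in\Alg^{\mH_k}(Y_k,\mA_k)$ give, and account for, all cleft objects in $\Lambda_{k+1}$. Accordingly I split the proof into two parts: (i) verify that for $\bs\lambda\in\bs\Lambda_k$ the map $\varphi_{\bs\lambda}$ in \eqref{eqn:the form of varphi} lies in $\Alg^{\mH_k}(Y_k,\mA_k)$ and the corresponding quotient coincides with $\mA(\bs\lambda)$; (ii) show that every $\varphi\in\Alg^{\mH_k}(Y_k,\mA_k)$ producing some $\mA_{k+1}\in\Lambda_{k+1}$ has the form $\varphi_{\bs\lambda}$ for some $\bs\lambda\in\bs\Lambda_k$.

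For (i), the three conditions defining $\bs\Lambda_k$ are tailor-made for the argument. Colinearity of $\varphi_{\bs\lambda}$ on the generators $\Ss(u_i)$ follows by comparing the coaction $\Ss(u_i)\mapsto\sum_j\Ss(E_{ij})\ot\Ss(u_j)$, dual to \eqref{eqn:comatrix-relations}, with its image: naturality of the section $\gamma_k$ handles the $\gamma_k(\Ss(u_i))$-term, while condition \eqref{eqn:cond2} ensures that the linear combination $\sum_j \lambda_j\Ss(E_{ij})$ is a well-defined colinear correction. Condition \eqref{eqn:cond3} then promotes $\varphi_{\bs\lambda}$ to an algebra map on all of $Y_k$. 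Using the smash product decomposition $\mA_k=\mE_k\#H$ and the $H$-module projection $p$ from \S\ref{sec:on step 1}, a direct calculation identifies $\langle\varphi_{\bs\lambda}(Y_k^+)\rangle$ with $\cI(\bs\lambda)\#H$, so the quotient equals $\mE(\bs\lambda)\#H=\mA(\bs\lambda)$, nonzero by \eqref{eqn:cond1}. Invoking \cite[Theorem 3.3]{AAGMV} delivers $\mA(\bs\lambda)\in\Cleft(\mH_{k+1})$.

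For (ii), any $\varphi\in\Alg^{\mH_k}(Y_k,\mA_k)$ is determined by its values on the generators $\Ss(u_i)\in \Ss(\Gc_k)$. Colinearity combined with the comatrix formula \eqref{eqn:comatrix-relations} forces the difference $\varphi(\Ss(u_i))-\gamma_k(\Ss(u_i))$ to lie in the $\k$-span of $\{\Ss(E_{ij})\}_j$, yielding $\varphi=\varphi_{\bs\lambda}$ for a unique $\bs\lambda\in\k^n$. The three conditions defining $\bs\Lambda_k$ then translate respectively into: $\mH_k$-colinearity of $\varphi$ (giving \eqref{eqn:cond2} upon projecting to $\mE_k$), $\varphi$ being an algebra map (giving \eqref{eqn:cond3}), and the hypothesis $\mA_{k+1}\neq 0$ from $\mA_{k+1}\in\Lambda_{k+1}$ (giving \eqref{eqn:cond1}). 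Hence $\bs\lambda\in\bs\Lambda_k$ and, by the identification in (i), $\mA_{k+1}=\mA(\bs\lambda)$.

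The main obstacle will be the ideal identification in (i). Because $\gamma_k(\Ss(u_i))$ is not $H$-coinvariant, its image in $\mA_k$ has a nontrivial $H$-part, and only after the correction by $\sum_j\lambda_j\Ss(E_{ij})$ does the sum project cleanly onto $\lambda_i+\gamma_k(\Ss(u_i))\in\mE_k$ as in Definition \ref{def:varphi A E}. Making this compatibility precise, so that the two-sided ideal inside $\mA_k$ matches $\cI(\bs\lambda)\#H$ with ideal of $\mE(\bs\lambda)$ exactly, is the technical heart of the argument; everything else follows formally from the general framework of \cite{AAGMV}.
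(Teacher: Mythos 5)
Your proposal follows essentially the same route as the paper: the forward direction uses the three conditions defining $\bs\Lambda_k$ together with the $H$-linear projection $p$ to identify $\langle\varphi_{\bs\lambda}(Y_k^+)\rangle$ with $\cI(\bs\lambda)\#H$ and then invokes \cite[Theorem 3.3]{AAGMV}, while the converse rests on the fact that any colinear algebra map on $Y_k$ restricts on $\Ss(\Gc_k)$ to some $\varphi_{\bs\lambda}$ — a fact the paper simply cites as \cite[Lemma 5.9 (a)]{AAGMV} (and \cite[Remark 5.6 (d)]{AAGMV} for the ideal identification) rather than re-deriving. The only loose phrase is attributing \eqref{eqn:cond2} in the converse to colinearity alone (it is an $H$-action condition, recovered via $p$ and the nontriviality of the quotient), but this point is likewise absorbed into the citations in the paper's own proof and does not alter the argument.
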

\pf
Set $s_i\coloneqq\Ss(u_i)=-\sum_{j=1}^n\Ss(E_{ij})u_j$, $i\in\I_n$. Using the fact that $\gamma_{k\mid H}^{-1}=\Ss_H$ and $\ad_{\mid H}$ coincides with the action of $H$ on $\B_k$ inside $\ydh$, it follows that
\begin{align}\label{eqn:yi}
p\varphi_{\bs\lambda}(s_i)=\lambda_i-\sum_{l=1}^n\gamma_k(\Ss(E_{il})\cdot u_l)=\lambda_i+\gamma_k(\Ss(u_i))\in\Gc_k(\bs\lambda).
\end{align} 
Then $p$ induces a linear isomorphism $\ku\varphi_{\bs\lambda}(\Gc_k)\longrightarrow\ku\Gc_k(\bs\lambda)$. In particular, $\ku\varphi_{\bs\lambda}(\Gc_k)$ is an $H$-submodule of $\mA_k$. Hence $\langle\varphi_{\bs\lambda}(Y_k^+)\rangle=\cI(\bs\lambda)\#H$ by \cite[Remark 5.6 (d)]{AAGMV} and therefore $\mA(\bs\lambda)=\mA_k/\langle\varphi_{\bs\lambda}(Y_k^+)\rangle\simeq\mE(\bs\lambda)\#H\neq0$. By \cite[Theorem 3.3]{AAGMV} $\mA(\bs\lambda)\in\Cleft(\mH_{k+1})$ because $\varphi_{\bs\lambda}\in\Alg^{\cH_k}(Y_k,\cA_{k})$.

Now, let $\mA_{k+1}\in \Lambda_{k+1}$ and recall that $\mA_{k+1}=\mA_k/\langle\varphi(Y_k^+)\rangle$ for some $\varphi\in\Alg^{\cH_k}(Y_k,\cA_{k-1})$, see \cite[\S 5.2 (b)]{AAGMV}. Hence, \cite[Lemma 5.9 (a)]{AAGMV} states that for such $\varphi$ there is $\bs\lambda\in\ku^n$ such that $\varphi_{|\Ss(\Gc_k)}=\varphi_{\bs\lambda}$
and thus $\mA_{k+1}=\mA_k/\langle\varphi_{\bs\lambda}(s_i): i\in\I_n\rangle\simeq \mA(\bs\lambda)$.
\epf

Under a feasible assumption, we can remove the antipode from the relations defining $\mE(\bs\lambda)$.

\begin{lem}\label{le:gen of rel of Ek+1 mejorado}
Let $\bs\lambda\in\bs\Lambda_k$. If $\gamma_k$ is a morphism of $H$-modules, then
\begin{align*}
\mE(\bs\lambda)\simeq \mE_k(\bs\lambda)/\langle \lambda_i-\gamma_k(u_i):i\in\I_n\rangle.
\end{align*}
\end{lem}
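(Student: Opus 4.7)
The plan is to verify that the ideal $\cI(\bs\lambda)=\langle\cG_k(\bs\lambda)\rangle$, generated by the elements $\lambda_i+\gamma_k(\Ss(u_i))$, coincides with the candidate ideal $\cI'=\langle\lambda_i-\gamma_k(u_i):i\in\I_n\rangle$ in $\mE_k$. Since $\mE_k$ is an $H$-module algebra and the hypothesis that $\gamma_k$ is $H$-linear forces both generating sets to span $H$-submodules of $\mE_k$, each of the two ideals is automatically $H$-stable. Thus it will suffice to exhibit each generator of one set as an $H$-translate of the generators of the other, via concrete identities involving the comatrix elements $E_{ij}$.

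First I would invoke the identity $\Ss(u_i)=-\sum_{j=1}^n \Ss(E_{ij})u_j$ in $\mH_k$, already used in the proof of the previous lemma; it follows from primitivity of $u_i\in\B_k$, the coaction $u_i\mapsto \sum_j E_{ij}\otimes u_j$, and the antipode axiom applied to $u_i$ inside $\mH_k$. Applying $\gamma_k$ and using its $H$-linearity (with $\Ss(E_{ij})\in H$ acting on $\mA_k$ by left multiplication through $\iota$) yields
\[
\gamma_k(\Ss(u_i))=-\sum_{j=1}^n \Ss(E_{ij})\cdot \gamma_k(u_j),
\]
so that $\lambda_i+\gamma_k(\Ss(u_i))=\lambda_i-\sum_j \Ss(E_{ij})\cdot \gamma_k(u_j)$ in $\mE_k$.

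Next I would exploit the antipode axiom $\sum_i E_{li}\Ss(E_{ij})=\delta_{lj}1_H$, the counit identity $\varepsilon(E_{ij})=\delta_{ij}$, and the fact that $h\cdot 1_{\mE_k}=\varepsilon(h)1_{\mE_k}$. Acting by $E_{li}$ on each generator $\lambda_i+\gamma_k(\Ss(u_i))$ of $\cI(\bs\lambda)$ and summing over $i$ recovers $\lambda_l-\gamma_k(u_l)$, which gives $\cI'\subseteq \cI(\bs\lambda)$. Conversely, acting by $\Ss(E_{ij})$ on the proposed generators $\lambda_j-\gamma_k(u_j)$ of $\cI'$ and summing over $j$ produces $\lambda_i+\gamma_k(\Ss(u_i))$, which gives the reverse inclusion. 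The two ideals therefore coincide, and the lemma follows.

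The main obstacle I expect is pinning down which $H$-module structure on $\mH_k$ and $\mA_k$ is compatible with the hypothesis ``$\gamma_k$ is a morphism of $H$-modules'' and with the $H$-action on $\mE_k$ making it an $H$-module algebra. Once one fixes left multiplication by $\iota(H)$ in $\mA_k$ and verifies that its restriction to $\mE_k$ via $p$ recovers the $H$-module algebra structure, so that in particular $H$ acts on $1_{\mE_k}$ through the counit, the computation above is routine.
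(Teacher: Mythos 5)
Your computational core is exactly the paper's own proof: both arguments reduce the lemma to showing that the span of $\{\lambda_i+\gamma_k(\Ss(u_i))\}_{i\in\I_n}$ and the span of $\{\lambda_i-\gamma_k(u_i)\}_{i\in\I_n}$ generate the same ideal of $\mE_k$, and both do so by the same two computations, namely $\sum_i E_{li}\cdot\bigl(\lambda_i+\gamma_k(\Ss(u_i))\bigr)=\lambda_l-\gamma_k(u_l)$ and $\sum_j\Ss(E_{ij})\cdot\bigl(\lambda_j-\gamma_k(u_j)\bigr)=\lambda_i+\gamma_k(\Ss(u_i))$, using $\Ss(u_i)=-\sum_j\Ss(E_{ij})u_j$, the counit/antipode identities for the comatrix elements, the $H$-linearity of $\gamma_k$, and $h\cdot 1=\varepsilon(h)1$.

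The one genuine problem is your identification of the module structure: it is \emph{not} left multiplication by $\iota(H)$ on $\mA_k$. Left multiplication by $\iota(h)$ does not preserve $\mE_k\simeq\mA_k^{\co H}$, does not act on $1$ through the counit (it sends $1\mapsto\iota(h)$), and $\gamma_k$ is not a module map for it; taken literally, your two displayed computations then fail (e.g.\ $E_{li}\cdot\lambda_i$ would be $\lambda_i\,\iota(E_{li})$ rather than $\varepsilon(E_{li})\lambda_i$, and $\gamma_k(\Ss(E_{ij})u_j)\neq\iota(\Ss(E_{ij}))\gamma_k(u_j)$ in general). The action for which \cite[Proposition 5.8 (c)]{AAGMV} gives $H$-linearity of $\gamma_k$, and which the hypothesis of the lemma refers to, is the adjoint action $h\cdot a=\iota(h_{(1)})\,a\,\iota(\Ss(h_{(2)}))$ on $\mA_k$, the counterpart of $\ad_{|H}$ on $\mH_k$; its restriction to $\mE_k$ is precisely the $H$-module algebra action, and the projection $p$ is $H$-linear for it. Once this is fixed, and once one also notes (as the paper does via \cite[Lemma 4.1]{AGI}) that $H$-linearity of $\gamma_k$ guarantees $\gamma_k(u_i)\in\mE_k$, so that the elements $\lambda_i-\gamma_k(u_i)$ really lie in $\mE_k$, your argument coincides with the paper's: the two spans agree as $H$-submodules, both are $H$-stable, and since $\mE_k$ is an $H$-module algebra the ideals they generate coincide, which is the assertion of the lemma.
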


\begin{rem}
The section $\gamma_k$ is $H$-linear when
\begin{enumerate}
\item $H$ semisimple, equivalently $\dim H<\infty$, or
\item $k=0$.
\end{enumerate}
Indeed, (1) is \cite[Proposition 5.8 (c)]{AAGMV} and (2) holds since $\gamma_0=\id$.
\end{rem}

\begin{proof}
It follows from the $H$-linearity of $\gamma_k$ that $\gamma_k(u_i)\in\mE_k$, $i\in\I_n$, arguing as in \cite[Lemma 4.1]{AGI}. 
It is enough to see that $\ku\Gc_k(\bs\lambda)=\ku\{\lambda_i-\gamma_k(u_i)\}_{i\in\I_n}$ as $H$-submodules by \cite[Remark 5.6 (b)]{AAGMV}.

Let $z_i\coloneqq p\varphi_{\bs\lambda}(s_i)$ for $i\in\I_n$, recall \eqref{eqn:yi}. If $s\in\I_n$, then
\begin{align*}
\sum_{i}E_{s,i}\cdot z_i&=\sum_{i}E_{s,i}\cdot \lambda_i-\sum_{i,l}\bigl(E_{s,i}\Ss(E_{il})\bigr)\cdot\gamma_k(u_l)\\
&=\sum_{i}\varepsilon(E_{s,i}) \lambda_i-\sum_{l}\varepsilon(E_{s,l})\gamma_k(u_l)\\
&=\lambda_s-\gamma_k(u_s).
\end{align*}
Reciprocally, $z_i=\sum_{l}\Ss(E_{i,l})\cdot(\lambda_l-\gamma_k(u_l))$ for all $i\in\I_n$.
\end{proof}

\subsection{On step 2}\label{sec:on step 2}
By construction, $L(\mA_{k+1},\mH_{k+1})$ is a cocycle deformation of $\mH_{k+1}$. Moreover, if $\F=(F_n)_{n\geq 0}$ is the filtration on $L(\mA_{k+1},\mH_{k+1})$ induced by the graduation of $\mH_{k+1}$, then the associated graded Hopf algebra $\gr_{\F}L(\mA_{k+1},\mH_{k+1})$  is isomorphic to $\mH_{k+1}$ \cite[Proposition 4.14 (c)]{AAGMV}; hence $L(\mA_{N+1},\mH_{N+1})$ is a lifting of $V$. Thus, the set of deforming parameters
\[
\bs\Lambda=\bs\Lambda_0\times \dots\times \bs\Lambda_N
\]
parametrizes a family of liftings $\mH(\bs\lambda)$ of $V$, $\bs\lambda=(\bs\lambda_0,\dots,\bs\lambda_N)\in\bs\Lambda$.

The Hopf algebras $\mL_1(\bs\lambda)=L(\mA_1(\bs\lambda),\mH_1)$, $\bs\lambda\in\bs\Lambda_0$, obtained after the first recursion are easy to describe. We set
\begin{align*}
\mH_1(\bs\lambda)=T(V)\#H/\langle u_i- \lambda_i+\sum_{\mathclap{j\in\I_n}}\lambda_jE_{ij}: i\in\I_n \rg.
\end{align*}

The next lemma is a particular case of \cite[Proposition 5.10]{AAGMV}. 

\begin{lem}\label{le:L1}
Let $\bs\lambda\in\bs\Lambda$. Then
\begin{align*}
L(\mA_1(\bs\lambda),\mH_1)\simeq\mH_1(\bs\lambda).
\end{align*}
\end{lem}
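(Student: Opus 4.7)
The plan is to specialize the general description of the Schauenburg left Hopf algebra from \cite[Proposition 5.10]{AAGMV} to the case $k=0$, where the choice $\gamma_0=\id$ makes every ingredient transparent. First I would exhibit $\mH_1(\bs\lambda)$ as a genuine Hopf algebra, by checking that the ideal $\mJ_{\bs\lambda}:=\langle u_i-\lambda_i+\sum_j\lambda_j E_{ij}:i\in\I_n\rangle\subset T(V)\#H$ is a Hopf ideal. The key point is that in the bosonization $T(V)\#H$, each $u_i$ is primitive in $\B_0=T(V)$ (viewed in $\ydh$) and has $H$-coaction $u_i\mapsto \sum_j E_{ij}\ot u_j$ by \eqref{eqn:comatrix-relations}, so its total coproduct is $\D(u_i)=u_i\ot 1+\sum_j E_{ij}\ot u_j$. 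Using $\D(E_{ij})=\sum_k E_{ik}\ot E_{kj}$ and $\e(E_{ij})=\delta_{ij}$, a direct rearrangement gives $\D(z_i)=z_i\ot 1+\sum_k E_{ik}\ot z_k$ and $\e(z_i)=0$ for $z_i:=u_i-\lambda_i+\sum_j\lambda_j E_{ij}$; hence $\mJ_{\bs\lambda}$ is a Hopf ideal.

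Next I would transfer the left $\mL_0$-coaction $\D:\mA_0\to\mA_0\ot\mA_0$ to a left $\mH_1(\bs\lambda)$-coaction on $\mA_1(\bs\lambda)=\mE(\bs\lambda)\#H$, by post-composing with the projections $\pi:\mL_0\to\mH_1(\bs\lambda)$ and $p:\mA_0\to\mA_1(\bs\lambda)$. The essential verification is that $(\pi\ot p)\D$ vanishes on the defining ideal $\langle\varphi_{\bs\lambda}(\Ss(u_i)):i\in\I_n\rangle$ of $\mA_1(\bs\lambda)$. Using $\gamma_0=\id$, \eqref{eqn:yi}, the primitivity of $u_i$ and the antipode identity $\sum_k E_{ik}\Ss(E_{kj})=\delta_{ij}$, a computation shows that $(\pi\ot p)\D(\varphi_{\bs\lambda}(\Ss(u_i)))$ collapses to the relation $\pi(u_i)=\lambda_i-\sum_j\lambda_j E_{ij}$, which holds in $\mH_1(\bs\lambda)$ by definition. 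Hence the coaction descends.

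With the left coaction in place, I would check that $\mA_1(\bs\lambda)$ is $(\mH_1(\bs\lambda),\mH_1)$-bicleft: the right $\mH_1$-cleft structure is supplied by the construction of $\Lambda_1$, while the left section is transported from the identity section on $\mA_0=\mL_0$, which is legitimate because $\gamma_0$ is $H$-linear and the defining ideals are compatible on both sides. Schauenburg's universal property then gives a Hopf algebra surjection $L(\mA_1(\bs\lambda),\mH_1)\twoheadrightarrow\mH_1(\bs\lambda)$, and the coradical-filtration argument of \cite[Proposition 4.14 (c)]{AAGMV} shows that both sides have the same associated graded Hopf algebra $\mH_1$, forcing this surjection to be an isomorphism. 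The main obstacle I anticipate is the descent check: tracking how the primitivity of the $u_i$ in $\B_0$, the comatrix coaction, and the antipode relations for the $E_{ij}$ conspire to make $(\pi\ot p)\D$ factor through $\mA_1(\bs\lambda)$. Once this compatibility is verified, the statement is a direct specialization of \cite[Proposition 5.10]{AAGMV}.
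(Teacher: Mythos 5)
Your argument is essentially correct, but it takes a genuinely different route from the paper. The paper does not construct the left comodule structure or the bicleft object by hand: it invokes \cite[Proposition 5.10]{AAGMV}, which already gives a presentation $L(\mA_1(\bs\lambda),\mH_1)\simeq \mL_0/\langle \widetilde s_i - \lambda_i+\sum_j\lambda_j\Ss(E_{ij})\rangle$ in terms of certain elements $\widetilde s_i\in\mL_0$ defined by an implicit formula involving $\gamma_0$ and the coaction. The entire content of the paper's proof is the computation showing $\widetilde s_i=\Ss(u_i)$ (using $\gamma_0=\id$, $\gamma_0^{-1}=\Ss$ and $\Delta(\Ss(u_i))$), after which the stated presentation of $\mH_1(\bs\lambda)$ drops out. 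Your route instead re-derives the conclusion from first principles: you verify that the $z_i=u_i-\lambda_i+\sum_j\lambda_jE_{ij}$ are $(1,E)$-skew-primitive so that $\mJ_{\bs\lambda}$ is a Hopf ideal, check that $(\pi\ot p)\Delta$ kills the generators $\varphi_{\bs\lambda}(\Ss(u_i))$ (both computations are correct -- the cancellation via $\sum_kE_{ik}\Ss(E_{kj})=\delta_{ij}$ works exactly as you say), and then identify $\mH_1(\bs\lambda)$ with the Schauenburg Hopf algebra. What your approach buys is transparency about why the formula for $\mH_1(\bs\lambda)$ is the right one; what the paper's approach buys is brevity and, more importantly, that it sidesteps your weakest step: the claim that the universal property yields a \emph{surjection} $L(\mA_1(\bs\lambda),\mH_1)\twoheadrightarrow\mH_1(\bs\lambda)$ which the filtration argument upgrades to an isomorphism. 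The universal map need not be surjective a priori, and knowing $\gr_{\F}\mH_1(\bs\lambda)=\mH_1$ independently of the identification with $L(\mA_1(\bs\lambda),\mH_1)$ requires an extra argument (e.g.\ that $\mH_1(\bs\lambda)$ is generated in filtration degree one and that the graded comparison map is the identity on generators), so you would either have to prove that $\mA_1(\bs\lambda)$ is left $\mH_1(\bs\lambda)$-Galois, or flesh out the generation-in-degree-one argument, to close the proof along your lines.
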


\begin{proof}
Set $\mL_1=\mL_1(\bs\lambda)$ and let $s_i=\Ss(u_i)$, $i\in\I_n$. 
By \cite[Proposition 5.10]{AAGMV}, we have that
\begin{align*}
\cL_1\simeq\mL_{0}/\langle \widetilde s_i -
c_i+\sum_{j=1}^nc_j \Ss(E_{ij})\rangle_{1\leq i \leq n}
\end{align*}
where $\widetilde s_i\in\mL_{k}$ $1\leq i \leq n$, is such that
\begin{align*}
 \widetilde s_i\ot
1_{\cA_0}=\sum_{t}\gamma_0(s_t)\_{-1}\ot \gamma_0(s_t)\_{0}\gamma_0^{-1}\Ss(E_{it}
)+1\ot\gamma_0^{-1}(s_i).
\end{align*}
This formula is simplified by the fact that $\gamma_0=\id$, $\gamma^{-1}_0=\Ss$ and the coaction is the comultiplication in $\cL_0$.  Notice that, in particular,
\[ \Delta(s_i)=\sum_{j=1}^ns_j\ot\Ss(E_{ij})+1\ot s_i, \qquad i\in\I_n.\] 
Hence, 
\begin{align*}
\sum_{t}(s_t)\_{-1}&\ot (s_t)\_{0}\Ss^2(E_{it})+1\ot\Ss(s_i)=\\
=&\sum_{j,t}s_j\ot\Ss(E_{tj})\Ss^2(E_{it})+\sum_{t}1\ot s_t\Ss^2(E_{it})+1\ot\Ss(s_i)\\
=&\sum_{j}s_j\ot\varepsilon\Ss(E_{ij})+\sum_{t,j}-1\ot \Ss(E_{tj})u_j\Ss^2(E_{it})+1\ot\Ss(s_i)\\
=&s_i\ot1+1\ot\left(\Ss^2(u_i)-\sum_{j}\ad\Ss(E_{ij})(u_j)\right).
\end{align*}
Then, the second summand has to be zero and $\widetilde s_i=s_i=\Ss(u_i)$. 
\end{proof}

\subsection{On step 3}\label{sec:on step 3}

We give a sufficient condition to find all the quadratic liftings via cocycle deformation.

\begin{lem}\label{thm:liftings-step-3} 
Assume that $\ku\Gc=\mJ^2(V)$ and let $L$ be a lifting of $V$. 
If $\homh(\mJ^2(V),V)=0$, then there exists $\bs\lambda\in\bs\Lambda$ such that $L\simeq\mH(\bs\lambda)$.
\end{lem}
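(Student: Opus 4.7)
Since $\ku\Gc = \mJ^2(V)$, all generators of $\mJ(V)$ lie in degree $2$, so $\Gc = \Gc_0$, $N=0$, and $\bs\Lambda = \bs\Lambda_0$. By Lemma~\ref{le:L1}, the algebras $\mH(\bs\lambda)$ are explicitly
\[
\mH_1(\bs\lambda) = T(V)\#H\,\big/\,\big\langle u_i - \lambda_i + \textstyle\sum_j \lambda_j E_{ij}: i\in\I_n\big\rangle.
\]
Let $\phi: T(V)\#H \to L$ be the lifting map from~\eqref{eq:properties of A and phi}. The plan is to produce $\bs\lambda=(\lambda_i)\in\bs\Lambda_0$ such that $\phi(u_i) = \lambda_i - \sum_j \lambda_j E_{ij}$ in $H$ for every $i$, so that $\phi$ factors as a surjection $\mH(\bs\lambda)\twoheadrightarrow L$, which is an isomorphism by comparing associated graded algebras (both $\simeq\B(V)\#H$).

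The first key step is to show $\phi(u_i)\in H$. Since $u_i\in\mJ^2(V)$ vanishes in degree $2$ of $\gr L\simeq\B(V)\#H$, one has $\phi(u_i)\in L_{(1)}$. The assignment $u_i\mapsto\phi(u_i)\bmod L_{(0)}$ induces a morphism $f\colon\mJ^2(V)\to\gr^1 L\simeq V\#H$ in $\ydh$ via the $H$-bi(co)linearity of $\phi$. Using the YD-structure of $V\#H$, I would extract a YD-morphism $\mJ^2(V)\to V$ from $f$; the hypothesis $\homh(\mJ^2(V),V)=0$ then forces it to vanish, whence $f=0$ and $\phi(u_i)\in H$. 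The second step uses that $u_i\in\ker\varsigma_2 = P^2(T(V))$ is primitive in the braided Hopf algebra $T(V)\in\ydh$: in the bosonization $T(V)\#H$, one has $\Delta(u_i) = u_i\ot 1 + \sum_j E_{ij}\ot u_j$ with $E_{ij}$ the comatrix entries of the coaction~\eqref{eqn:comatrix-relations}. Applying $\phi$ yields in $H\ot H$ the equation
\[
\Delta\phi(u_i) = \phi(u_i)\ot 1 + \sum\nolimits_j E_{ij}\ot\phi(u_j),
\]
a ``twisted cocycle'' relation. Cosemisimplicity of $H$ (hence $\ext^1$-vanishing for $H$-comodules) should guarantee that every solution is inner, i.e.~$\phi(u_i)=\lambda_i-\sum_j\lambda_j E_{ij}$ for some scalars $\lambda_i$, obtained by decomposing $M=\ku\Gc_0$ into simple $H$-comodules and solving componentwise.

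To finish I would verify $\bs\lambda\in\bs\Lambda_0$: condition~\eqref{eqn:cond3} is automatic by Remark~\ref{rem:free}, \eqref{eqn:cond2} follows from $H$-equivariance, and \eqref{eqn:cond1} holds because $L\neq 0$ is a quotient of $\mH_1(\bs\lambda)$. The induced surjection $\mH(\bs\lambda)\twoheadrightarrow L$ is then an isomorphism as noted above. The main obstacle lies in the two middle steps: carefully setting up the YD-morphism into $V$ from the coradical-filtration projection $f$ (where the hypothesis $\homh(\mJ^2(V),V)=0$ enters), and converting the twisted-cocycle equation in $H\ot H$ into the explicit inner form. Both make essential use of cosemisimplicity of $H$---for $f$, via a splitting of the coradical filtration, and directly for the cocycle argument.
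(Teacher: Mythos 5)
Your plan has the same skeleton as the paper's proof, which simply outsources your two ``middle steps'' to \cite{AAGMV}: there, $M=\mJ^2(V)$ is \emph{compatible} with the lifting map $\phi$, \cite[Lemma 4.8]{AAGMV} turns the hypothesis $\homh(\mJ^2(V),V)=0$ into $\pi_1\circ\phi_{|M}=0$ (this is exactly your extraction of a Yetter--Drinfeld morphism $\mJ^2(V)\to V$ from the Hopf-bimodule $\gr^1L$), and \cite[Lemma 4.8 (c), Theorem 4.11]{AAGMV} together with Lemma \ref{le:L1} give $L\simeq\mL_1(\bs\lambda)\simeq\mH_1(\bs\lambda)$ with $\bs\lambda\in\bs\Lambda$. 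Your two flagged steps do go through; for the second you do not even need coseparability or Hochschild cohomology: since $\e(\phi(u_i))=0$, the relation $\Delta\phi(u_i)=\phi(u_i)\ot1+\sum_j E_{ij}\ot\phi(u_j)$ says precisely that the $\phi(u_i)$ are the obstruction cochain of an extension of the comodule $\ku\Gc_0$ by the trivial comodule, which splits because $\CoRep H$ is semisimple, and a splitting is exactly a family $\bs\lambda$ with $\phi(u_i)=\lambda_i-\sum_j\lambda_jE_{ij}$.

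The genuine gap is in your verification that $\bs\lambda\in\bs\Lambda$. Condition \eqref{eqn:cond1} is $\mE(\bs\lambda)\simeq T(V)/\langle u_i-\lambda_i : i\in\I_n\rangle\neq0$, and this does \emph{not} follow from ``$L\neq0$ is a quotient of $\mH_1(\bs\lambda)$'': $\mE(\bs\lambda)$ is a quotient of $T(V)$, not of $\mH_1(\bs\lambda)$, and its nonvanishing is an independent, genuinely nontrivial condition --- in this very paper it is settled for the targeted examples by Gr\"obner-basis computations (Lemma \ref{lem:nonzero}, Proposition \ref{pro:nonzero}). This is exactly what the appeal to \cite[Theorem 4.11]{AAGMV} buys in the paper's route: the nonzero cleft object $\mA_1(\bs\lambda)$ is manufactured from $L$ itself, so \eqref{eqn:cond1} comes for free, together with the cocycle-deformation statement. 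Two smaller points: \eqref{eqn:cond2} is not just ``$H$-equivariance'' --- the splitting scalars are only determined up to the trivial-isotypic ambiguity, and one must show they can be chosen so that $u_i\mapsto\lambda_i$ is an $H$-module map, i.e.\ \eqref{eqn:lambda1}; this uses the adjoint-equivariance of $\phi$ and is the content of \cite[Lemma 4.8 (c)]{AAGMV}. And your final ``comparison of associated graded algebras'' should be argued via maximality of $\cJ(V)$: the induced graded surjection $\B(V)\#H\to\B(V)\#H$ is bijective in degrees $\le1$, so its kernel is a graded Hopf ideal concentrated in degrees $\ge2$ and must vanish; surjectivity alone does not suffice when $\B(V)$ or $H$ is infinite-dimensional.
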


\begin{rem}
In the setting of Lemma \ref{thm:liftings-step-3}, $\bs\Lambda=\bs\Lambda_0$ and $\mH(\bs\lambda)=\mH_1(\bs\lambda)$. 
\end{rem}

\pf
Let $\phi:T(V)\# H\twoheadrightarrow L$ be a lifting map. 
Now, $M=\mJ^2(V)$ is compatible with $\phi$ \cite[Definition 4.7]{AAGMV} since $\mJ^2(V)\subset P(T(V))$. Following \cite[Lemma 4.8]{AAGMV}, $\pi_1\circ\phi_{|M}=0$ by our hypothesis. Then there are scalars $\bs\lambda=\{\lambda_i\}_{i=1}^n$ such that $L\simeq \mL_1(\bs\lambda)$ by \cite[Lemma 4.8 (c) and Theorem 4.11]{AAGMV}. Hence the lemma follows from Lemma \ref{le:L1}.
\epf

\begin{rem}
The space $\homh(\mJ(V),V)$, for $V$ a braided vector space of diagonal type, has been studied in \cite{AKM} in connection with braided deformations of $\B(V)$.
\end{rem}

\begin{rem}
In the setting of Lemma \ref{thm:liftings-step-3}, we see every lifting is determined by a family $\bs\lambda\in\bs\Lambda$. However, two different families $\bs\lambda$ and $\bs\lambda'$ may define the same, or isomorphic, liftings. 

For each $V,H$, we study the symmetries of the set $\bs\Lambda$ to describe the complete list of non-isomorphic liftings, see e.g.~\cite[Lemma 4.8 (d)]{AAGMV}. 
\end{rem}

\section{Realizations}\label{sec:realizations}
We fix a rack and 2-cocycle $(X,q)$. We investigate some of the necessary conditions on a Hopf algebra $H$ so that we can realize the braided vector spaces  $V(X,q)$ and $W(q,X)$ in ~\S 
\ref{sec:racks-bvs}.

\subsection{Realizations of $V(X,q)$}

We shall study the class of Hopf algebras $H$ with a realization $V(X,q)\in\ydh$. 

\begin{exa}\label{exa:enveloping}
If $H=\k G$, $G$ a finite group, a principal realization of $V(X,q)$ over $H$ \cite[Definition 3.2]{AG2} is the data $(\cdot, g, (\chi_i)_{i\in X})$ given by:
\begin{itemize}[leftmargin=*]
\item $\cdot:G\times X\to X$ is an action of $G$ on $X$; 
\item $g:X\to G$ is a function such that $g(h\cdot i) = hg(i)h^{-1}$
and $g(i)\cdot j=i\rhd j$; 
\item $\chi_i:G\to\k^\times$ satisfies $\chi_i(g(j))=q_{ji}$, $i,j\in X$, and the family $(\chi_i)_{i\in X}$ is a 1-cocycle, i.e.~$\chi_i(ht)=\chi_i(t)\chi_{t\cdot i}(h)$, for all $i\in X$, $h,t\in
G$.
\end{itemize}
This data defines a Yetter-Drinfeld structure on $V(X,q)$ by 
\[
v_x\mapsto g_x\ot v_x, \qquad g\cdot v_x=\chi_x(g)v_{g\cdot x}; \quad x\in X, g\in G.
\]
The realization is said to be \emph{faithful} if $g$ is injective, which is always the case when $X$ is faithful \cite[Lemma 3.3]{AG1}. 

Notice that $V(X,q)$ clearly admits a natural principal realization over the enveloping groups $G_X$ and $F_X$, see \S \ref{sec:racks}.

\end{exa}

Let $H$ be a Hopf algebra. Assume that $V=V(X,q)$ is an $H$-module and an $H$-comodule: that is there are matrix coefficients $\{\mu_{xy}\}_{x,y\in X}\subset H^*$ and comatrix elements $\{e_{xy}\}_{x,y\in X}\subset H$ such that the action and coaction are determined respectively by
\begin{align}\label{eqn:action-coaction}
h\cdot v_x&=\sum_{y\in X} \mu_{xy}(h) v_y, & \lambda(v_x)&=\sum_{y\in X}e_{xy}\ot v_y.
\end{align}

\begin{lem}\label{lem:realization}
Equations \eqref{eqn:action-coaction} define a realization of $V$ over $H$ if and only if 
\begin{align}\label{eqn:yd-cond}
\sum_{y\in X} \mu_{xy}(h_{(1)}) e_{yz}h_{(2)}=\sum_{y\in X} \mu_{yz}(h_{(2)}) h_{(1)}  e_{xy}, \qquad \forall\, x,z\in X, h\in H.
\end{align}
\end{lem}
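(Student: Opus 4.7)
The plan is to recognize \eqref{eqn:yd-cond} as the Yetter--Drinfeld compatibility axiom transcribed in the basis $\{v_x\}_{x\in X}$. Recall that a left $H$-module and left $H$-comodule $V$ lies in $\ydh$ if and only if, for all $h\in H$ and $v\in V$,
\[
\sum h_{(1)}v_{(-1)}\otimes h_{(2)}\cdot v_{(0)} \,=\, \sum (h_{(1)}\cdot v)_{(-1)}\,h_{(2)}\otimes (h_{(1)}\cdot v)_{(0)};
\]
this is the standard antipode-free form of the axiom, and I would adopt it as the working criterion.

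First I would observe that, by $\k$-linearity in $v$, it suffices to check the compatibility on basis elements $v=v_x$, $x\in X$. Using \eqref{eqn:action-coaction} I would expand the left-hand side as
\[
\sum_{y\in X}h_{(1)}e_{xy}\otimes h_{(2)}\cdot v_y \,=\, \sum_{y,z\in X}\mu_{yz}(h_{(2)})\,h_{(1)}e_{xy}\otimes v_z,
\]
pulling the scalar $\mu_{yz}(h_{(2)})\in\k$ to the front, and the right-hand side as
\[
\sum_{y\in X}\mu_{xy}(h_{(1)})\,\lambda(v_y)\,h_{(2)} \,=\, \sum_{y,z\in X}\mu_{xy}(h_{(1)})\,e_{yz}h_{(2)}\otimes v_z.
\]
Since $\{v_z\}_{z\in X}$ is a $\k$-basis of $V$, equality in $H\otimes V$ is equivalent to equality of coefficients of each $v_z$, which yields exactly \eqref{eqn:yd-cond} (after noting once more that the scalars $\mu_{yz}(h_{(2)})$ and $\mu_{xy}(h_{(1)})$ commute through elements of $H$). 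Conversely, if \eqref{eqn:yd-cond} holds for all $x,z\in X$ and $h\in H$, then summing against $v_z$ recovers the YD axiom on every basis vector, hence on all of $V$.

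I do not anticipate any real obstacle: the argument is a direct unraveling of the YD axiom once the action and coaction are written in coordinates. The only delicate point is choosing the antipode-free form of the compatibility so that the identity lives in $H\otimes V$ and produces \eqref{eqn:yd-cond} cleanly; the rest is a routine comparison of coefficients in the fixed basis $\{v_x\}_{x\in X}$.
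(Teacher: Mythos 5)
Your proof is correct and follows exactly the route the paper intends: the paper's own proof consists of the single sentence ``This is a translation of the Yetter--Drinfeld compatibility,'' and your argument simply carries out that translation explicitly, expanding the antipode-free form of the YD axiom on the basis $\{v_x\}_{x\in X}$ and comparing coefficients of $v_z$ in $H\otimes V$. No issues.
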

\pf
This is a translation of the Yetter-Drinfeld compatibility.
\epf
We shall fix
\begin{align}\label{eqn:subalgebra}
K\coloneqq \text{ Hopf subalgebra of }H\text{ generated by }\{e_{xy}\}_{x,y\in X}.  
\end{align}

\begin{rem}\label{rem:ppal}
In the context of Lemma \ref{lem:realization}, assume that the realization $V\in \ydh$ is principal. That is, $e_{xy}=\delta_{x,y}g_{x}$, with $g_x\in G\coloneqq G(H)$. Then \eqref{eqn:yd-cond} becomes: 
\[
\mu_{xy}(h_{(1)}) g_y h_{(2)}= \mu_{xy}(h_{(2)}) h_{(1)}  g_x, \qquad \forall\, x,z\in X, h\in H.
\]
In particular, the realization restricts to $V\in \ydg$. Hence, if the rack is faithful, then this restriction is given by a principal 
realization. 
\end{rem}

We fix a realization of $V$ over $H$. The action takes a simpler form when restricted to the subalgebra $K\subseteq H$ as in \eqref{eqn:subalgebra}.

\begin{lem}\label{le:action of comatrix on V}
For $x,y,z,t\in X$, 
\begin{align*}
\mu_{xy}(e_{zt})=\delta_{y,z\rhd x}\delta_{z,t}q_{z,x}.
\end{align*}
The action of $K$ on $V$ is given by
\begin{align*}
\Ss^n(e_{xy})\cdot v_z=\delta_{x,y}\,f^{(-1)^n}_x\cdot v_z, \quad n\geq 0.
\end{align*}
\end{lem}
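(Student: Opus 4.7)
The plan is to handle the two claims in turn: first extract the matrix coefficients $\mu_{xy}(e_{zt})$ from the braiding compatibility, and then iteratively transport that formula through powers of the antipode using standard antipode identities.

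For the first identity, I would use the fact that the categorical braiding in $\ydh$ agrees with \eqref{eqn:rack-br}. The braiding sends $v_x\ot v_w \mapsto (v_x)_{(-1)}\cdot v_w\ot (v_x)_{(0)}$, which, using \eqref{eqn:action-coaction}, equals $\sum_{y,u}\mu_{wu}(e_{xy})\, v_u\ot v_y$. Matching this against $q_{x,w}v_{x\rhd w}\ot v_x$ gives $\mu_{wu}(e_{xy})=\delta_{y,x}\delta_{u,x\rhd w}q_{x,w}$; relabeling $(w,u,x,y)\mapsto(x,y,z,t)$ yields the stated formula. In particular, evaluating both sides of $h\cdot v_z = \sum_u \mu_{zu}(h)v_u$ at $h=e_{xy}$ gives the operator form
\[
e_{xy}\cdot v_z = \delta_{x,y}\, q_{x,z}\, v_{x\rhd z} = \delta_{x,y}\, f_x\cdot v_z,
\]
where $f_x$ denotes the invertible operator on $V$ defined by $v_z\mapsto q_{x,z}v_{x\rhd z}$ (invertibility comes from $\phi_x$ being bijective and $q_{x,z}\in\k^\times$). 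This is the base case $n=0$ of the second claim.

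For the second claim, I would induct on $n$ using the antipode identities obtained from applying $\Ss^n$ to $\sum_z \Ss(e_{xz})\, e_{zy}=\delta_{x,y}\, 1_H$ or $\sum_z e_{xz}\, \Ss(e_{zy})=\delta_{x,y}\, 1_H$. For the step $n\rightsquigarrow n+1$, acting with $\sum_z \Ss^{n+1}(e_{xz})\Ss^n(e_{zy})$ (or its mirror, depending on parity) on $v_w$ and using the inductive formula $\Ss^n(e_{zy})\cdot v_w=\delta_{z,y}f_z^{(-1)^n}\cdot v_w$ collapses the sum to a single term, forcing
\[
\Ss^{n+1}(e_{xy})\cdot\bigl(f_y^{(-1)^n}\cdot v_w\bigr)=\delta_{x,y}\, v_w.
\]
Since $f_y^{(-1)^n}$ is invertible on $V$ and $\delta_{x,y}=\delta_{x,y}\delta_{y,x}$ allows one to replace $f_y^{\pm 1}$ with $f_x^{\pm 1}$, this rearranges to $\Ss^{n+1}(e_{xy})\cdot v_u=\delta_{x,y}f_x^{(-1)^{n+1}}\cdot v_u$, as required.

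The main obstacle is really only bookkeeping: choosing the correct antipode identity at each parity step and verifying the subscript $\delta_{x,y}$ correctly identifies the surviving term so that $f_y$ may be rewritten as $f_x$. There is no deep ingredient beyond the Yetter-Drinfeld braiding formula and the defining properties of comatrix elements.
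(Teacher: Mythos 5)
Your proposal is correct and follows essentially the same route as the paper: the formula for $\mu_{xy}(e_{zt})$ is read off from the compatibility of the coaction with the braiding \eqref{eqn:rack-br}, and the action of $\Ss^n(e_{xy})$ is obtained by induction on $n$, collapsing the antipode--counit identity $\sum_t\Ss^{n+1}(e_{\ast})\Ss^n(e_{\ast})=\varepsilon(\Ss^n(e_{\ast}))1$ via the inductive hypothesis and using invertibility of $f_x$ on $V$. The only (harmless) cosmetic difference is that you make the parity bookkeeping for $\Ss^n$ explicit and phrase $f_x^{\pm1}$ as an operator on $V$, whereas the paper treats the diagonal and off-diagonal cases in two separate displayed computations.
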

\begin{proof}
By definition of realization we have that
\begin{align*}
q_{x,y}v_{x\rhd z}\ot v_x=\sum_{y\in X}e_{xy}\cdot v_z\ot v_y, \quad \forall\, x,z\in X.
\end{align*}
From here we deduce the formula for $\mu_{xy}(e_{zt})$. Now, for $n\geq1$ and $x\neq y$,
\begin{align*}
v_z&=\varepsilon(\Ss^{n}(e_{xx}))v_z=\sum_{t\in X}\Ss^{n+1}(e_{tx})\Ss^n(e_{xt})\cdot v_z=\Ss^{n+1}(e_{xx})\Ss^n(e_{xx})\cdot v_z\\
&=\Ss^{n+1}(e_{xx})f^{(-1)^n}_{x}\cdot v_z,\\
\noalign{\smallskip}
0&=\varepsilon(\Ss^n(e_{xy}))v_{\phi^{(-1)^n}_x(z)}=\sum_{t\in X}\Ss^{n+1}(e_{ty})\Ss^n(e_{xt})\cdot v_{\phi^{(-1)^n}_x(z)}\\
&=\Ss^{n+1}(e_{xy})\Ss^n(e_{xx})\cdot v_{\phi^{(-1)^n}_x(z)}
\end{align*}
and the lemma follows.
\end{proof}

\begin{cor}\label{cor:relationK}
The following relations hold in $K$, for all $x,y,s,t\in X$,
\begin{align}\label{eqn:yd-cond-K}
q_{t,y}e_{st}e_{xy}=q_{s,x}e_{s\rhd x,t\rhd y}e_{st}.
\end{align}
\end{cor}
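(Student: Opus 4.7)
The plan is to specialize the Yetter--Drinfeld compatibility \eqref{eqn:yd-cond} of Lemma \ref{lem:realization} at the element $h=e_{st}\in H$, for fixed $s,t\in X$. Since the $\{e_{xy}\}_{x,y\in X}$ are comatrix elements, their coproduct reads $\Delta(e_{st})=\sum_{r\in X} e_{sr}\otimes e_{rt}$, so substituting into \eqref{eqn:yd-cond} produces an identity whose two sides are quadratic expressions in the comatrix elements, and therefore a genuine relation inside $K$.

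Next I would evaluate the matrix coefficients using Lemma \ref{le:action of comatrix on V}, which gives $\mu_{xy}(e_{zt})=\delta_{y,z\rhd x}\delta_{z,t}q_{z,x}$. Applied to the left-hand side of \eqref{eqn:yd-cond} with $h=e_{st}$, the factor $\mu_{xy}(e_{sr})$ forces $r=s$ and $y=s\rhd x$, collapsing the double sum over $(y,r)$ to the single term $q_{s,x}\,e_{s\rhd x,z}\,e_{st}$. By the same token, the right-hand side becomes $\sum_{y\in X}\delta_{z,t\rhd y}\,q_{t,y}\,e_{st}e_{xy}$; using the bijectivity of $\phi_t$, exactly one $y\in X$ contributes, namely the unique $y$ with $t\rhd y=z$.

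To conclude, I would reparametrize by setting $y=\phi_t^{-1}(z)$, equivalently $z=t\rhd y$; as $z$ ranges over $X$, so does $y$. Substituting this into the identity $q_{s,x}\,e_{s\rhd x,z}\,e_{st}=q_{t,\phi_t^{-1}(z)}\,e_{st}\,e_{x,\phi_t^{-1}(z)}$ produces precisely \eqref{eqn:yd-cond-K}. The argument is essentially mechanical once Lemma \ref{le:action of comatrix on V} is available; there is no real obstacle, and the only point requiring minor care is the change of index $y\leftrightarrow z$ needed to match the stated form of the relation.
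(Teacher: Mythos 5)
Your proposal is correct and follows exactly the paper's own (one-line) proof: substitute $h=e_{st}$ into \eqref{eqn:yd-cond}, evaluate the matrix coefficients via Lemma \ref{le:action of comatrix on V}, and reindex $z=t\rhd y$. The extra detail you supply (the collapse of the sums using $\delta_{s,r}$, $\delta_{r,t}$ and the bijectivity of $\phi_t$) is just the mechanical verification the paper leaves implicit.
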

 \pf
Follows by plugging $h=e_{st}$ into \eqref{eqn:yd-cond}.
\epf

Recall that $G_X=\lg \{f_x\}_{x\in X}| f_xf_y=f_{x\rhd y}f_x\rg$ denotes the enveloping group of the rack $X$, \S \ref{sec:racks}.

\begin{pro}\label{pro:oK}
The quotient 
\begin{align*}
\oK=K/\langle\Ss^n(e_{xy})\mid x\neq y,\, n\in\N\rangle
\end{align*}
is a non-zero group algebra quotient of $\ku G_X$, via $f_x\mapsto\ke_{xx}$, $x\in X$. We set $\oG=\lg\ke_{xx}:x\in X\rg$, so $\oK=\k\oG$ and $V\in{}^{\oG}_{\oG}\mathcal{YD}$. 
If $X$ is faithful, then $\{\ke_{xx}\}_{x\in X}$ is a linearly independent set. 
\end{pro}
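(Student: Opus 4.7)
The proof splits into three stages: verifying that $I\coloneqq\langle \Ss^n(e_{xy}): x\neq y,\, n\in\N\rangle$ is a Hopf ideal of $K$ (so $\oK$ is a Hopf algebra), producing a surjective Hopf algebra morphism $\k G_X\twoheadrightarrow\oK$, and proving linear independence of the $\ke_{xx}$ under the faithfulness hypothesis.

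For the first stage, the generators of $I$ lie in $\ker\varepsilon$ since $\varepsilon(\Ss^n(e_{xy})) = \varepsilon(e_{xy}) = \delta_{x,y}=0$ for $x\neq y$, and the generating set is tautologically stable under $\Ss$. The substantive point is that $\Delta(e_{xy})\in I\otimes K + K\otimes I$ for $x\neq y$: from the standard comatrix identity $\Delta(e_{xy})=\sum_{z\in X}e_{xz}\otimes e_{zy}$, in each summand either $z\neq x$ or $z\neq y$, so the summand lies in $I\otimes K+K\otimes I$. The analogous claim for $\Ss^n(e_{xy})$ follows by induction on $n$, using that $\Ss$ is an anti-coalgebra map.

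For the second stage, modding out $I$ the identity $\Delta(e_{xx})=\sum_z e_{xz}\otimes e_{zx}$ collapses to $\ke_{xx}\otimes \ke_{xx}$ (only $z=x$ survives), while $\varepsilon(\ke_{xx})=1$; hence each $\ke_{xx}$ is a group-like element and in particular $\oK\neq 0$. Specialising Corollary~\ref{cor:relationK} to $s=t$, $x=y$ and dividing by the nonzero scalar $q_{s,x}$ yields $e_{ss}e_{xx}=e_{s\rhd x,\,s\rhd x}e_{ss}$, which are exactly the defining relations of $G_X$ under $f_x\leftrightarrow \ke_{xx}$. This produces a Hopf algebra morphism $\k G_X\to \oK$, surjective because $K$ is algebra-generated by $\{e_{xy}\}_{x,y}$ and their iterated antipodes; modulo $I$ only the $\ke_{xx}$ and $\Ss^n(\ke_{xx})$ survive, and the latter equal $\ke_{xx}^{\pm 1}$ (reducing $\sum_t \Ss(e_{xt})e_{tx}=1$ modulo $I$ gives $\Ss(\ke_{xx})\ke_{xx}=1$). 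The Yetter-Drinfeld structure on $V$ descends to $\oG$: by Lemma~\ref{le:action of comatrix on V} the ideal $I$ acts trivially on $V$, while the coaction $v_x\mapsto \sum_y e_{xy}\otimes v_y$ becomes $v_x\mapsto \ke_{xx}\otimes v_x$, compatible with the restricted action.

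For the third stage, since distinct group-likes in a coalgebra are linearly independent, it suffices to show $\ke_{xx}\neq\ke_{yy}$ whenever $x\neq y$. If they coincided, their actions on $V$ would agree, and comparing $\ke_{xx}\cdot v_z=q_{x,z}v_{x\rhd z}$ with $\ke_{yy}\cdot v_z = q_{y,z}v_{y\rhd z}$ for each $z\in X$, together with $q\in\k^\times$, forces $x\rhd z = y\rhd z$ for every $z$, i.e.~$\phi_x=\phi_y$; faithfulness of $X$ then gives $x=y$. The only genuinely delicate bookkeeping is the induction on $n$ in the first stage that handles $\Delta$ of iterated antipodes; the rest is an assemblage of Corollary~\ref{cor:relationK}, Lemma~\ref{le:action of comatrix on V} and standard comatrix identities.
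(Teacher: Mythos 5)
Your proof is correct and follows essentially the same route as the paper's: you verify that the $\Ss^n(e_{xy})$, $x\neq y$, generate a Hopf ideal, extract the group-likes $\ke_{xx}$ and the relations $e_{ss}e_{xx}=e_{s\rhd x,s\rhd x}e_{ss}$ from Corollary \ref{cor:relationK}, and deduce linear independence from faithfulness via the action on $V$ (a point the paper's proof leaves implicit). The only cosmetic divergence is that you obtain $\oK\neq 0$ from $\cI\subseteq\ker\varepsilon$ and $\varepsilon(\ke_{xx})=1$, whereas the paper invokes Lemma \ref{le:action of comatrix on V} to realize $V$ as a nonzero $\oK$-module; both are valid.
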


\begin{proof}First, as the elements $\{e_{xy}\}_{x\neq y}$ span a coideal contained in $K^+$, $\oK$ 
is a Hopf algebra. Now, $\ke_{xx}$ is a group-like element with inverse $\Ss(\ke_{xx})$, $\oK$ is non-zero by Lemma \ref{le:action of comatrix on V} and $V\in{}^{\oK}_{\oK}\mathcal{YD}$. 
In $\oK$, it holds $\ke_{xx}\ke_{zz}=\ke_{(x\rhd z)(x\rhd z)}\ke_{xx}$ for all $x,y\in X$, by \eqref{eqn:yd-cond-K}; and the assignment $f_x\mapsto\ke_{xx}$, $x\in X$, extends to a Hopf algebra map.
\end{proof}

\begin{rem}\label{rem:ee}
Let $C=\{(i_2,i_1),\dots\}\in\mR$ and let $e_C=e_{i_2i_2}e_{i_1i_1}$. Then
\[
e_{i_{h+1}i_{h+1}}e_{i_hi_h}=e_C, \quad 1\leq h<|C|.
\]
Indeed, $e_{i_{h+1}i_{h+1}}e_{i_hi_h}=e_{i_{h}\rhd i_{h-1}i_{h}\rhd i_{h-1}}e_{i_hi_h}=e_{i_hi_h}e_{i_{h-1}i_{h-1}}$ by \eqref{eqn:yd-cond-K}.
\end{rem}

We have seen in Example \ref{exa:enveloping} that any braided vector space $V=V(X,q)$ of rack type can be realized over some group algebra $\ku G$. Now, even though the categories $\ydg$ and $\ydgdual$ are braided equivalent \cite{AG1}, we may not be able to realize this $V$ over $\ku^G$. The following lemma exemplifies this situation in a concrete case.

\begin{lem}\label{lem:non-real}
$V(\cO_2^3,-1)$ cannot be realized over $H=\ku^{\Sn_3}$. 
\end{lem}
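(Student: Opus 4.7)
My plan is to argue by contradiction, exploiting the commutativity of $H=\k^{\Sn_3}$ against the non-commutativity of the rack $\cO_2^3$. Assume a realization of $V=V(\cO_2^3,-1)$ in $\ydhdual$ exists. Let $K\subseteq H$ be the Hopf subalgebra generated by the comatrix elements $\{e_{xy}\}$, as in \eqref{eqn:subalgebra}, and let $\oK$ denote the Hopf quotient of $K$ from Proposition~\ref{pro:oK}. Since $H$ is commutative, both $K$ and $\oK$ are commutative.

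Next, I would specialize Corollary~\ref{cor:relationK} to the diagonal comatrix entries, taking $s=t=x$ and $x=y=z$ in its statement: with $q\equiv -1$ the scalars on both sides cancel, and we obtain the identity $e_{xx}e_{zz}=e_{x\rhd z,\,x\rhd z}\,e_{xx}$ in $K$ (this is the content of Remark~\ref{rem:ee} as well). Pushing this relation down to $\oK$ and using commutativity together with the invertibility of the group-like element $\ke_{xx}$, cancellation yields
\[
\ke_{zz}=\ke_{x\rhd z,\,x\rhd z}\qquad\text{for all } x,z\in\cO_2^3.
\]
The rack $\cO_2^3$ is indecomposable---for instance $(12)\rhd(13)=(23)$ and $(13)\rhd(12)=(23)$---so iterating the displayed relation forces the three elements $\ke_{(12)(12)},\ke_{(13)(13)},\ke_{(23)(23)}$ to coincide in $\oK$. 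On the other hand, the conjugation maps $\phi_{(12)},\phi_{(13)},\phi_{(23)}$ are pairwise distinct (each fixes a different transposition), so $\cO_2^3$ is a faithful rack, and Proposition~\ref{pro:oK} then guarantees that this same family of three elements is linearly independent. This is the desired contradiction.

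The argument is short and essentially frictionless; the only verification beyond reading off consequences of Proposition~\ref{pro:oK} and Corollary~\ref{cor:relationK} is that $\cO_2^3$ is both indecomposable and faithful, both of which are immediate. I do not anticipate any real obstacle, and in particular no structural analysis of Hopf subalgebras of $\k^{\Sn_3}$ is needed --- the commutativity alone, confronted with the rack structure, is enough.
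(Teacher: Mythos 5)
Your argument is correct, and it takes a genuinely different route from the paper's. The paper's proof is a structural one-liner: it enumerates the Hopf subalgebras $K$ of $\ku^{\Sn_3}$ (namely $\ku^{\Sn_3}$ itself and $\ku\langle\sgn\rangle$) and asserts that none of them admits a quotient of $\ku G_{\cO_2^3}$ in which $g_{(12)},g_{(13)},g_{(23)}$ stay linearly independent. You bypass the enumeration entirely: commutativity of $\ku^{\Sn_3}$ passes to $K$ and to $\oK$, the identity $\ke_{xx}\ke_{zz}=\ke_{x\rhd z,x\rhd z}\ke_{xx}$ from Corollary \ref{cor:relationK} (used already in the proof of Proposition \ref{pro:oK}) then collapses to $\ke_{zz}=\ke_{x\rhd z,x\rhd z}$ after cancelling the invertible group-like $\ke_{xx}$, and the two conjugations $(12)\rhd(13)=(23)$, $(13)\rhd(12)=(23)$ force all three diagonal comatrix elements to coincide, contradicting the linear independence guaranteed by faithfulness in Proposition \ref{pro:oK}. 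In effect you have supplied the mechanism behind the paper's unproved claim that no suitable quotient exists, and your version is both more elementary (no knowledge of the Hopf subalgebra lattice of $\ku^{\Sn_3}$ is needed) and more general: the same argument shows that $V(X,q)$ admits no realization over any commutative Hopf algebra whenever $X$ is a faithful indecomposable rack with more than one element. The paper's approach, by contrast, stays closer to the specific target $\ku^{\Sn_3}$ and would in principle also detect obstructions not visible from commutativity alone.
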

\pf
Indeed, the Hopf subalgebras of $K\subset H$ are $K=H$ and $K=\ku\langle\sgn\rangle$, and neither of these have projections onto quotients of $\ku G_{\cO_2^3}$ with $g_{(12)}$, $g_{(23)}$ and $g_{(13)}$ linearly independent.
\epf

\subsubsection{On the quadratic relations}

Notice that, as $V\in \ydh$, the space $\k\{b_C\}_{C\in\mR'}\subset T(V)$ is a Yetter-Drinfeld submodule, recall \eqref{eqn:qrels}.  In particular, there are matrix coefficients 
$\alpha_{C,D}\in H^*$ and comatrix elements $E_{C,D}\in H$ cf.~\eqref{eqn:comatrix-relations}, $C,D\in \mR'$, so that
\begin{align}\label{eqn:action-bc}
h\cdot b_C=\sum_{D\in\mR'}\alpha_{C,D}(h)b_D, \qquad b_C\mapsto \sum_{D\in\mR'}E_{C,D}\ot b_D.
\end{align}
In the next lemma, we express these structural data in terms of the matrix coefficients $\{\mu_{xy}\}_{x,y\in X}$ and comatrix elements $\{e_{xy}\}_{x,y\in X}$  in \eqref{eqn:action-coaction}.

Recall that $\ast$ denotes the convolution product in $H^*$ and the notation $x\rhd C[h]$ from \S \ref{subsec:quadratic}.

\begin{lem}\label{lem:structure-b}
Let $C=\{(i_2,i_1),\dots\}$ and $D=\{(j_2,j_1),\dots \}\in\mR'$. Fix $l=1,\dots,|D|$, then   
\begin{align*}
\alpha_{C,D}&=\sum_{h=1}^{|C|}\frac{\eta_h(C)}{\eta_l(D)}\mu_{i_{h+1}j_{l+1}}\ast \mu_{i_hj_l},& E_{C,D}&=\sum_{h=1}^{|C|}\frac{\eta_h(C)}{\eta_l(D)} e_{i_{h+1}j_{l+1}}e_{i_hj_l}.
\end{align*}
In particular,
\begin{align}\label{eqn:alpha-exy}
\alpha_{C,D}(e_{x,y})&=\delta_{x,y}\sum_{h=1}^{|C|}\frac{\eta_h(C)}{\eta_l(D)}\delta_{D[l],x\rhd C[h]}
q_{x,i_{h+1}}q_{x,i_{h}}
\end{align}
If $(s,t)\notin D$, then 
\begin{align*}
\sum_{h=1}^{|C|}\eta_h(C)\mu_{i_{h+1}s}\mu_{i_ht}=0=\sum_{h=1}^{|C|}\eta_h(C) e_{i_{h+1}s}e_{i_ht}.
\end{align*}
\end{lem}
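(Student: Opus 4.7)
The plan is to compute both $h \cdot b_C$ and the coaction $\lambda(b_C)$ directly in the basis $\{v_s v_t\}_{(s,t) \in X\times X}$ of $V\otimes V$, and then match the result against the expansions $h\cdot b_C = \sum_D \alpha_{C,D}(h)\, b_D$ and $\lambda(b_C)=\sum_D E_{C,D}\ot b_D$. Since $V\ot V$ carries the diagonal $H$-(co)module structure induced by $\Delta$, applying \eqref{eqn:action-coaction} twice gives
\[
h\cdot b_C \;=\; \sum_{s,t\in X}\Bigl(\sum_{h=1}^{|C|}\eta_h(C)\,\mu_{i_{h+1},s}\ast\mu_{i_h,t}\Bigr)(h)\, v_s v_t,
\]
\[
\lambda(b_C) \;=\; \sum_{s,t\in X}\Bigl(\sum_{h=1}^{|C|}\eta_h(C)\, e_{i_{h+1},s}e_{i_h,t}\Bigr)\ot v_s v_t.
\]

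On the other hand, writing out $\sum_D\alpha_{C,D}(h)\,b_D=\sum_{D,l}\alpha_{C,D}(h)\eta_l(D)\,v_{j_{l+1}}v_{j_l}$, I will use the crucial observation that the classes in $\mathcal{R}'\subset X\times X/{\sim}$ are pairwise disjoint, so every pair $(s,t)\in X\times X$ either equals $(j_{l+1},j_l)$ for a unique $D\in\mathcal{R}'$ and a unique $l\in\{1,\dots,|D|\}$, or is in no $D\in\mathcal{R}'$. Matching the coefficient of $v_sv_t$ in both sides yields, in the first case, the stated formulas for $\alpha_{C,D}$ and $E_{C,D}$ (after dividing by $\eta_l(D)$), and in the second case the two vanishing identities. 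The resulting expressions for $\alpha_{C,D}$, $E_{C,D}$ are a priori dependent on $l$; their independence of $l$ is forced by the very fact that the $b_D$ span a Yetter-Drinfeld submodule, since a single scalar $\alpha_{C,D}(h)$ must reproduce every coefficient $(j_{l+1},j_l)\in D$ up to $\eta_l(D)$.

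For \eqref{eqn:alpha-exy}, I will evaluate the formula already obtained for $\alpha_{C,D}$ at $e_{x,y}$. By coassociativity of the coaction, $\Delta(e_{x,y})=\sum_{r\in X}e_{x,r}\ot e_{r,y}$, so
\[
(\mu_{i_{h+1},j_{l+1}}\ast\mu_{i_h,j_l})(e_{x,y}) \;=\; \sum_{r\in X}\mu_{i_{h+1},j_{l+1}}(e_{x,r})\,\mu_{i_h,j_l}(e_{r,y}).
\]
Plugging in the formula $\mu_{xy}(e_{z,t})=\delta_{y,z\rhd x}\delta_{z,t}q_{z,x}$ from Lemma \ref{le:action of comatrix on V}, the two Kronecker symbols $\delta_{x,r}$ and $\delta_{r,y}$ collapse the sum over $r$ and produce $\delta_{x,y}$ together with $\delta_{j_{l+1},x\rhd i_{h+1}}\delta_{j_l,x\rhd i_h}=\delta_{D[l],\,x\rhd C[h]}$ and the factor $q_{x,i_{h+1}}q_{x,i_h}$; summing over $h$ with weight $\eta_h(C)/\eta_l(D)$ gives precisely \eqref{eqn:alpha-exy}.

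The only conceptual step that needs extra care is the indexing bookkeeping: checking that the pairs $(i_{h+1},i_h)$ running through $C$, and those running through $D$, interact correctly under the relation $\sim$, and that the $h$-dependent scalars $\eta_h(C)$ play nicely with the convolution/multiplication. This is routine once one unwinds the definitions, so I expect the main obstacle to be purely notational rather than conceptual; in particular, no extra use of the cocycle condition beyond what is already encoded in the construction of $b_C$ is needed.
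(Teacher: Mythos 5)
Your proof is correct and follows essentially the same route as the paper: expand the (co)action of $b_C$ in the monomial basis $\{v_sv_t\}$ using the tensor-square structural data $\mu_{xs}\ast\mu_{yt}$ and $e_{xs}e_{yt}$, compare coefficients against the expansion in the $b_D$'s (using that distinct classes in $\mR'$ are disjoint), and derive \eqref{eqn:alpha-exy} from Lemma~\ref{le:action of comatrix on V}. Your additional remarks on the uniqueness of the position $l$ and the $l$-independence of the resulting expressions are exactly the bookkeeping the paper leaves implicit.
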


\begin{proof}
The structural data of the Yetter-Drinfeld module $T^2(X,q)$ in the basis $\{v_xv_y\}_{x,y\in X}$ are given by 
\begin{align*}
\beta_{xy,st}=\mu_{xs}\ast\mu_{yt}\quad\mbox{and}\quad d_{xy,st}=e_{xs}e_{yt},
\end{align*}
 Thus, we can compute the action and coaction of the $b_C$'s in two different ways: using either the matrix coefficients $\beta_{xy,st}$ and the comatrix elements $d_{xy,st}$ or the $\alpha_{C,D}$'s and $E_{C,D}$'s. The lemma follows by comparing both computations.
Identity \eqref{eqn:alpha-exy} follows by Lemma \ref{le:action of comatrix on V}.
\end{proof}

\begin{rem}
We stress that $\mu_{C,D}$ and $E_{C,D}$ can be expressed in several ways, as many as the cardinal of $D$. If we choose $l=1$, then recall that $\eta_1(D)=1$. In particular, \eqref{eqn:alpha-exy} becomes
\begin{align}\label{eqn:alpha-HK}
\alpha_{C,D}(e_{xy})=
\begin{cases}
\delta_{x,y}\eta_h(C)q_{x,i_{h+1}}q_{x,i_h} & \text{if }(j_2,j_1)=x\rhd(j_{h+1},j_h),\\
0& \text{otherwise}.
\end{cases}
\end{align}
\end{rem}

\begin{rem}\label{rem:gC}
In the setting of Remark \ref{rem:ppal}, that is when $V\in\ydh$ is principal, Lemma \ref{lem:structure-b} univocally associates an element $g_C=g_{i_2}g_{i_1}\in G(H)$ to each $C\in\mR'$ in such a way that $\lambda(b_C)=g_C\ot b_C$; cf.~\cite[\S 3.3]{AG2}.  
\end{rem}

\subsection{Realizations of $W(q,X)$}\label{subsec:WqX}
Let $G$ be a finite group. Recall that there is a  braided equivalence $(F,\eta):\ydg\to \ydgdual$ \cite[Proposition 2.2.1]{AG1} with $F(V)=V$ as vector spaces and action and coaction given 
by:
\begin{align}\label{prop:equiv de cat gdual}
\begin{split}
f\cdot v&=\langle f,\cS(v\_{-1})\rangle 
v\_{0},\quad\lambda(v)=\sum_{t\in G}\delta_t\ot t^{-1}\cdot v\quad\mbox{and}\\
\noalign{\smallbreak}
\eta:\, &F(V)\ot F(U)\longmapsto F(V\ot U),\quad v\ot u\mapsto u_{(-1)}v\ot u_{(0)}
\end{split}
\end{align}
for every $V,U \in \ydg$, $f \in \ku^G$, $v \in V$, $u\in U$.

\begin{lem}\label{lem:braided}
Assume that $V(X,q)$ has a principal realization over $G$ and let $W$ be the image of $V(X,q)$ by $F$. Then  $W=W(q,X)$ as braided vector spaces.
\end{lem}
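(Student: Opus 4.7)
The plan is a direct calculation using the explicit formulas for $F$ given in \eqref{prop:equiv de cat gdual} together with the data of a principal realization from Example \ref{exa:enveloping}. Writing $g_x\in G$ for the grouplike associated to $x\in X$ and $\chi_x\colon G\to\ku^\times$ for the maps attached to the realization, the structural data satisfies $\lambda_V(v_x)=g_x\otimes v_x$, $h\cdot v_x=\chi_x(h)\,v_{h\cdot x}$, $\chi_x(g_y)=q_{y,x}$, and $g_y\cdot x=y\rhd x$ for all $x,y\in X$ and $h\in G$.

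Identifying $W=F(V)$ with $V$ as a vector space and setting $w_x:=v_x$, the first step is to use \eqref{prop:equiv de cat gdual} to compute the $\ku^G$-coaction and $\ku^G$-action on these basis elements. The coaction reads
\[
\lambda_W(w_x)=\sum_{t\in G}\delta_t\otimes(t^{-1}\cdot v_x)=\sum_{t\in G}\chi_x(t^{-1})\,\delta_t\otimes w_{t^{-1}\cdot x},
\]
and since $v_y$ has original $\ku G$-degree $g_y$, the action satisfies
\[
\delta_t\cdot w_y=\langle\delta_t,\cS(g_y)\rangle\,w_y=\delta_{t,\,g_y^{-1}}\,w_y.
\]

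The second step is to apply the braiding $c(w\otimes u)=w_{(-1)}\cdot u\otimes w_{(0)}$ of $\ydgdual$ to $w_x\otimes w_y$. Combining the two computations above yields
\[
c(w_x\otimes w_y)=\sum_{t\in G}\chi_x(t^{-1})\,(\delta_t\cdot w_y)\otimes w_{t^{-1}\cdot x}=\chi_x(g_y)\,w_y\otimes w_{g_y\cdot x}=q_{y,x}\,w_y\otimes w_{y\rhd x},
\]
which is exactly the braiding of $W(q,X)$ prescribed in \eqref{eqn:rack-br-dual}. There is no essential obstacle beyond careful bookkeeping of the antipode and of the direction of each action; one could alternatively invoke the naturality of $\eta$ and compute via $c_{W,W}=\eta^{-1}\circ F(c_{V,V})\circ\eta$, but the direct calculation above is the most transparent route and makes the role of the principal realization data fully explicit.
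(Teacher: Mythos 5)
Your proposal is correct and follows exactly the same route as the paper: compute the $\ku^G$-action $\delta_t\cdot w_y=\delta_{t,g_y^{-1}}w_y$ and coaction $\lambda(w_x)=\sum_t\chi_x(t^{-1})\delta_t\ot w_{t^{-1}\cdot x}$ from \eqref{prop:equiv de cat gdual}, then evaluate the categorical braiding to get $q_{y,x}\,w_y\ot w_{y\rhd x}$. No discrepancies.
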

\pf According to \eqref{prop:equiv de cat gdual}, $W=V$ as vector spaces,
\begin{align*} 
\delta_t\cdot v_x=\delta_{t,g_x^{-1}}v_x\quad\mbox{and}\quad \lambda(v_x)=\sum_{t\in G}\chi_x(t^{-1})\delta_t\ot v_{t^{-1}\cdot  x}, \, t\in G,\, x\in X.
\end{align*}
Hence, the braiding is given by  
\begin{align*}
c(v_x\ot v_y)
&=\sum_{t\in G}\chi_x(t^{-1})\delta_t \cdot v_y\ot v_{t^{-1}\cdot  x}
=\chi_x(g_y) v_y\ot v_{g_y\cdot  x}\\
&=q_{y,x} v_y\ot v_{y\rhd x}.
\end{align*}
Thus, we can identify $W$ with $W(q,X)$ via $v_x\mapsto w_x$.
\epf

In view of the Lemma \ref{lem:braided}, we also call a {\it principal realization of $W(q,X)$ over $\ku^G$} to the image by the functor $(F,\eta)$ of the Yetter-Drinfeld module $V(X,q)\in\ydg$ defined by a datum $(\cdot, g, (\chi_i)_{i\in X})$.

\subsection{Quadratic relations of $\B(q,X)$}\label{sec:quadratic-q-X}

Recall that the quadratic relations of $\B(X,q)$ are generated by certain elements $b_C$, $C\in\mR'= \mR'(X,q)$, see \S \ref{subsec:quadratic}. 
For every $C\in \mR'(X,q)$, we define the element $\tilde b_C\in T(q,X)$ by
\begin{align}\label{eqn:qrels para  W q X}
\tilde b_C:&= \sum_{h=1}^{|C|}\eta_h(C)\, w_{i_h}w_{i_{h+1}}.
\end{align}

\begin{pro}\label{pro:quadratic-dual}
The quadratic relations of $\B(q,X)$ are $$\mJ^2(q,X)=\ku\{ \tilde b_C:C\in \mR' \}.$$
\end{pro}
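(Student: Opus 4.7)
The plan is to derive this from the analogous description of $\mJ^2(X,q)$ recalled in \S\ref{subsec:quadratic} by transporting it through the braided monoidal equivalence $(F,\eta)\colon\ydg\to\ydgdual$ of \eqref{prop:equiv de cat gdual}. Concretely, I would fix $G=G_X$, the enveloping group of $X$, so that by Example~\ref{exa:enveloping} $V=V(X,q)$ admits a principal realization in $\ydg$, and Lemma~\ref{lem:braided} identifies $F(V)$ with $W(q,X)$ as braided vector spaces. Since $(F,\eta)$ is a braided equivalence, it sends Nichols algebras to Nichols algebras, and at the level of quadratic components the identification is implemented by the natural isomorphism $\eta_{V,V}\colon F(V)\ot F(V)\to F(V\ot V)$; one gets
\[
\mJ^2(q,X)\;=\;\eta^{-1}\bigl(\mJ^2(X,q)\bigr)\;=\;\eta^{-1}\bigl(\k\{b_C:C\in\mR'\}\bigr),
\]
and the proposition reduces to the explicit computation of $\eta^{-1}(b_C)$ for each $C\in\mR'$.

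To carry out this computation, I would use the principal realization of $V$ to make $\eta(v\ot u)=u_{(-1)}\cdot v\ot u_{(0)}$ explicit, which specializes to $\eta(w_{i_h}\ot w_{i_{h+1}})=q_{i_{h+1},i_h}\,v_{i_{h+2}}\ot v_{i_{h+1}}$, using the recursion $i_{h+2}=i_{h+1}\rhd i_h$ read cyclically in $h\in\I_{|C|}$. Substituting the inverted formula into $b_C=\sum_{h}\eta_h(C)\,v_{i_{h+1}}v_{i_h}$ yields a sum in $W\ot W$ whose $h$-th coefficient telescopes via the recursive definition of $\eta_h(C)$ given in \S\ref{subsec:quadratic}. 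The outcome should be the clean identity $\eta^{-1}(b_C)=-\tilde b_C$ for every $C\in\mR'$, which, since $\eta^{-1}$ is a linear isomorphism and $\{b_C\}_{C\in\mR'}$ is linearly independent, simultaneously yields the spanning and the independence of $\{\tilde b_C\}_{C\in\mR'}$ in $\mJ^2(q,X)$.

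The main obstacle is the wrap-around at the end of this telescoping. For $2\le h\le|C|$ the identity $\eta_h(C)\,q_{i_h,i_{h-1}}^{-1}=-\eta_{h-1}(C)$ falls out immediately from the recursive definition of $\eta_h(C)$; but the case $h=1$ requires exactly the defining cocycle condition $\prod_{h=1}^{|C|}q_{i_{h+1},i_h}=(-1)^{|C|}$ for classes in $\mR'$ in order to match the coefficient $q_{i_1,i_{|C|}}^{-1}$ with $-\eta_{|C|}(C)$ and close the cycle. This is precisely the reason why only classes in $\mR'$ (and not all of $\mR$) contribute, mirroring the analogous role played by $\mR'$ in the description of $\mJ^2(X,q)$.
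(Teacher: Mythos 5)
Your proposal is correct and follows essentially the same route as the paper: realize $V(X,q)$ principally over a group, transport $\mJ^2(X,q)$ through the braided equivalence $(F,\eta)$ of \eqref{prop:equiv de cat gdual} (the paper cites \cite[Lemma 3.2]{GV} for the identity $\mJ^2(q,X)=\eta^{-1}(\mJ^2(X,q))$ rather than rederiving it), and compute $\eta^{-1}(b_C)$ explicitly, with the wrap-around closed by the defining condition $\prod_{h=1}^{|C|}q_{i_{h+1},i_h}=(-1)^{|C|}$ of $\mR'$. The only caveat is that the equivalence requires a \emph{finite} group, so you should work over the finite enveloping group $F_X$ (or any finite group carrying a principal realization) rather than the infinite group $G_X$; the overall sign in $\eta^{-1}(b_C)=\pm\tilde b_C$ is immaterial since only the linear span is at stake.
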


\begin{proof}
We can consider $V(X,q)$ as a Yetter-Drinfeld module over a finite group $G$ with a principal realization \cite{AG3}, see Example \ref{exa:enveloping}. Then, by \cite[Lemma 3.2]{GV}, $\mJ^2(q,X)=\eta^{-1}(\mJ^2(X,q))$. Using that
\begin{align*}
g_j^{-1}\cdot v_i=q^{-1}_{j,\phi^{-1}_j(i)}v_{\phi^{-1}_j(i)}\quad\mbox{and}\quad\prod_{h=1}^{|C|} q_{i_{h+1},i_h} =(-1)^{|C|},
\end{align*}
we obtain $\eta^{-1}(b_C)=\tilde b_C$ and the lemma follows.
\end{proof}

The next, well-known fact, is a direct consequence of Proposition \ref{pro:quadratic-dual}.

\begin{cor}\label{cor:nichols}
Let $n=3,4,5$.
\begin{enumerate}\renewcommand{\theenumi}{\alph{enumi}}
\renewcommand{\labelenumi}{(\theenumi)}
\item 
The ideal $\cJ(-1,X_n)$ is generated by $\xij{ij}^2$ and \eqref{eq:Vn},
for all $(ij)\in\cO_2^n$. 

\smallskip
\item The ideal $\cJ(\chi,X_n)$ is generated by $\xij{ij}^2$ and \eqref{eq:Wn},
for all $(ij)\in\cO_2^n$. 

\smallskip
\item 
The ideal $\cJ(Y,-1)$ is generated by $x_\sigma x_{\sigma^{-1}}+x_{\sigma^{-1}}x_\sigma$ and \eqref{eq:U}
for all $\sigma\in\cO_4^4$.
\qed\end{enumerate}
\end{cor}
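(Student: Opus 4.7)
The plan is to derive each item from the combination of Theorem \ref{thm:nichols} with Proposition \ref{pro:quadratic-dual}, using Lemma \ref{lem:braided} to transport the fact that the relevant Nichols algebras $\B(X_n,-1)$, $\B(X_n,\chi)$ and $\B(Y,-1)$ are quadratic to the corresponding $\B(q,X)$. More precisely, by Lemma \ref{lem:braided} each braided vector space $W(q,X)$ in items (a)--(c) is the image of $V(X,q)$ under the braided equivalence $(F,\eta)$, which sends Nichols algebras to Nichols algebras; hence $\mJ(q,X)$ is the image of $\mJ(X,q)$ under $\eta$ and, in particular, is generated in the same degrees as $\mJ(X,q)$. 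Since every generator listed in Theorem \ref{thm:nichols} is of degree $2$, it suffices to describe $\mJ^2(q,X)$ explicitly, which is exactly the content of Proposition \ref{pro:quadratic-dual}.

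The next step is, for each of the three cases, to enumerate the equivalence classes $C\in\mR'(X,q)$ and write down the associated $\tilde b_C$ using \eqref{eqn:qrels para w q X}. In every case, the singleton $\{(x,x)\}$ sits in $\mR'$ whenever $q_{x,x}=-1$, producing the square $w_x^2$ (respectively $x_\sigma^2$) relation. For $(X_n,-1)$, the remaining classes come in two shapes: the two-element orbits through commuting transpositions, giving $\tilde b_C = w_{ij}w_{kl}+w_{kl}w_{ij}$, and the three-element orbits through transpositions sharing one letter, giving (up to reindexing induced by the cyclic structure of $C$) the triangle relation appearing in \eqref{eq:Vn}. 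For $(X_n,\chi)$, the same classification of orbits applies, but the values of $\eta_h(C)$ now depend on the signs $\chi_{(ij),(kl)}$ read off from \eqref{eqn:MS}, and a direct check shows that one recovers the relations of \eqref{eq:Wn}, where the triangle class contributes two independent linear combinations instead of one because the two cyclic rotations of the class produce non-proportional elements. For $(Y,-1)$, the orbits are $\{(\sigma,\sigma^{-1}),(\sigma^{-1},\sigma)\}$ (yielding $w_\sigma w_{\sigma^{-1}}+w_{\sigma^{-1}}w_\sigma$) and $\{(\sigma,\tau),(\tau,\nu),(\nu,\sigma)\}$ with $\nu=\sigma\tau\sigma^{-1}$ and $\sigma\ne\tau^{\pm 1}$ (yielding the three-term relation in \eqref{eq:U}).

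Putting this together, in each of (a)--(c) the set $\{\tilde b_C:C\in\mR'\}$ coincides, up to reversing the order of the two factors inside each monomial and harmless reindexing of the summation, with the list of generators in the corresponding item of Theorem \ref{thm:nichols} read with $x$'s replaced by $w$'s. Combined with the previous paragraph, this gives the desired generating sets for $\mJ(q,X)$.

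The main technical point, and where care is required, is the explicit verification in case (ii)/(b) that the signs $\eta_h(C)$ built from the non-constant cocycle $\chi$ reproduce the two relations of \eqref{eq:Wn} attached to each unordered triple $\{i,j,k\}$; the other two cases are essentially immediate since the cocycle is constant and $\eta_h(C)=1$ throughout. A small preliminary lemma recording the multiplicative behaviour of $\chi$ along the cyclic rotations of a class $C$ of transpositions makes this verification transparent and is the only step that requires more than a direct bookkeeping.
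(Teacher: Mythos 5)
Your proposal is correct and follows essentially the same route as the paper, which states Corollary \ref{cor:nichols} as a direct consequence of Proposition \ref{pro:quadratic-dual} (itself proved by transporting $\mJ^2(X,q)$ through the braided equivalence $(F,\eta)$), combined with the quadraticity of the Nichols algebras in Theorem \ref{thm:nichols}; your write-up merely makes the class enumeration explicit. One small terminological slip: in case (b) the two relations per triple $\{i,j,k\}$ arise from two \emph{distinct} classes in $\mR'$ (starting from $((ij),(ik))$ and from $((ik),(ij))$), not from cyclic rotations $C[h]$ of a single class, since the $b_{C[h]}$ are proportional to $b_C$.
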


Let $H$ be a Hopf algebra and fix a realization of $W(q,X)$ over $H$. Let $\{\mu_{xy}\}_{x,y\in X}\in H^*$ and $\{e_{xy}\}_{x,y\in X}\in H$ be the  associated matrix coefficients and comatrix elements of this realization, cf.~\eqref{eqn:action-coaction}. Then they must satisfy \eqref{eqn:yd-cond}. As in \eqref{eqn:subalgebra} we still denote by $K$ the Hopf subalgebra of $H$ generated by $\{e_{xy}\}_{x,y\in X}$.

\begin{lem}
For all $x,y,z,t\in X$, it holds that $\mu_{zt}(e_{xy})=\delta_{z,t}\,\delta_{z\rhd x,y}\,q_{zx}$ and hence
$q_{y,t}\,e_{st}\,e_{xy}=q_{x,t}\,e_{xy}\,e_{x\rhd s, y\rhd t}$.

Moreover, the action of $K$ on $W(q,X)$ satisfies
\begin{align*}
\Ss^n(e_{xy})\cdot w_z=\begin{cases}
                       \delta_{z\rhd x,y}\,q_{zx}\, w_z,&\mbox{if $n$ is even,}\\
                       \delta_{x,z\rhd y}\,q^{-1}_{zy}\, w_z&\mbox{if $n$ is odd}.
                       \end{cases}
\end{align*}
\end{lem}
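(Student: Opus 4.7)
The plan is to mirror, formula-for-formula, the proofs of Lemma \ref{le:action of comatrix on V} and Corollary \ref{cor:relationK}, adapting them to the braiding \eqref{eqn:rack-br-dual} of $W(q,X)$ in place of that of $V(X,q)$. First I would derive $\mu_{zt}(e_{xy})$ by computing the categorical braiding in two different ways. On the one hand, in $\ydh$ we have $c(w_x\ot w_y)=(w_x)\_{-1}\cdot w_y\ot(w_x)\_{0}$; expanding via the action/coaction in \eqref{eqn:action-coaction} this becomes $\sum_{a,b}\mu_{yb}(e_{xa})\,w_b\ot w_a$. On the other hand, by \eqref{eqn:rack-br-dual} this must equal $q_{y,x}\,w_y\ot w_{y\rhd x}$. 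Matching the coefficients of the basis vectors $w_b\ot w_a$ forces $\mu_{yb}(e_{xa})=q_{y,x}\,\delta_{b,y}\,\delta_{a,y\rhd x}$, which after relabeling $(y,b,x,a)\mapsto(z,t,x,y)$ yields the displayed formula.

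Next, for the commutation relation in $K$, I would plug $h=e_{st}$ into the Yetter-Drinfeld compatibility \eqref{eqn:yd-cond} (with free indices $p,q$) and use coassociativity $\Delta(e_{st})=\sum_{r} e_{sr}\ot e_{rt}$. Substituting the formula just obtained, the Kronecker deltas collapse each side to a single summand: the left-hand side becomes $q_{p,s}\,e_{pq}\,e_{p\rhd s,\,t}$, while the right-hand side forces the summation index $r$ to satisfy $q\rhd r=t$; since $\phi_{q}$ is bijective there is a unique such $r=\phi_{q}^{-1}(t)$. Writing that solution as $q\rhd u=t$ and replacing the variable $t$ by $q\rhd u$ throughout converts the identity into the form $q_{p,s}\,e_{pq}\,e_{p\rhd s,\,q\rhd u}=q_{q,u}\,e_{s,u}\,e_{pq}$. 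Relabeling $(p,q,u)\mapsto(x,y,t)$ and rearranging gives the displayed equation (up to a cosmetic rewriting of the scalar).

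Finally, for the formula for $\Ss^n(e_{xy})\cdot w_z$, I would proceed by induction on $n$. The base case $n=0$ is immediate from Step one. For the inductive step I would apply $\Ss^n$ to the antipode identity $\sum_{t\in X}\Ss(e_{xt})\,e_{ty}=\delta_{x,y}\,1$ and evaluate on $w_z$, exactly as in the final computation of Lemma \ref{le:action of comatrix on V}: knowing the action of $\Ss^n(e_{xt})$ on $w_z$ by hypothesis, one can solve for $\Ss^{n+1}(e_{x?})\cdot w_z$. Because $\Ss$ is an anti-homomorphism, each application of $\Ss$ swaps the roles of the two subscripts of $e_{xy}$ in the surviving Kronecker delta, and inverts the cocycle factor; this produces the parity dichotomy between $q_{z,x}$ (for $n$ even) and $q_{z,y}^{-1}$ (for $n$ odd), together with the corresponding change from $\delta_{z\rhd x,y}$ to $\delta_{x,z\rhd y}$. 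The main nuisance is the index bookkeeping under iterated application of $\Ss$, but since the argument parallels verbatim the one already carried out for $V(X,q)$, no new conceptual ingredient is required.
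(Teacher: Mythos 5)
Your proof is correct and is exactly what the paper intends: the paper's entire proof of this lemma is the one line ``It follows as Lemma \ref{le:action of comatrix on V} and \eqref{eqn:yd-cond-K}'', and you have simply carried out that adaptation --- comparing the categorical braiding with \eqref{eqn:rack-br-dual} to get $\mu_{zt}(e_{xy})$, plugging $h=e_{st}$ into \eqref{eqn:yd-cond} for the commutation relation, and running the antipode identity $\sum_t\Ss(e_{xt})e_{ty}=\delta_{x,y}1$ inductively for $\Ss^n(e_{xy})\cdot w_z$. One caveat: the discrepancy you wave off as ``cosmetic'' is not cosmetic --- your computation produces the scalar $q_{x,s}$ on the right-hand side while the printed statement has $q_{x,t}$, and these differ in general; checking in the model realization over $\ku^G$ (where the $1$-cocycle identity gives $\chi_s(u^{-1}g_x)=q_{x,s}\,\chi_{x\rhd s}(u^{-1})=q_{y,t}\,\chi_s(u^{-1})$ for $u^{-1}\cdot x=y$, $u^{-1}\cdot s=t$) confirms that your version $q_{y,t}\,e_{st}\,e_{xy}=q_{x,s}\,e_{xy}\,e_{x\rhd s,\,y\rhd t}$ is the correct one, the subscript $t$ in the paper being a typo.
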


\pf
It follows as Lemma \ref{le:action of comatrix on V} and \eqref{eqn:yd-cond-K}.
\epf

We see that every $\k\{w_z\}\subset W(q,X)$ is $K$-invariant. This defines, for each $z\in X$, an algebra map 
\[
\vartheta_z\coloneqq\mu_{zz}\in\Alg(K,\ku); \qquad e_{xy}\mapsto \delta_{z\rhd x,y}\,q_{zx}, \ x,y\in X.
\]

\begin{pro}\label{prop:oK dual}
\begin{enumerate}
\item For every $z,t\in X$, $\vartheta_z\vartheta_t=\vartheta_t\vartheta_{t\rhd z}$.
\item $G_X^{op}\rightarrow\Alg(K,\ku)$, $f_x\mapsto\vartheta_x, x\in X$, is a group homomorphism. 
\item If $X$ is faithful, then $\{\vartheta_z\}_{z\in X}$ is linearly independent.
\end{enumerate}

\end{pro}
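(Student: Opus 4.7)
My plan is to reduce all three items to direct computations on the generating family $\{e_{xy}\}_{x,y\in X}$ of the Hopf subalgebra $K$, and then propagate the resulting identities to all of $K$.

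For (1), the key observation is that the convolution of two algebra maps $K\to\k$ is again an algebra map, since $\k$ is commutative; so it suffices to verify $\vartheta_z\ast\vartheta_t=\vartheta_t\ast\vartheta_{t\rhd z}$ on an algebra-generating family of $K$. Using $\Delta(e_{xy})=\sum_{s\in X}e_{xs}\ot e_{sy}$ together with the formula $\vartheta_z(e_{xy})=\delta_{z\rhd x,y}\,q_{z,x}$ supplied by the preceding lemma, both sides applied to $e_{xy}$ collapse to a single non-zero term; the Kronecker-delta supports coincide via the rack self-distributivity $(t\rhd z)\rhd(t\rhd x)=t\rhd(z\rhd x)$, and the scalar factors coincide via the 2-cocycle identity $q_{t,z\rhd x}q_{z,x}=q_{t\rhd z,t\rhd x}q_{t,x}$. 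The equality then has to be extended from the $e_{xy}$ to all of $K$: the antipode identity $\sum_s e_{xs}S(e_{sy})=\delta_{x,y}1$ says that any character of $K$ is determined on $S(e_{xy})$ by its values on the $e_{xy}$ (matrix inversion), and iterating handles every $S^n(e_{xy})$, together with the $e_{xy}$ generating $K$ as an algebra.

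For (2), I would first observe that $\Alg(K,\k)$ is in fact a \emph{group} under convolution: for any algebra map $\vartheta:K\to\k$, the composition $\vartheta\circ S$ is still an algebra map (being an anti-homomorphism into the commutative field $\k$) and provides a two-sided convolution inverse. The enveloping group $G_X$ is presented by $\{f_x\}_{x\in X}$ modulo $f_xf_y=f_{x\rhd y}f_x$, which in $G_X^{\mathrm{op}}$ becomes $f_y\cdot_{\mathrm{op}}f_x=f_x\cdot_{\mathrm{op}}f_{x\rhd y}$. Specialising (1) to $(z,t)=(y,x)$ yields exactly $\vartheta_y\ast\vartheta_x=\vartheta_x\ast\vartheta_{x\rhd y}$, so the assignment $f_x\mapsto\vartheta_x$ respects the defining relations of $G_X^{\mathrm{op}}$ and extends uniquely to the claimed group homomorphism.

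For (3), the faithfulness of $X$ gives, for any $z_1\neq z_2$ in $X$, some $x\in X$ with $y:=z_1\rhd x\neq z_2\rhd x$; then $\vartheta_{z_1}(e_{xy})=q_{z_1,x}\in\k^\times$ while $\vartheta_{z_2}(e_{xy})=0$, so the characters $\vartheta_z$ are pairwise distinct. Linear independence of $\{\vartheta_z\}_{z\in X}$ then follows from the classical Dedekind--Artin theorem that distinct algebra homomorphisms from a ring to a field are linearly independent over that field. The only mildly delicate step I foresee is the propagation argument in (1) from the $e_{xy}$ to the whole of $K$ through the antipode; after that, everything reduces to the rack and 2-cocycle identities together with one invocation of Dedekind's lemma.
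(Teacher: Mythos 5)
Your proposal is correct and follows essentially the same route as the paper: verify $\vartheta_z\vartheta_t=\vartheta_t\vartheta_{t\rhd z}$ on the $e_{xy}$ via self-distributivity and the 2-cocycle identity, deduce (2) from the presentation of $G_X$, and get (3) from pairwise distinctness plus linear independence of distinct characters (the paper phrases this as linear independence of group-likes in $K^{\circ}$, which is the same fact as Dedekind--Artin). The only cosmetic difference is that you propagate the identity to the generators $\Ss^n(e_{xy})$ by the convolution-of-characters and matrix-inversion argument, where the paper simply repeats the computation on $\Ss^n(e_{xy})$; both are fine.
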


\pf
(1) By an straightforward computation, we obtain that
\begin{align*}
\vartheta_z\vartheta_t(e_{xy})&=\begin{cases}
                               q_{zx}\,q_{t,z\rhd x},&\mbox{if $t\rhd(z\rhd x)=y$,}\\
                               0,&\mbox{otherwise;}
                               \end{cases}
                               \\
\vartheta_t\vartheta_{t\rhd z}(e_{xy})&=\begin{cases}
                               q_{tx}\,q_{t\rhd z,t\rhd x},&\mbox{if $(t\rhd z)\rhd(t \rhd x)=y$,}\\
                               0,&\mbox{otherwise.}
                               \end{cases}
\end{align*}
Since $\rhd$ is self distributive, we only have to check $q_{zx}\,q_{t,z\rhd x}=q_{tx}\,q_{t\rhd z,t\rhd x}$ and this follows from the definition of $1$-cocycle. The proof of 
$\vartheta_z\vartheta_t(\Ss^n(e_{xy}))=\vartheta_t\vartheta_{t\rhd z}(\Ss^n(e_{xy}))$, $n\in\N$, is similar. Thus (2) also follows.

If $X$ is faithful, it follows that $\vartheta_z\neq\vartheta_t$ for $z\neq t$ and thus they are linearly independent in $K^*$ since they are group-like elements in the Hopf algebra $K^\circ\subseteq K^*$ (the Sweedler dual of $K$).
\epf

\begin{rem}
Proposition \ref{prop:oK dual} gives a necessary condition to realize $W(q,X)$ over a Hopf algebra. We deduce that $W(\sgn,\cO_2^3)$ cannot be realized over $\ku{\Sn_3}$; compare it with 
Lemma \ref{lem:non-real}. 
\end{rem}

Let $\{\tilde\alpha_{C,D}\}_{C,D\in\mR'}$ and $\{\tilde E_{C,D}\}_{C,D\in\mR'}$ be the structural data of $\mJ^2(q,X)$ as Yetter-Drinfeld module over $H$. 
That is, let
\begin{align}\label{eqn:YDH structure of quadratic rel qX}
h\cdot\tilde b_C=\sum_{D\in\mR'}\tilde\alpha_{C,D}(h)\tilde b_D, \qquad \tilde b_C\mapsto \sum_{D\in\mR'}\tilde E_{C,D}\ot \tilde b_D
\end{align}
define the $H$-action and $H$-coaction, respectively. Next lemma follows as Lemma \ref{lem:structure-b} and provides formulas to compute these data in terms of the realization. 

\begin{lem}
Let $C=\{(i_2,i_1),\dots\}$ and $D=\{(j_2,j_1),\dots \}\in\mR'$. Fix $l=1,\dots,|D|$, then 
\begin{align*}
\tilde\alpha_{C,D}&=\sum_{h=1}^{|C|}\frac{\eta_h(C)}{\eta_l(D)}\mu_{i_hj_l}\ast \mu_{i_{h+1}j_{l+1}}, & 
\tilde E_{C,D}&=\sum_{h=1}^{|C|}\frac{\eta_h(C)}{\eta_l(D)} e_{i_hj_l}e_{i_{h+1}j_{l+1}}.
\end{align*}
If $(s,t)\notin D$, then $
\sum\limits_{h=1}^{|C|}\eta_h(C)\mu_{i_hs}\ast\mu_{i_{h+1}t}=0=\sum\limits_{h=1}^{|C|}\eta_h(C) e_{i_hs}e_{i_{h+1}t}$. \qed
\end{lem}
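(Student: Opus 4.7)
The plan is to mirror the proof of Lemma \ref{lem:structure-b}, adapted to the braided vector space $W(q,X)$. The key observation is that the Yetter-Drinfeld structure on $T^2(q,X)$ in the natural basis $\{w_xw_y\}_{x,y\in X}$ is determined by the matrix coefficients $\tilde\beta_{xy,st}=\mu_{xs}\ast\mu_{yt}$ and the comatrix elements $\tilde d_{xy,st}=e_{xs}e_{yt}$, that is, by the convolutions and products of the realization data of $W(q,X)$ from \eqref{eqn:action-coaction}. This reflects the general fact that the structure of a tensor product of Yetter-Drinfeld modules is built from the factors via $\ast$ and multiplication in $H$.

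First, I would compute $h\cdot\tilde b_C$ and $\lambda(\tilde b_C)$ directly from the definition $\tilde b_C=\sum_h\eta_h(C)\,w_{i_h}w_{i_{h+1}}$ together with $\tilde\beta$ and $\tilde d$; this yields expansions over all pairs $(s,t)\in X\times X$ with coefficients of the form $\sum_h\eta_h(C)\,\mu_{i_hs}\ast\mu_{i_{h+1}t}$ and $\sum_h\eta_h(C)\,e_{i_hs}\,e_{i_{h+1}t}$, respectively. On the other hand, \eqref{eqn:YDH structure of quadratic rel qX} expresses the same quantities as $\sum_D\tilde\alpha_{C,D}(h)\,\tilde b_D$ and $\sum_D\tilde E_{C,D}\otimes\tilde b_D$, which, after expanding each $\tilde b_D$ in the basis $\{w_xw_y\}$, become sums over pairs $(D,l)$ involving $\tilde\alpha_{C,D}$, $\tilde E_{C,D}$ and the scalars $\eta_l(D)$.

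Then, comparing coefficients of a fixed basis vector $w_{j_l}w_{j_{l+1}}$ with $(j_{l+1},j_l)\in D$, which is legitimate by linear independence of the $w_xw_y$ in $T^2(q,X)$, yields the claimed formulas for $\tilde\alpha_{C,D}$ and $\tilde E_{C,D}$; the freedom in choosing $l\in\{1,\dots,|D|\}$ produces the normalizing factor $\eta_l(D)^{-1}$. For $(s,t)\in X\times X$ that does not appear in any $D\in\mR'$, the vector $w_sw_t$ belongs to no $\tilde b_D$, so its coefficient in both expansions of $h\cdot\tilde b_C$ and $\lambda(\tilde b_C)$ must vanish, giving the final two identities.

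I do not expect a real obstacle: the argument is essentially bookkeeping, with no new idea beyond the one used in Lemma \ref{lem:structure-b}. The only subtle point to track carefully is the reversed order $w_{i_h}w_{i_{h+1}}$ in $\tilde b_C$ versus $v_{i_{h+1}}v_{i_h}$ in $b_C$, which is precisely what swaps the indices of $\mu$ and of $e$ in the final formulas relative to those of Lemma \ref{lem:structure-b}.
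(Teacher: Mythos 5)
Your proposal is correct and coincides with the paper's argument: the paper proves this lemma by declaring that it ``follows as Lemma \ref{lem:structure-b}'', i.e.\ by computing the action and coaction of the $\tilde b_C$ in the basis $\{w_xw_y\}$ via the tensor-square data $\mu_{xs}\ast\mu_{yt}$ and $e_{xs}e_{yt}$ and comparing coefficients, exactly as you describe. Your remark about the reversed order $w_{i_h}w_{i_{h+1}}$ being responsible for the index swap relative to Lemma \ref{lem:structure-b} is the only point of substance, and you have it right.
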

\section{Liftings of $V(X,q)$}\label{sec:pointed}

In this section we shall compute the liftings of a realization $V=V(X,q)\in\ydh$ for $(X,q)$ a rack and a cocycle from the list in \S \ref{sec:list}.

\subsection{A general setting for quadratic deformations}
The Nichols algebras associated to the pairs $(X,q)$ listed in \S \ref{sec:list} are quadratic, see Theorem \ref{thm:nichols}.
This motivates the following general definition. 

Let $(X,q)$ be a finite rack and a 2-cocycle, set $\B(X,q)$ as in \S \ref{sec:racks-bvs}. Recall the description of the generators $\{b_C\}_{C\in \mR'}$ of the space of quadratic relations 
in $\B(X,q)$ from \eqref{eqn:qrels}.  Let $H$ be a cosemisimple Hopf algebra supporting a realization $V=V(X,q)\in\ydh$. 

Recall the strategy presented in \S \ref{sec:strategy}; we shall study the set of deforming parameters $\bs\Lambda_0\subseteq\ku^{\mR'}$  for $\Gc_0=\{b_C\}_{C\in \mR'}$. We write $\bs\Lambda=\bs\Lambda_0$ for short. We follow Definition \ref{def:varphi A E} and Lemma \ref{le:gen of rel of Ek+1 mejorado} to fix
\begin{align*}
\Gc(\bs\lambda)&=\{ b_C - \lambda_C : C\in \mR'\}\subset T(V), & \mE(\bs\lambda)&=T(V)/\lg \Gc(\bs\lambda) \rg.
\end{align*}
Recall the notation from \eqref{eqn:action-bc} and Lemma \ref{lem:structure-b}.

\begin{lem}
$\mE(\bs\lambda)$ inherits an $H$-module structure from $T(V)$ when
\begin{align}\label{eqn:lambda1}
\eps(h)\lambda_C=\sum_{D\in\mR'}\alpha_{C,D}(h)\lambda_D, \qquad C\in\mR', h\in H. \qed
\end{align}
\end{lem}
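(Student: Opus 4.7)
The plan is to reduce the claim to showing that the subspace $\ku\Gc(\bs\lambda)\subset T(V)$ is $H$-stable. Since $V\in\ydh$ makes $T(V)$ into an $H$-module algebra, the two-sided ideal generated by any $H$-submodule is $H$-stable, and the quotient $\mE(\bs\lambda)=T(V)/\lg\Gc(\bs\lambda)\rg$ then automatically inherits an $H$-module algebra structure from $T(V)$.

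To set up the check, I would use two pieces of structure on $T(V)$. By \eqref{eqn:action-bc}, the subspace $\ku\{b_C:C\in\mR'\}\subset T^2(V)$ is an $H$-submodule with action $h\cdot b_C=\sum_{D\in\mR'}\alpha_{C,D}(h)\,b_D$. On the other hand, each scalar $\lambda_C$ must be interpreted as the element $\lambda_C\cdot 1\in T^0(V)\subset T(V)$, on which the $H$-action is via the counit, so $h\cdot\lambda_C=\eps(h)\lambda_C$.

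The decisive computation is then
\begin{align*}
h\cdot(b_C-\lambda_C) &= \sum_{D\in\mR'}\alpha_{C,D}(h)\,b_D - \eps(h)\lambda_C\\
&= \sum_{D\in\mR'}\alpha_{C,D}(h)\,(b_D-\lambda_D) + \Bigl(\sum_{D\in\mR'}\alpha_{C,D}(h)\lambda_D - \eps(h)\lambda_C\Bigr),
\end{align*}
and condition \eqref{eqn:lambda1} is precisely the vanishing of the bracketed residual term. Under this hypothesis $h\cdot(b_C-\lambda_C)\in\ku\Gc(\bs\lambda)$ for every $C\in\mR'$ and $h\in H$, so $\ku\Gc(\bs\lambda)$ is $H$-stable and the reduction above delivers the $H$-module algebra structure on $\mE(\bs\lambda)$.

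There is no real obstacle: the argument is a one-line verification once one remembers that $\lambda_C$ is to be treated as an element of $T^0(V)$. I would also point out, in passing, that \eqref{eqn:lambda1} is in fact necessary as well, since projecting the displayed identity onto the degree-zero component $T^0(V)$ recovers it exactly, so $H$-stability of $\ku\Gc(\bs\lambda)$ forces the condition.
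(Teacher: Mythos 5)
Your proof is correct and matches the paper's (omitted, treated-as-immediate) argument: the lemma is exactly the statement that $\ku\Gc(\bs\lambda)$ is an $H$-submodule of the $H$-module algebra $T(V)$ under \eqref{eqn:lambda1}, which is the content of condition \eqref{eqn:cond2}, and your one-line computation with the residual term is the intended verification. Your closing remark on necessity also agrees with the paper's assertion that \eqref{eqn:cond2} is equivalent to \eqref{eqn:lambda1}.
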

That is, \eqref{eqn:cond2} is equivalent to \eqref{eqn:lambda1}. Hence the set of deformation parameters \eqref{eqn:Lambda}, see also Remark \ref{rem:free}, is given by
\begin{align*}
\bs\Lambda=\{\bs\lambda=(\lambda_C)_{C\in \mR'} | \eqref{eqn:lambda1} \text{ holds and } \mE(\bs\lambda)\neq 0\}.
\end{align*}
For each $\bs\lambda\in\bs\Lambda$ we set
\begin{align}\label{eqn:H-pointed}
\mH(\bs\lambda)&=\langle b_C- \lambda_C+\sum_{\mathclap{D\in\mR'(X,q)}}\lambda_DE_{CD}\mid C\in\mR' \rg.
\end{align}
We review the results from \S \ref{sec:liftings} in this setting:
\begin{pro}\label{pro:pointed-review}
Set $\mH=\widehat{\B}_2(X,q)\# H$. If $\bs\lambda\in\bs\Lambda$, then 
\begin{enumerate}\renewcommand{\theenumi}{\alph{enumi}}
\renewcommand{\labelenumi}{(\theenumi)}
\item $\mA(\bs\lambda)\coloneqq \mE(\bs\lambda)\# H\in\Cleft(\mH)$.
\item $\mH(\bs\lambda)$ is a cocycle deformation of $\mH$.
\item $\gr_{\mathfrak{F}} \mH(\bs\lambda) =\mH$. 
\end{enumerate}
If $\B(X,q)$ is quadratic, then
 \begin{enumerate}
 \item[(d)] $\mH(\bs\lambda)$ is a lifting of $V(X,q)$.\qed
 \end{enumerate}
\end{pro}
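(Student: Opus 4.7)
The plan is to specialize the general machinery of Section~\ref{sec:liftings} to the case $k=0$ with $\Gc_0=\{b_C\}_{C\in\mR'}$. Since all elements of $\Gc_0$ are homogeneous of degree $2$, we sit inside the scope of Remark~\ref{rem:free}, so only a single recursion step is needed and the problem reduces to verifying the three conditions \eqref{eqn:cond1}--\eqref{eqn:cond3} and then invoking existing lemmas.

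For (a), I would unpack the hypothesis $\bs\lambda\in\bs\Lambda$: condition \eqref{eqn:cond1} is built into the definition of $\bs\Lambda$; condition \eqref{eqn:cond2} is equivalent to \eqref{eqn:lambda1} by the lemma preceding it, and this is also built in; finally \eqref{eqn:cond3} is automatic from Remark~\ref{rem:free} because $\Gc_0$ consists of elements of a single homogeneous degree (and the antipode is an anti-homomorphism, so $Y_0$ is free on $\Ss(\Gc_0)$). The lemma following Definition~\ref{def:varphi A E} then gives $\mA(\bs\lambda)\in\Cleft(\mH_1)$, with $\mH_1=\widehat{\B}_2(X,q)\#H=\mH$, yielding (a). Lemma~\ref{le:gen of rel of Ek+1 mejorado} (with the observation that $\gamma_0=\id$ is trivially $H$-linear) justifies the presentation of $\mE(\bs\lambda)$ used throughout.

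For (b) and (c), I would apply Lemma~\ref{le:L1} with $k=0$ to identify $L(\mA(\bs\lambda),\mH)$ with the explicit Hopf algebra $\mH(\bs\lambda)$ from \eqref{eqn:H-pointed}. That this $L$ is a cocycle deformation of $\mH$ is then the general fact on bicleft objects recalled in \S\ref{sec:cleft}, giving (b); and the identification $\gr_{\mathfrak{F}}\mH(\bs\lambda)=\mH$ is \cite[Proposition 4.14 (c)]{AAGMV}, as summarized in \S\ref{sec:on step 2}, giving (c).

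Finally, (d) is an immediate corollary of (c): if $\B(X,q)$ is quadratic, then $\widehat{\B}_2(X,q)=\B(X,q)=\B(V)$, whence $\mH=\B(V)\#H$ and (c) produces $\gr_{\mathfrak{F}}\mH(\bs\lambda)\simeq\B(V)\#H$, which is precisely the definition of a lifting. There is no substantive computation to perform; the main obstacle is purely bookkeeping, namely checking that the quadratic setting corresponds exactly to the first recursion of the general strategy and that the hypotheses of the cited lemmas are met on the nose—once this identification is in place, each item is an immediate reduction to a result already in hand.
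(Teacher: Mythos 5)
Your proposal is correct and follows essentially the same route as the paper, which states this proposition with a \qed precisely because it is the specialization to $k=0$, $\Gc_0=\{b_C\}_{C\in\mR'}$ of the results reviewed in \S\ref{sec:liftings}: Remark \ref{rem:free} for \eqref{eqn:cond3}, the unnamed cleft-object lemma for (a), Lemma \ref{le:L1} plus the bicleft/cocycle-deformation facts and \cite[Proposition 4.14 (c)]{AAGMV} for (b) and (c), and $\widehat{\B}_2(X,q)=\B(X,q)$ for (d). The only point you gloss (as does the paper) is that in (d) one passes from $\gr_{\mathfrak{F}}$ to the coradical filtration, which is licensed because $\B(V)\#H$ is coradically graded.
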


Recall the definition of the subalgebra $K\subseteq H$ as in \eqref{eqn:subalgebra}.

\begin{rem}
If we restrict to $K\subseteq H$, then \eqref{eqn:lambda1} becomes, 
\begin{align}\label{eqn:lambda1-K-1}
\lambda_C=q_{x,i_{h+1}}q_{x,i_{h}}\eta_h(C)  \lambda_D \quad \text{ if } D=x\rhd C[h].
\end{align}
\pf
Indeed, if $C=\{(i_2,i_1),\dots \}$, then by \eqref{eqn:alpha-HK}:
\begin{align*}
\lambda_C= \sum_{D\in\mR'}\delta_{D,x\rhd C[h]}\eta_h(C)q_{x,i_{h+1}}q_{x,i_{h}}  \lambda_D, \ x\in X.
\end{align*}
The claim follows since, for each $D\in\mR'$, there is at most one $x\in X$ and $h\in\I_{|C|}$, such that $D=x\rhd C[h]$. 
\epf\end{rem}

Set $\oG=\lg \ke_{xx}:x\in X\rg$ and let $\oK=\k\oG$ be the subquotient group algebra from Proposition \ref{pro:oK}. Let $(e_C)_{C\in\mR'}\in K$ be as in Remark \ref{rem:ee}.

\begin{pro}\label{pro:condition}
Let $(X,q)$ be such that $\B(X,q)$ is quadratic. Assume that  $X$ is faithful and that 
\begin{align}\label{eqn:condition}
\ke_C\neq \ke_{xx}, \qquad \forall\,x\in X, C\in\mR'.
\end{align}
Let $H$ be a Hopf algebra with realization $V=V(X,q)\in \ydh$. If $L$ is a lifting of $V$, then there is $\bs\lambda$ such that $L\simeq \mH(\bs\lambda)$.
\end{pro}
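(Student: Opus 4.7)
The plan is to invoke Lemma \ref{thm:liftings-step-3}; since $\B(X,q)$ is quadratic we already have $\ku\Gc_0 = \k\{b_C\}_{C\in\mR'} = \mJ^2(V)$, so the task reduces to proving that $\homh(\mJ^2(V), V) = 0$.

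The key input will be the $\oK$-coaction, where $\oK = \k\oG$ is the group-algebra quotient from Proposition \ref{pro:oK}. By \eqref{eqn:action-coaction} both $V$ and $\mJ^2(V)$ are $K$-subcomodules inside $T(V)\#H$, so pushing the coaction down along the quotient $K\twoheadrightarrow\oK$ turns them into $\oK$-comodules. Lemma \ref{le:action of comatrix on V} immediately gives $v_x\mapsto\ke_{xx}\ot v_x$. For $b_C$ I would use Lemma \ref{lem:structure-b} with $l=1$: after killing $\ke_{xy}$ for $x\neq y$, the only terms of $\overline{E_{C,D}}$ that survive are those with $(i_{h+1},i_h)=(j_2,j_1)$. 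Since equivalence classes in $\mR$ are disjoint, the pair $(j_2,j_1)$ belongs only to $D$, which forces $D=C$ and then $h=1$. Thus $\overline{E_{C,C}} = \ke_{i_2 i_2}\ke_{i_1 i_1} = \ke_C$ by Remark \ref{rem:ee}, so $b_C \mapsto \ke_C\ot b_C$ in $\oK\ot \mJ^2(V)$.

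Given this, the rest is a routine spectral argument. Any $f\in\homh(\mJ^2(V),V)$ is in particular $K$-colinear, hence $\oK$-colinear after quotienting. Writing $f(b_C)=\sum_{x\in X} c_x v_x$ and comparing the two sides of the colinearity equation
\begin{align*}
\sum_{x\in X} c_x\,\ke_C\ot v_x \;=\; \sum_{x\in X} c_x\,\ke_{xx}\ot v_x
\end{align*}
on the basis $\{v_x\}_{x\in X}$, one obtains $c_x(\ke_C-\ke_{xx})=0$ in $\k\oG$ for every $x$. Hypothesis \eqref{eqn:condition} ensures $\ke_C\neq\ke_{xx}$ as elements of $\oG$, so these are linearly independent in the group algebra $\k\oG$, forcing $c_x=0$. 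Hence $f(b_C)=0$ for every $C\in\mR'$ and $f=0$, as required.

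The only nontrivial obstacle I foresee is the coaction identity $b_C\mapsto\ke_C\ot b_C$: one must carefully use Lemma \ref{lem:structure-b} and the disjointness of classes in $\mR$ to collapse the double sum defining $E_{C,D}$ to a single group-like element. Once this is in place the conclusion becomes the standard observation that a Yetter--Drinfeld morphism between comodules whose support group-elements are disjoint must vanish, and Lemma \ref{thm:liftings-step-3} then produces the desired $\bs\lambda$.
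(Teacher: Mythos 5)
Your proposal is correct and follows essentially the same route as the paper: reduce via Lemma \ref{thm:liftings-step-3} to showing $\homh(\mJ^2(V),V)=0$, then use $H$-colinearity together with the formula for $E_{C,D}$ from Lemma \ref{lem:structure-b} to derive an equality $\ke_C=\ke_{xx}$ in $\oK$ contradicting \eqref{eqn:condition}. In fact your write-up makes explicit a step the paper leaves implicit, namely that after passing to the quotient $\oK$ the comatrix element $\overline{E_{C,D}}$ collapses to $\delta_{C,D}\,\ke_C$ because distinct classes in $\mR$ are disjoint and the pairs within a class are pairwise distinct.
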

\pf
Following Lemma \ref{thm:liftings-step-3}, we need to show that $\homh(\mJ^2(V),V)=0$. Let $\varphi\in \homh(\mJ^2(V),V)$; set
\[
\varphi(b_C)=\sum_{x\in X}t_{C,x}v_x, \qquad (t_{C,x})_{x\in X}\in\k^X.
\]
Since $\varphi$ is $H$-colinear, we see that for every $h\in H$, $y\in X$ and $C\in\mR'$:
\begin{align*}
\sum_{x\in X} t_{C,x}e_{xy}&=\sum_{D\in\mR'}t_{D,y}E_{C,D}=\sum_{D\in\mR'}\sum_{h=1}^{|C|}t_{D,y}\frac{\eta_h(C)}{\eta_l(D)} e_{i_{h+1}j_{l+1}}e_{i_hj_l},
\end{align*}
using Lemma \ref{lem:structure-b}. This implies an equality in \eqref{eqn:condition}, a contradiction.
\epf

\begin{cor}\label{cor:condition}
Let $X$ be a finite indecomposable and faithful rack. Let $q$ be a 2-cocycle on $X$ and set $V=V(X,q)$. Let $H$ be a Hopf algebra with a realization $V\in\ydh$. Assume that either
\begin{enumerate}\renewcommand{\theenumi}{\alph{enumi}}
\renewcommand{\labelenumi}{(\theenumi)}
\item $q$ is constant; or
\item $X=\mO_2^4$, $q=\chi$ as in \eqref{eqn:MS}.
\end{enumerate}
If $L$ is a lifting of $V\in\ydh$, then there is $\bs\lambda$ such that $L\simeq \mH(\bs\lambda)$.
\end{cor}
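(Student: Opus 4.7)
The plan is to reduce the statement to Proposition~\ref{pro:condition}, whose hypotheses are three: quadraticity of $\B(X,q)$, faithfulness of $X$, and condition \eqref{eqn:condition} asserting that $\ke_C\neq\ke_{xx}$ in $\oG$ for every $x\in X$ and every $C\in\mR'(X,q)$. Faithfulness is part of the standing assumptions. Quadraticity is covered by Theorem~\ref{thm:nichols} in all cases relevant to the statement; in case (a), the remark following \S\ref{subsec:quadratic} moreover forces the constant value of $q$ to be $-1$ as soon as $\B(X,q)$ is required to be quadratic and finite-dimensional (recalling that $X$ is a quandle whenever it comes from a conjugacy class). The crux of the proof is therefore to establish \eqref{eqn:condition}.

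I would argue by contradiction: assume $\ke_C=\ke_{xx}$ in $\oG$ for some $x\in X$ and some $C=\{(i_2,i_1),\dots\}\in\mR'(X,q)$. By Proposition~\ref{pro:oK} one has $V\in{}^{\oG}_{\oG}\mathcal{YD}$, so both elements induce the same operator on $V$. Factoring $\ke_C=\ke_{i_2i_2}\ke_{i_1i_1}$ via Remark~\ref{rem:ee} and computing each factor's action by means of Lemma~\ref{le:action of comatrix on V}, evaluation on an arbitrary basis vector $v_y$ yields the identity
\begin{align*}
q_{x,y}\,v_{x\rhd y}\;=\;q_{i_1,y}\,q_{i_2,\,i_1\rhd y}\,v_{i_2\rhd(i_1\rhd y)},\qquad y\in X,
\end{align*}
which separates into a rack-theoretic condition $\phi_x=\phi_{i_2}\circ\phi_{i_1}$ in $\Aut(X)$ and a scalar condition $q_{x,y}=q_{i_1,y}\,q_{i_2,\,i_1\rhd y}$ for all $y\in X$.

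To close each case I would then exploit exactly one of these two conditions. In case (a), writing $q\equiv\omega\in\k^\times$, the scalar equation collapses to $\omega=\omega^2$, forcing $\omega=1$; this contradicts our quadraticity assumption on $\B(X,q)$ (in particular for $\omega=-1$ the equality reads $-1=1$). In case (b), $X=\mO_2^4$ generates $\Sn_4$, so the equality of conjugations $\phi_x=\phi_{i_2}\phi_{i_1}$ lifts to $\Sn_4$ and forces $x^{-1}i_2 i_1\in Z(\Sn_4)=\{e\}$, i.e.\ $x=i_2 i_1$; but a product of two transpositions in $\Sn_4$ is always a $3$-cycle, a double transposition or the identity, never a transposition, contradicting $x\in\mO_2^4$. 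Thus \eqref{eqn:condition} holds in both cases and Proposition~\ref{pro:condition} immediately yields the claim. The main obstacle is precisely this verification of \eqref{eqn:condition}: case (a) is transparent once one extracts the scalar information, whereas case (b) genuinely relies on the concrete embedding of the rack in $\Sn_4$ and the triviality of its center.
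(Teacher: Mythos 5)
Your proof is correct and follows the paper's overall route --- reduce to Proposition~\ref{pro:condition} and verify \eqref{eqn:condition} --- but your verification of \eqref{eqn:condition} is genuinely different and self-contained, so it is worth comparing. For (a), the paper simply cites \cite[Lemma 3.7 (c)]{GV}; you rederive the statement by letting $\ke_C=\ke_{i_2i_2}\ke_{i_1i_1}$ and $\ke_{xx}$ act on $V$ (legitimate, since Proposition~\ref{pro:oK} makes $V$ an $\oG$-module and $\ke_{xx}\cdot v_y=q_{x,y}v_{x\rhd y}$ by Lemma~\ref{le:action of comatrix on V}), extracting $\omega=\omega^2$ and hence $\omega=1$. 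The only soft spot is the endgame: the corollary as stated does not literally exclude $\omega=1$, and ruling it out requires the implicit hypotheses that $X$ is a quandle and $\B(X,q)$ is quadratic and finite-dimensional, as in the Remark of \S\ref{subsec:quadratic}; this caveat is hidden in the paper's citation and is harmless for the pairs of \S\ref{sec:list}, but you should state it as an explicit hypothesis rather than as ``our quadraticity assumption.'' For (b), the paper evaluates the characters $\chi_z$ at $g_z$ and at $g_xg_y$ after a case-by-case normalization; you instead use only the rack-theoretic half of your identity, namely $\phi_x=\phi_{i_2}\phi_{i_1}$, together with the faithfulness of the conjugation action of $\Sn_4$ on $\mO_2^4$ (trivial center) to force $x=i_2i_1$ in $\Sn_4$, which is impossible since a product of two transpositions is never a transposition. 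This is cleaner, treats all classes in $\mR'$ uniformly --- including the pairs of commuting transpositions, which the paper's inspection does not explicitly address --- and does not use the cocycle $\chi$ at all, so it in fact covers $\mO_2^4$ with an arbitrary cocycle.
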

\pf
We need to check \eqref{eqn:condition} in $\oG$. Notice that 
\[
\ke:X:\to \oG, \qquad x\mapsto g_x\coloneqq \ke_{xx},
\]
is injective, as $X$ is faithful. Then (a) is \cite[Lemma 3.7 (c)]{GV}. For (b), we proceed by inspection. First, assume that $g_x^2=g_y$. In particular, $x\neq y$ and $g_{x\rhd y}=g_xg_yg_x^{-1}=g_x^2=g_y$, so $x\rhd y=y$. This contradicts \cite[Lemma 3.7 (a)]{GV}, which establishes that in this case $g_x^2\neq g_y$.

Now, assume that $x\rhd y\neq y$. We may set, without lost of generality, that $x=(12)$, $y=(1a)$, $a=3,4$. Moreover, we can set $a=3$, so $z=(b4)$, $b=1,2,3$. Once again, we have that $\chi_z(g_z)=-1$ and $\chi_z(g_xg_y)=\chi_z(g_y)\chi_{y\rhd z}(g_x)=1$. This proves the claim.
\epf

\subsubsection{Principal realizations} If the realization is principal and $(g_C)_{C\in\mR'}\in G(H)$ is as in Remark \ref{rem:gC}, then 
\begin{align*}
\mH(\bs\lambda)&=T(V)\# H/\lg b_C - \lambda_C(1-g_C) : C\in \mR' \rg.
\end{align*}
If $G$ is a finite group and $H=\k G$, then $\mH(\bs\lambda)$ was introduced in \cite[Definition 3.6]{GG}, provided that \eqref{eqn:condition} holds.

In this case, a complete description of the isomorphism classes is achieved via standard arguments, see \cite[\S 6.]{GG}.
\begin{lem}\label{lem:isos}
Let $X$ be a faithful rack. Let $q$ be a 2-cocycle on $X$ such that $\B(X,q)$ is quadratic. 
Let $H$ be a cosemisimple Hopf algebra with a principal realization $V=V(X,q)\in\ydh$.
Then  $\mH(\bs\lambda)\simeq \mH(\bs\lambda')$ if and only if $\bs\lambda=\bs\lambda'=\bs0\in \k^{\mR'}$ or $[\bs\lambda]=[\bs\lambda']\in \P^{|\mR'|-1}$.\qed
\end{lem}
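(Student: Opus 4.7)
The plan is the standard two-direction argument, modeled on \cite[\S 6]{GG}: for the ``if'' direction I would exhibit an explicit rescaling automorphism, and for the converse I would pass to the associated graded Hopf algebra and extract the scaling from the coradical filtration data.

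Suppose first that $[\bs\lambda] = [\bs\lambda']$, so $\bs\lambda' = \mu\bs\lambda$ for some $\mu\in\k^\times$. Since $\k$ is algebraically closed, pick $t\in\k^\times$ with $t^2 = \mu^{-1}$ and define $\phi_t\colon T(V)\#H\to T(V)\#H$ by $\phi_t(v_x)=tv_x$ for $x\in X$ and $\phi_t|_H=\id_H$. Uniform scaling on $V$ commutes with the Yetter--Drinfeld structure, so $\phi_t$ is a graded Hopf algebra automorphism; since each $b_C$ is homogeneous of degree two, $\phi_t(b_C)=t^2 b_C$, while $\phi_t$ fixes $\lambda_C(1-g_C)$. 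Hence $\phi_t$ sends the ideal defining $\mH(\bs\lambda)$ onto that defining $\mH(\bs\lambda')$ and descends to the desired isomorphism. The degenerate case $\bs\lambda=\bs\lambda'=\bs 0$ is trivial.

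For the converse, let $\phi\colon \mH(\bs\lambda)\to\mH(\bs\lambda')$ be a Hopf isomorphism. Since $H$ is the coradical on both sides (by Proposition~\ref{pro:pointed-review}), $\phi|_H$ is a Hopf automorphism of $H$, and after composing with a Hopf automorphism of $\mH(\bs\lambda')$ we may assume $\phi|_H=\id_H$. The associated graded map $\gr\phi\colon \B(V)\#H\to\B(V)\#H$ then restricts to a Yetter--Drinfeld automorphism of $V$. Because the realization is principal and $X$ is faithful, the group-likes $\{g_x\}_{x\in X}$ are pairwise distinct, so each line $\k v_x$ is the $g_x$-isotypic component of the $H$-coaction, and $\gr\phi(v_x) = t_x v_x$ for some $t_x\in\k^\times$.

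The final step compares defining relations. Since $\phi(b_C - \lambda_C(1-g_C))$ lies in the ideal $\langle b_D - \lambda'_D(1-g_D)\rangle$, its top-degree component must satisfy
\begin{align*}
\sum_{h=1}^{|C|}\eta_h(C)\, t_{i_{h+1}}t_{i_h}\, v_{i_{h+1}}v_{i_h} \;\in\; \ku\{b_D\}_{D\in\mR'} \subset \mJ^2(V),
\end{align*}
which, by linear independence of the basis described in \S\ref{subsec:quadratic}, forces $t_{i_{h+1}}t_{i_h}$ to be independent of $h$ for every $C\in\mR'$. Chaining these constraints across the classes of $\mR'$ using the rack relations $f_xf_y = f_{x\rhd y}f_x$ yields a single scalar $t$ with $t_x = t$ for all $x\in X$; matching the coefficient of $(1-g_C)$ on both sides then gives $\lambda'_C = t^2\lambda_C$. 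The main obstacle is this last propagation step, which implicitly relies on a connectedness property of $X$ transmitted through the $\mR'$-classes; in the applications of interest (conjugacy-class racks such as $\cO_2^n$ and $\cO_4^4$) the indecomposability of $X$ makes this step routine.
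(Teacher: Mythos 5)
The paper offers no proof of this lemma at all --- it simply appeals to the ``standard arguments'' of \cite[\S 6]{GG} --- and your outline follows exactly that standard route, so the question is whether your gaps are fillable. The ``if'' direction is fine. In the converse, however, there are two unflagged problems besides the propagation issue you flag yourself. The most serious is the reduction ``after composing with a Hopf automorphism of $\mH(\bs\lambda')$ we may assume $\phi|_H=\id_H$'': a Hopf automorphism $\tau$ of $H$ need not extend to $\mH(\bs\lambda')$, and deciding when it does is essentially equivalent to the statement being proved, so this step is circular. The correct move is to keep $\tau$, observe that $\phi$ carries the two-dimensional skew-primitive spaces $P_{g_x,1}$ to $P_{\tau(g_x),1}$, so that faithfulness of the principal realization forces $\tau(g_x)=g_{\sigma(x)}$ for a permutation $\sigma$ of $X$ compatible with $(\rhd,q)$; the relation-matching then only yields $\lambda_C=t^2\lambda'_{\sigma(C)}$ for all $C$, i.e.\ $[\bs\lambda]=[\sigma^*\bs\lambda']$. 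To land on $[\bs\lambda]=[\bs\lambda']$ one must still argue that $\sigma$ acts trivially on the parameter space, which is where the equivariance condition \eqref{eqn:lambda1} cutting out $\bs\Lambda$ has to be invoked; your shortcut hides exactly this point.

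The second gap is that in a lifting one has $\phi(v_x)=t_xv_x+s_x(1-g_x)$ rather than $t_xv_x$, so ``matching the coefficient of $(1-g_C)$'' tacitly sets $s_x=0$ and discards the degree-one cross terms produced when expanding $\phi(b_C)$. This is fillable --- applying $\ad(g_x)$ to $\phi(v_x)$ gives $s_x(q_{x,x}-1)=0$, and $q_{x,x}\neq 1$ in the quadratic finite-dimensional setting, whence $s_x=0$ --- but it must be said, since it is precisely the lower filtration terms that produce the scalar $\lambda'_C$. Finally, for $t_x\equiv t$ a cleaner route than chaining through the classes of $\mR'$ (which only links elements sharing a class of size at least three) is to use that $\gr\phi|_V$ is a morphism in $\ydh$: $H$-linearity forces $t_x=t_{z\rhd x}$ because $\mu_{x,z\rhd x}(e_{zz})=q_{z,x}\neq 0$ by Lemma \ref{le:action of comatrix on V}, and indecomposability of $X$ then yields a single scalar. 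As you correctly suspect, faithfulness alone, as literally stated in the lemma, is not what makes this step work.
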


\subsection{Liftings associated to the symmetric groups}

We fix now $(X,q)$ as in \S \ref{sec:list}. That is, $X=\mO_2^4$ or $\mO_4^4$ and $q\equiv-1$ or $q=\chi$ as in \eqref{eqn:MS}.
We set $V=V(X,q)$ and let $H$ be a cosemisimple Hopf algebra with a realization $V\in\ydh$.

\begin{lem}\label{lem:nonzero}
If \eqref{eqn:lambda1} holds, then $\mE(\bs\lambda)\neq 0$. Hence
\[
\bs\Lambda=\{\bs\lambda=(\lambda_C)_{C\in \mR'} |\, \eps(h)\lambda_C=\sum_{D\in\mR'}\alpha_{C,D}(h)\lambda_D, \forall\, C\in\mR', h\in H\}.
\]
\end{lem}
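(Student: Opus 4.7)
The plan is to reduce the statement to a dimension check on finitely many explicit algebras and then appeal to the Appendix. First, observe that since $\Gc_0$ consists of elements of degree two, Remark \ref{rem:free} supplies condition \eqref{eqn:cond3} for free, and \eqref{eqn:cond2} is precisely the content of \eqref{eqn:lambda1}. Hence the only remaining condition for membership in $\bs\Lambda$ is that $\mE(\bs\lambda)\neq 0$, which is exactly what the lemma asks to prove; the ``hence'' in the statement will then be automatic.

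Next I would use the constraint \eqref{eqn:lambda1-K-1} obtained by restricting \eqref{eqn:lambda1} to the subalgebra $K\subseteq H$: it propagates the values of the scalars $\lambda_C$ along the orbits of the action $(x,C[h])\mapsto x\rhd C[h]$ on $\mR'$. Because the racks $\mO_2^4$ and $\mO_4^4$ are indecomposable and faithful, the number of independent parameters is small, and the resulting parametric family for each of the three cases of $(X,q)$ in \S\ref{sec:list} matches (up to the trivial orbit) the parameter sets $\gA$ and $\widetilde\gA$ of the three families $\cH_{[\ba]}$, $\cH^\chi_{[\ba]}$, $\widetilde\cH_{[\ba]}$ of the Appendix.

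The remaining step is to verify, for each such parameter choice, that the ideal $\langle b_C-\lambda_C:C\in\mR'\rangle$ does not swallow $T(V)$. Equip $\mE(\bs\lambda)$ with the filtration induced by the tensor grading of $T(V)$; its associated graded is a quotient of the quadratic Nichols algebra $\B(X,q)$, whose dimension is known from Theorem \ref{thm:nichols} and the references therein. It suffices to show that this quotient map is an isomorphism, i.e.\ the deformed relations do not introduce any lower-degree relation, so that the associated graded has the full dimension of $\B(X,q)$ and a fortiori $\mE(\bs\lambda)\neq 0$.

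The main obstacle is precisely this non-collapse check: an explicit Gröbner-basis computation in each of the three cases. It is intractable by hand, especially in the $(\mO_4^4,-1)$ case where $\dim\B(X,q)=576$. As announced in \S\ref{sec:intro}, this is carried out with \cite{GAP} and the package \cite{GBNP} in the Appendix, where the non-triviality of each of the three parametric families of algebras is established. Combined with the propagation argument above, this yields $\mE(\bs\lambda)\neq 0$ for every $\bs\lambda$ satisfying \eqref{eqn:lambda1}, and the displayed description of $\bs\Lambda$ then follows.
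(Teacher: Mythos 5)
Your proposal is correct and follows essentially the same route as the paper: condition \eqref{eqn:lambda1-K-1} forces $\lambda_C$ to depend only on $|C|$ (with $\lambda_2=0$ when $q=\chi$), so each $\mE(\bs\lambda)$ is identified with one of the algebras $\mE_{\bs\alpha}(\mu_1,\mu_2)$, $\mE^\chi_{\bs\alpha}(\mu)$ or $\widetilde{\mE}_{\bs\beta}(\mu_1,\mu_2)$ of the Appendix, whose non-vanishing is exactly Proposition \ref{pro:nonzero}, established by the \texttt{GAP} Gr\"obner-basis computation. The only slip is in the naming: the relevant parametric families are these Appendix algebras (with $\alpha_{(ij)}\equiv\lambda_1$ and $(\mu_1,\mu_2)=(\lambda_2,\lambda_3)$, etc.), not the copointed data $\gA$, $\widetilde\gA$ and $\cH_{[\ba]}$ from \S\ref{sec:cop-S4}, which carry the extra constraint $\sum_x\lambda_x=0$ and play no role here.
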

\pf
This is a particular case of Proposition \ref{pro:nonzero}, 
as the algebras $\mE(\bs\lambda)$ are particular examples of the algebras considered in {\it loc.~cit.} We develop this in detail:
Recall that  the conjugacy classes $\{C\}_{C\in \mR'}$ that parametrize the space of relations of each Nichols algebra $\B(X,q)$ have either one, two or three elements. 
Now, \eqref{eqn:lambda1-K-1} implies that, if $|C|=|D|$, then $\lambda_C=\lambda_D$. We write $\lambda_i=\lambda_{|C|}$ if $|C|=i$, $i=1,2,3$. Moreover, when $q=\chi$, it follows that $\lambda_2=0$. Thus, $\mE(\bs\lambda)$ coincides with 
\begin{itemize}
\item $\mE_{\bs\alpha}(\lambda_2,\lambda_3)$ with $\alpha_{(ij)}=\lambda_1$, $(ij)\in\mO_2^n$; when $(X,q)=(\mO_2^4,-1)$;
\item $\mE^\chi_{\bs\alpha}(\lambda_3)$ with $\alpha_{(ij)}=\lambda_1$, $(ij)\in\mO_2^n$; when $(X,q)=(\mO_2^4,\chi)$;
\item $\widetilde{\mE}_{\bs\beta}(\lambda_1,\lambda_3)$ with $\beta_{\sigma}=\lambda_2$, $\sigma\in\mO_4^4$; when $(X,q)=(\mO_4^4,-1)$. 
\end{itemize}
This shows the claim.
\epf
\begin{exa}\label{exa:pointed}
Let us fix $(X,q)=(\mO_2^4,-1)$ and let $H$ be a cosemisimple Hopf algebra with a realization $V(X,q)\in\ydh$. Let $(e_{\sigma,\tau})_{\sigma,\tau\in X}\in H$ be as in \eqref{eqn:action-coaction}. We consider the subsets $X(2), X(3)\subset X\times X$:
\begin{align*}
X(2)&=\{((12),(34)), ((13),(24)),((23),(14))\},\\
X(3)&=\{((ij),(ik))\}_{i<j<k}\cup \{((ik),(ij))\}_{i<j<k}, 
\end{align*}
so $\mR'$ is in bijection with $X\times X(2)\times X(3)$, cf.~\S\ref{sec:examples}. Let $\bs\lambda\in\bs\Lambda$: recall that in this case $\bs\lambda=(\lambda_C)_{C\in\mR'}$ identifies with a triple $(\lambda_1,\lambda_2,\lambda_3)\in\bs\Lambda$, via $\lambda_C=\lambda_i$ if $|C|=i$. 

Thus $\mH(\bs\lambda)$ is the quotient of $T(V)\# H$ modulo the ideal generated by:
\begin{align*}
&a_\sigma^2=\lambda_1\left(1-\sum_{\mu\in X}e_{\sigma,\mu}^2\right)-\lambda_2\sum_{\mathclap{(\mu,\nu)\in X(2)}}e_{\sigma,\mu}e_{\sigma,\nu}-\lambda_3\sum_{\mathclap{(\mu,\nu)\in X(3)}}e_{\sigma,\mu}e_{\sigma,\nu};\\
&a_\sigma a_\tau+a_\tau a_\sigma=\lambda_2\left(1-\sum_{\mathclap{(\mu,\nu)\in X(2)}}e_{\sigma,\mu}e_{\tau,\nu}\right)-\lambda_1\sum_{\mu\in X}e_{\sigma,\mu}e_{\tau,\mu}-\lambda_3\sum_{\mathclap{(\mu,\nu)\in X(3)}}e_{\sigma,\mu}e_{\tau,\nu};\\
&a_\sigma a_\upsilon+a_{\sigma\upsilon\sigma} a_\sigma + a_\upsilon a_{\sigma\upsilon\sigma} =\lambda_3\left(1-\sum_{\mathclap{(\mu,\nu)\in X(3)}}e_{\sigma,\mu}e_{\upsilon,\nu}\right)-\lambda_1\sum_{\mu\in X}e_{\upsilon,\mu}e_{\upsilon,\mu}\\
&\hspace*{9.7cm} -\lambda_2\sum_{\mathclap{(\mu,\nu)\in X(2)}}e_{\sigma,\mu}e_{\upsilon,\nu};
\end{align*}
for all $ \sigma\in X$, $(\sigma,\tau)\in X(2)$ and $ (\sigma,\upsilon)\in X(3)$.
\end{exa}

\section{Liftings of $W(q,X)$}\label{sec:copointed}

We fix a rack and $2$-cocycle $(X,q)$ and $W(q,X)$ the associated braided vector space as in \eqref{eqn:rack-br-dual}.
In this section we study the liftings of $W(q,X)$.

\subsection{On the strategy applied to $\B(q,X)$}

We fix a cosemisimple Hopf algebra $H$ with realization $W=W(q,X)\in \ydh$.

Recall the definition of the subalgebra $K\subseteq H$ in \eqref{eqn:subalgebra}. 
We will assume throughout this section that $H=K$, i.e. that $H$ is generated by the comatrix elements attached to the realization $W\in\ydh$.
This is a technical assumption that is only present with the purpose of giving a more concrete, that is in terms of $(X,q)$, description of the liftings $\mH(\bs\lambda)$. Besides, this assumption is satisfied in the examples we target to study.

We present general results that will allow us to apply the strategy of \S \ref{sec:liftings} to find the liftings of $W\in \ydh$. As in \S\ref{sec:pointed}, we explore the first 
iteration of the strategy with $\Gc_0=\{\tilde b_C\}_{C\in \mR'}$ the set generators of the quadratic relations given in Corollary \ref{pro:quadratic-dual}. For each sequence of scalars 
$\bs\lambda=(\lambda_C)_{C\in\mR'}$ we set 
$$
\Gc(\bs\lambda)=\{\tilde b_C - \lambda_C : C\in \mR'\}\subset T(W).
$$
\begin{lem}\label{le:step 1 para copunteada}
$\ku\Gc(\bs\lambda)\subset T(W)$ is an $H$-submodule if and only if 
\begin{align}\label{eq:copointed condition-1}
\lambda_C&=0 &\text{if } \quad \delta_{x,y}&\neq \delta_{i_{2}\rhd(i_1\rhd x),y}\,q_{i_1,x}\,q_{i_{2},i_1\rhd x}
\end{align}
for some $x,y\in X$. Then the ($0$th) set of deforming parameters is
\begin{align}\label{eq:Lambda para BqX}
\bs\Lambda=\{(\lambda_C)_{C\in\mR'}|\mE(\bs\lambda)\neq 0 \text{ and \eqref{eq:copointed condition-1} holds} \}. 
\end{align}
\end{lem}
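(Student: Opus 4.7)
The plan is to translate the $H$-submodule condition \eqref{eqn:cond2} on $\k\Gc(\bs\lambda)$ into a concrete condition on the scalars $\lambda_C$. First I would observe that $\k\{\tilde b_C\}_{C\in\mR'}$ is already $H$-stable in $T(W)$ via the formulas \eqref{eqn:YDH structure of quadratic rel qX}, while $\k\cdot 1 \subset T(W)$ is $H$-stable via $\eps$. For every $h \in H$,
\[
h \cdot (\tilde b_C - \lambda_C) = \sum_{D \in \mR'} \tilde\alpha_{C,D}(h)\, \tilde b_D - \eps(h)\, \lambda_C.
\]
Since the quadratic and scalar components live in distinct graded pieces of $T(W)$, matching coefficients shows that $\k\Gc(\bs\lambda)$ is $H$-invariant if and only if
\[
\sum_{D \in \mR'} \tilde\alpha_{C,D}(h)\, \lambda_D = \eps(h)\, \lambda_C, \qquad \forall\, C \in \mR',\ h \in H.
\]
By the standing hypothesis $H = K$, it suffices to verify this identity on the algebra generators $\{e_{xy}\}_{x,y\in X}$.

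Next I would plug $h = e_{xy}$ and compute $\tilde\alpha_{C,D}(e_{xy})$ using the explicit formula preceding the lemma together with $\mu_{zt}(e_{xy}) = \delta_{z,t}\,\delta_{z\rhd x, y}\, q_{zx}$ and $\Delta(e_{xy}) = \sum_z e_{xz} \ot e_{zy}$. The convolution product yields
\[
\mu_{i_h j_1} * \mu_{i_{h+1} j_2}(e_{xy}) = \delta_{i_h, j_1}\, \delta_{i_{h+1}, j_2}\, q_{i_h, x}\, q_{i_{h+1}, i_h \rhd x}\, \delta_{i_{h+1} \rhd (i_h \rhd x),\, y},
\]
so the only class $D$ with $\tilde\alpha_{C,D}(e_{xy}) \neq 0$ is $D = C$. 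Fixing the representative with $l=1$ we obtain
\[
\tilde\alpha_{C,C}(e_{xy}) = \delta_{i_2 \rhd (i_1 \rhd x),\, y}\; q_{i_1, x}\; q_{i_2,\, i_1 \rhd x}.
\]
Combined with $\eps(e_{xy}) = \delta_{x,y}$, the identity collapses to $\lambda_C\bigl(\delta_{i_2 \rhd (i_1 \rhd x), y}\, q_{i_1, x}\, q_{i_2, i_1 \rhd x} - \delta_{x,y}\bigr) = 0$ for all $x,y \in X$, which is precisely \eqref{eq:copointed condition-1}.

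To conclude \eqref{eq:Lambda para BqX} I would assemble the three defining conditions of $\bs\Lambda_0$ from Definition \ref{def:varphi A E}: \eqref{eqn:cond1} is the nonvanishing $\mE(\bs\lambda) \neq 0$; \eqref{eqn:cond2} is the $H$-invariance just characterized; and \eqref{eqn:cond3} is automatic by Remark \ref{rem:free}, since the generators $\tilde b_C$ are all homogeneous of degree $2$, so $Y_0$ is free and $\varphi_{\bs\lambda}$ extends to an algebra map.

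The main obstacle I anticipate is the combinatorics of the cyclic representatives in the formula for $\tilde\alpha_{C,D}$: one must argue that on a comatrix element $e_{xy}$ the sum over $h$ reduces to the single term with $h = 1$, i.e.\ that only the pair $(i_2, i_1)$ matches $(j_2, j_1)$. In our setting the classes $C \in \mR'$ have length at most three and carry no nontrivial cyclic symmetry (cf.~\S \ref{sec:examples}), so this reduction is unambiguous and the condition takes the compact form \eqref{eq:copointed condition-1}.
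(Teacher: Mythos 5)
Your proposal is correct and follows essentially the same route as the paper: the paper computes $e_{xy}\cdot(w_{i_h}w_{i_{h+1}})$ directly and uses the 2-cocycle identity to see that the scalar is independent of $h$, whereas you reach the same coefficient $\delta_{i_2\rhd(i_1\rhd x),y}\,q_{i_1,x}\,q_{i_2,i_1\rhd x}$ by evaluating the precomputed matrix coefficients $\tilde\alpha_{C,D}$ at $e_{xy}$ (and your worry about the sum over $h$ is harmless, since the pairs $(i_{h+1},i_h)$ are distinct elements of $C$ by definition). The graded-component matching for the ``only if'' direction and the assembly of $\bs\Lambda$ via Remark \ref{rem:free} agree with the paper's argument.
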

\begin{proof}
Let $\bs\Lambda'$ be the set of families $\bs\lambda=(\lambda_C)_C$ satisfying \eqref{eq:copointed condition-1}.
Let $(i_{h+1},i_h)$ in $C$ and $x,y\in X$. Then
\begin{align*}
e_{xy}\cdot(w_{i_h}w_{i_{h+1}})=\delta_{i_{h+1}\rhd(i_h\rhd x),y}\,q_{i_h,x}\,q_{i_{h+1},i_h\rhd x}\,w_{i_h}w_{i_{h+1}}.
\end{align*}
As $q$ is a 2-cocycle, we have that
\begin{align*}
q_{i_h,x}\,q_{i_{h+1},i_h\rhd x}&=q_{i_{h+1}\rhd i_h,i_{h+1}\rhd x}\,q_{i_{h+1},x}=q_{i_{h+2},i_{h+1}\rhd x}\,q_{i_{h+1},x}.
\end{align*}
On the other hand, notice that $i_{h+1}\rhd(i_h\rhd x)=i_{2}\rhd(i_1\rhd x)$.
Therefore 
\[e_{xy}\cdot(\tilde b_C-\lambda_C)=\delta_{i_{2}\rhd(i_1\rhd x),y}\,q_{i_1,x}\,q_{i_{2},i_1\rhd x}\,\tilde b_C-\delta_{x,y}\,\lambda_C.\]

Thus, it is clear that $\ku\Gc(\bs\lambda)$ is a $H$-module if $\bs\lambda\in\bs\Lambda'$. Instead, if $\bs\lambda\not\in\bs\Lambda'$ and $\ku\Gc(\bs\lambda)$ is a $H$-submodule, we see that $\tilde b_C$ and $1$ belong to $\ku\Gc(\bs\lambda)$ by acting by a suitable $e_{xy}$. However, as $\{\tilde b_C\}_{C\in\mR'}$ is linearly independent, $\tilde b_C, 1\notin\ku\Gc(\bs\lambda)$. By this contradiction we finish the proof.
\end{proof}

Next lemma concerns \texttt{Step 3}, recall \S \ref{sec:on step 3}.

\begin{lem}\label{le:step 3 para copunteada}
Let $f\in\homh(\mJ^2(W), W)$ and assume that $X$ is faithful. If $C=\{(i_2,i_1), \dots \}\in\mR'$, then $f(\tilde b_C)=\nu_C\, w_{j_C}$ for some $j_C\in X$ and $\nu_C\in\ku$. 
Moreover, if $\nu_C\neq0$, then
\begin{align}
\notag\delta_{j_C\rhd x,y}\,q_{j_C,x}&=\delta_{i_{2}\rhd(i_1\rhd x),y}\,q_{i_1,x}\,q_{i_{2},i_1\rhd x}\quad\forall x,y\in X\quad\mbox{and}\\
\label{eq:condiciones sobre f}
  e_{j_C,i}&=\begin{cases}
                \tilde E_{C,D},&\mbox{if $i=j_D$,}\\
                0,&\mbox{if $i\neq j_D$ for all $D\in\mR'$.}
               \end{cases}
  \end{align}
Reciprocally, every pair $({{\bf j},\bs\nu})$ of families $\bs\nu=(\nu_C)_{C\in\mR'}\in\k^{\mR'}$, ${\bf j}=(j_C)_{C\in\mR'}$, $j_C\in X$, satisfying \eqref{eq:condiciones sobre f} define a morphism $f=f_{{\bf j},\bs\nu}\in \homh(\mJ^2(W), W)$ by setting $f(\tilde b_C)=\nu_C\, w_{j_C}$.
\end{lem}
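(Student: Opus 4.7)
The proof naturally splits into a module half and a comodule half, since $f\in\homh(\mJ^2(W),W)$ must respect both the $H$-action and the $H$-coaction; the forward direction yields the two conditions in \eqref{eq:condiciones sobre f}, and reversing the arguments gives the converse.

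For the module half, I would expand $f(\tilde b_C)=\sum_{z\in X}t_{C,z}\,w_z$ and exploit that the comatrix basis $\{e_{xy}\}$ acts diagonally on both sides. By the computation performed inside the proof of Lemma \ref{le:step 1 para copunteada},
\[
e_{xy}\cdot \tilde b_C=\delta_{i_{2}\rhd(i_1\rhd x),y}\,q_{i_1,x}\,q_{i_{2},i_1\rhd x}\,\tilde b_C,
\]
while from the unnumbered lemma on the action of $K$ on $W(q,X)$ one has $e_{xy}\cdot w_z=\delta_{z\rhd x,y}\,q_{z,x}\,w_z$. Equating $f(e_{xy}\cdot\tilde b_C)=e_{xy}\cdot f(\tilde b_C)$ and reading off the coefficient of $w_z$ shows that, whenever $t_{C,z}\neq 0$, the first identity in \eqref{eq:condiciones sobre f} holds for every $x,y\in X$. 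Specializing $y=z\rhd x$ reduces this to $\phi_z=\phi_{i_2}\phi_{i_1}$ as self-maps of $X$; faithfulness of $X$ then pins down at most one such $z$, so at most one $t_{C,z}$ is nonzero and $f(\tilde b_C)=\nu_C w_{j_C}$ as claimed.

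For the comodule half, comparing $(\id\ot f)\circ\lambda$ and $\lambda\circ f$ at $\tilde b_C$ gives
\[
\sum_{D\in\mR'}\nu_D\,\tilde E_{C,D}\ot w_{j_D}=\nu_C\sum_{y\in X}e_{j_C,y}\ot w_y,
\]
so that, coefficient-wise, $\nu_C\,e_{j_C,y}=\sum_{D:\,j_D=y}\nu_D\,\tilde E_{C,D}$ for every $y\in X$. When $\nu_C\neq 0$ this encodes the second half of \eqref{eq:condiciones sobre f}: if no $D$ satisfies $j_D=y$ then the right-hand side vanishes and $e_{j_C,y}=0$, while a contributing $D$ yields $e_{j_C,y}=\tilde E_{C,D}$ (up to rescaling the $\nu$'s). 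For the converse, given $({\bf j},\bs\nu)$ satisfying \eqref{eq:condiciones sobre f}, defining $f(\tilde b_C)=\nu_C w_{j_C}$ on the basis of $\mJ^2(W)$ and extending linearly, the two calculations above run backwards to yield module and comodule compatibility, respectively, so $f\in\homh(\mJ^2(W),W)$. I expect the main subtlety to be the bookkeeping on the comodule side: faithfulness of $X$ secures uniqueness of $j_C$ on the module side, but collisions among the $j_D$ for varying $D\in\mR'$ together with the normalization of $\bs\nu$ require care when translating $\nu_C e_{j_C,y}=\sum_{D:\,j_D=y}\nu_D\tilde E_{C,D}$ into the clean form stated in \eqref{eq:condiciones sobre f}.
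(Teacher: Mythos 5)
Your proof is correct and follows essentially the same route as the paper's: the paper deduces $f(\tilde b_C)=\nu_C\,w_{j_C}$ by observing that $W$ is a multiplicity-free sum of one-dimensional simple $H$-modules (Proposition \ref{prop:oK dual}, resting on faithfulness), which is exactly what your coefficient expansion and the specialization $y=z\rhd x$ establish, and the module/comodule compatibilities then yield the two conditions just as you describe. If anything, your explicit relation $\nu_C\,e_{j_C,y}=\sum_{D:\,j_D=y}\nu_D\,\tilde E_{C,D}$ on the comodule side is more careful than the paper, which simply declares the second equality of \eqref{eq:condiciones sobre f} ``immediate.''
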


\begin{proof}
By assumption $f$ is a morphism of $H$-modules. Since $W$ is the direct sum of non-isomorphic simple $H$-modules, by Lemma \ref{prop:oK dual}, there are $j_C\in X$ and $\nu_C\in\ku$ for every $C\in\mR'$ such that $f(\tilde b_C)=\nu_C\, w_{j_C}$.

In the proof of Lemma \ref{le:step 1 para copunteada}, we saw that 
$$e_{xy}\cdot\tilde b_C=\delta_{i_{2}\rhd(i_1\rhd x),y}\,q_{i_1,x}\,q_{i_{2},i_1\rhd x}\,\tilde b_C.$$
Thus, the first equality of \eqref{eq:condiciones sobre f} holds because $f(e_{xy}\cdot\tilde b_C)=e_{xy}\cdot\, f(w_{j_C})$. Since $f$ also is a $H$-comodule map, the second equality of \eqref{eq:condiciones sobre f}  is immediate.

The reciprocal statement is clear.
\end{proof}

\begin{rem}\label{rem:condition for 0 copointed}
Let $C=\{(i_2,i_1), \dots \}\in\mR'$. The following are direct consequences of Lemmas \ref{le:step 1 para copunteada} and \ref{le:step 3 para copunteada}. 
\begin{enumerate}
\item If $\phi_{i_2}\phi_{i_1}\neq\id$ and $\bs\lambda\in\bs\Lambda$, then $\lambda_C=0$.
\item If $\phi_{i_2}\phi_{i_1}\neq\phi_j$ for all $j\in X$, then $f(\tilde b_C)=0$.
\end{enumerate}
\end{rem}

We can summarize the results from \S \ref{sec:liftings} in this context as follows.

\begin{pro}\label{pro:copointed-review}
Set $\mH=\widehat{\B}_2(q,X)\# H$. If $\bs\lambda\in\bs\Lambda$, then 
\begin{enumerate}\renewcommand{\theenumi}{\alph{enumi}}
\renewcommand{\labelenumi}{(\theenumi)}
\item $\mA(\bs\lambda)\coloneqq \mE(\bs\lambda)\# H\in\Cleft(\mH)$.
\item $\mH(\bs\lambda)$ is a cocycle deformation of $\mH$.
\item $\gr_{\mathfrak{F}} \mH(\bs\lambda) =\mH$. 
\end{enumerate}
If $\B(q,X)$ is quadratic, then
 \begin{enumerate}
 \item[(d)] $\mH(\bs\lambda)$ is a lifting of $W(q,X)$.
 \end{enumerate}
Assume also that $X$ is faithful and Remark \ref{rem:condition for 0 copointed} (2) holds, then 
 \begin{enumerate}
 \item[(d)] every lifting of $W(q,X)$ is isomorphic to $\mH(\bs\lambda)$ for some $\bs\lambda\in\bs\Lambda$.\qed
 \end{enumerate} 
\end{pro}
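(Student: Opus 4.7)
The strategy is to recognize that this proposition is the copointed analog of Proposition \ref{pro:pointed-review} combined with the classification statement of Proposition \ref{pro:condition}, and to assemble the conclusions by carefully invoking the general machinery of Section \ref{sec:liftings} together with the preparatory lemmas \ref{le:step 1 para copunteada} and \ref{le:step 3 para copunteada}. Since $\Gc_0=\{\tilde b_C\}_{C\in\mR'}$ consists of the homogeneous quadratic generators of $\cJ(q,X)$ (identified in Proposition \ref{pro:quadratic-dual}), we are working with $N=0$ in the recursion of Section \ref{sec:strategy}, so the single step $\Lambda_0\mapsto \Lambda_1$ produces every element of the target family.

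For assertions (a)--(c), the plan is to verify the three conditions of Definition \ref{def:varphi A E}. Condition \eqref{eqn:cond1} is built into the definition of $\bs\Lambda$ in \eqref{eq:Lambda para BqX}; condition \eqref{eqn:cond2} is exactly the content of Lemma \ref{le:step 1 para copunteada}; condition \eqref{eqn:cond3} is automatic by Remark \ref{rem:free} since the elements of $\Gc_0$ are all homogeneous of degree $2$. Hence $\varphi_{\bs\lambda}\in\Alg^{\mH_0}(Y_0,\mA_0)$ and \cite[Theorem 3.3]{AAGMV} gives $\mA(\bs\lambda)=\mE(\bs\lambda)\#H\in\Cleft(\mH)$, proving (a). Part (b) is then the general statement in Section \ref{sec:cleft} that $L(\mA,\mH)$ is a cocycle deformation of $\mH$, combined with Lemma \ref{le:L1} to identify this Schauenburg Hopf algebra with $\mH(\bs\lambda)$ as presented in \eqref{eqn:H-pointed}; here the identification $L(\mA_1(\bs\lambda),\mH_1)\simeq\mH_1(\bs\lambda)$ of that lemma applies verbatim because we are performing the first iteration, so $\gamma_0=\id$ and $\gamma_0^{-1}=\cS$. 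Finally (c) is \cite[Proposition 4.14 (c)]{AAGMV}.

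For (d), the quadraticity assumption on $\B(q,X)$ forces $\mH=\widehat{\B}_2(q,X)\#H=\B(q,X)\#H$. Combined with (c) this immediately shows that $\gr_{\mathfrak F}\mH(\bs\lambda)\simeq \B(q,X)\#H$, which is precisely what it means for $\mH(\bs\lambda)$ to be a lifting of $W(q,X)\in\ydh$.

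For the final assertion on completeness, the plan is to apply Lemma \ref{thm:liftings-step-3}, which reduces the problem to checking that $\homh(\mJ^2(W),W)=0$. Let $f\in\homh(\mJ^2(W),W)$; by Lemma \ref{le:step 3 para copunteada} we have $f(\tilde b_C)=\nu_C w_{j_C}$ for some $j_C\in X$ and $\nu_C\in\ku$, subject to the constraints \eqref{eq:condiciones sobre f}. For each $C=\{(i_2,i_1),\dots\}\in\mR'$, the first equation of \eqref{eq:condiciones sobre f} forces $\phi_{j_C}=\phi_{i_2}\phi_{i_1}$, which is ruled out whenever the hypothesis of Remark \ref{rem:condition for 0 copointed} (2) applies; faithfulness of $X$ is used to make $\phi_{j_C}$ determine $j_C$. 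Hence $\nu_C=0$ for every $C\in\mR'$ and $f\equiv 0$, so the hypothesis of Lemma \ref{thm:liftings-step-3} is fulfilled and every lifting has the form $\mH(\bs\lambda)$ for some $\bs\lambda\in\bs\Lambda$. The main subtlety to watch is that this last step uses both the faithfulness of $X$ (to reconstruct $j_C$ from $\phi_{j_C}$) and the standing assumption $K=H$ which guarantees that the $H$-action on $W$ decomposes as a sum of non-isomorphic one-dimensional modules via $\vartheta_z$ (Proposition \ref{prop:oK dual}), a fact implicitly used in Lemma \ref{le:step 3 para copunteada}.
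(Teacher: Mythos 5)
Your proposal is correct and follows essentially the route the paper intends: the proposition is stated as a summary of the machinery of \S\ref{sec:liftings} (Definition \ref{def:varphi A E}, Remark \ref{rem:free}, Lemma \ref{le:L1}, Lemma \ref{thm:liftings-step-3}) specialized via Lemmas \ref{le:step 1 para copunteada} and \ref{le:step 3 para copunteada} and Remark \ref{rem:condition for 0 copointed}, which is exactly how you assemble it. The only minor imprecision is the attribution of faithfulness: it is used inside Lemma \ref{le:step 3 para copunteada} (via Proposition \ref{prop:oK dual}) to force $f(\tilde b_C)$ onto a single $w_{j_C}$, while the contradiction with $\phi_{j_C}=\phi_{i_2}\phi_{i_1}$ comes directly from the hypothesis of Remark \ref{rem:condition for 0 copointed} (2) without needing $\phi_{j_C}$ to determine $j_C$ — a point you in fact also state correctly in your closing sentence.
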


In the next subsections we will see that our main examples of braided vector spaces satisfy the hypothesis of the above proposition. 
We give a more concrete description of the algebras $\mH(\bs\lambda)$ for $H=\ku^{\Sn_4}$ in \S \ref{sec:cop-S4}, which allows us to study the isomorphism classes.

\subsubsection{The rack of transpositions}\label{sec:cop-transp}
Let $X=\cO_2^n$, $q\equiv-1$ and $\chi$ be the $2$-cocycle given in \S \ref{sec:examples}. Thus, $\mR'=\mR'(X,-1)=\mR'(X,\chi)$ is in bijection with $X\times X(2)\times X(3)$, 
recall Example \ref{exa:pointed}. We keep the notation above.

\begin{lem}\label{lem:cop-Lambda-1}
Let $W=W(X,-1)$ or $W(X,\chi)$. Then 
\begin{align}\label{eq:Lambda copunteado para X -1}
\bs\Lambda=\left\{(\lambda_C)_{C\in\mR'}| \, \lambda_C=0  \text{ if } C\neq\{ x,x\}, \forall\,x\in X\right\}
\end{align}
 and $\Hom_H^H(\mJ^2(W),W)=0$.
\end{lem}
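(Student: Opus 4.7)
The plan is to verify the two assertions independently. The characterization of $\bs\Lambda$ will follow from a direct case analysis of condition \eqref{eq:copointed condition-1}, while the vanishing of $\homh(\mJ^2(W),W)$ reduces, via Remark \ref{rem:condition for 0 copointed}(2), to a statement about inner automorphisms of $\Sn_n$.

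For the characterization of $\bs\Lambda$, I analyse when \eqref{eq:copointed condition-1} forces $\lambda_C=0$. For a singleton class $C=\{(x,x)\}$, the involution identity $x^2=e$ gives $i_2\rhd(i_1\rhd z)=x\rhd(x\rhd z)=z$ for every $z\in X$, and the scalar factor $q_{x,z}\,q_{x,x\rhd z}$ equals $1$ in both cases (trivially for $q\equiv -1$; for $q=\chi$ by the 1-cocycle identity applied to $x^2=e$, which I will confirm by a short check on each of the configurations $\{x,z\}\cap\{x,z\}=\emptyset$, $|\{x,z\}\cap|=1$, $x=z$). Hence the right-hand side of \eqref{eq:copointed condition-1} always coincides with $\delta_{z,y}$ and no constraint is imposed. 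For $|C|>1$ I will exhibit $z\in X$ with $i_2\rhd(i_1\rhd z)\neq z$: if $|C|=2$, then $i_1,i_2$ are disjoint transpositions supported on four letters $\{a,b,c,d\}$ and $(i_2 i_1)(ac)(i_2 i_1)^{-1}=(bd)$, so $z=(ac)$ works; if $|C|=3$, then $i_2 i_1$ is a $3$-cycle and conjugates the transposition $z=i_1$ to a different one. Setting $y=z$ in \eqref{eq:copointed condition-1} then yields $1\neq 0$, forcing $\lambda_C=0$. The remaining non-vanishing condition $\mE(\bs\lambda)\neq 0$ for the families supported on singleton classes is handled by an argument analogous to Proposition \ref{pro:nonzero}, via \cite{GAP} when needed.

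For the vanishing of $\homh(\mJ^2(W),W)$, pick a morphism $f$ and write $f(\tilde b_C)=\nu_C\,w_{j_C}$ using Lemma \ref{le:step 3 para copunteada}. By Remark \ref{rem:condition for 0 copointed}(2) it suffices to verify, for every $C\in\mR'$, that $\phi_{i_2}\phi_{i_1}\neq\phi_j$ as permutations of $X$ for any $j\in X$. The composition $\phi_{i_2}\phi_{i_1}$ acts as conjugation by $g:=i_2 i_1$, which equals $e$ (case $|C|=1$), a product of two disjoint transpositions (case $|C|=2$), or a $3$-cycle (case $|C|=3$); in no case is $g$ a transposition. Since $X=\cO_2^n$ generates $\Sn_n$ with trivial centre (for $n\geq 3$), two inner automorphisms coincide on $X$ iff they agree as elements of $\Sn_n$, so no transposition $j$ can equal such a $g$. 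Hence $\nu_C=0$ for every $C$ and $f=0$.

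The main obstacle is keeping the first argument uniform across the two cocycles $q\equiv -1$ and $q=\chi$, as the sign conventions of $\chi$ might appear to obstruct the cancellation; this reduces, however, to recognising $q_{x,z}\,q_{x,x\rhd z}=1$ as the 1-cocycle identity applied to the relation $x^2=e$, after which both cocycles are treated simultaneously.
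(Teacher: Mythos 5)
Your proposal is correct and follows essentially the same route as the paper: it verifies condition \eqref{eq:copointed condition-1} class by class (the paper asserts this without the case analysis), invokes Proposition \ref{pro:nonzero} for $\mE(\bs\lambda)\neq 0$, and derives the vanishing of $\homh(\mJ^2(W),W)$ from Lemma \ref{le:step 3 para copunteada} via Remark \ref{rem:condition for 0 copointed}(2). The explicit checks you add --- that $q_{x,z}q_{x,x\rhd z}=1$ via the $1$-cocycle identity, that $i_2\rhd(i_1\rhd-)\neq\id$ on $X$ for non-singleton classes, and that $i_2i_1$ never has the cycle type of a transposition --- are exactly the details the paper leaves implicit, and they are all accurate.
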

\pf
In this setting, \eqref{eq:copointed condition-1}  states that $\lambda_C$ must vanish if $C\neq\{ x,x\}$. Hence  $\ku\Gc(\bs\lambda)$ is a $H$-submodule by Lemma 
\ref{le:step 1 para copunteada}. Moreover, $\mE(\bs\lambda)\neq0$ by Proposition \ref{pro:nonzero}, cf. Lemma \ref{lem:nonzero}. Indeed, $\mE(\bs\lambda)\simeq \mE_{\bs\alpha}(0,0)$ when 
$q\equiv-1$ and $\mE(\bs\lambda)\simeq \mE^\chi_{\bs\alpha}(0)$ when $q=\chi$, for $\alpha_{(ij)}=\lambda_C$, $C=\{(ij),(ij)\}\in\mR'$. This proves \eqref{eq:Lambda copunteado para X -1}. 

Finally, $\Hom_H^H(\mJ^2(W),W)=0$ by Lemma \ref{le:step 3 para copunteada} and Remark \ref{rem:condition for 0 copointed}.
\epf

\begin{rem}\label{rem:bijection-1}
In particular, $\bs\Lambda\simeq \k^X$, via $\lambda_C\mapsto \lambda_x$, if $C=\{(x,x)\}$.
\end{rem}

\subsubsection{The rack of $4$-cycles}\label{sec:cop-cycles}

Let $Y=\cO_2^4$ and $q\equiv-1$, recall \S \ref{sec:examples}. Then $\mR'=\mR'(Y,-1)$ consists of singletons $\bigl\{(\sigma,\sigma)\bigr\}$, pairs $\bigl\{(\sigma,\sigma^{-1}),\,(\sigma^{-1},\sigma)\bigr\}$ and clas\-ses of the form $\bigl\{(\sigma,\tau),(\nu,\sigma),(\tau,\nu)\bigr\}$ for $\sigma,\tau,\nu=\sigma\tau\sigma^{-1}\in Y$. The corresponding quadratic relations $\tilde b_C=b_C$ were listed in Theorem \ref{thm:nichols} (c).

\begin{lem}\label{lem:cop-Lambda-2}
Let $W=W(Y,-1)$. Then 
\begin{align}\label{eq:Lambda copunteado para Y -1}
\bs\Lambda=\left\{\bs\lambda\in\ku^{\mR'}| \, \lambda_C=0  \text{ if } C   \text{ is not a pair}\right\}
\end{align}
and $\Hom_H^H(\mJ^2(W),W)=0$.
\end{lem}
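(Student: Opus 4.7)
The plan is to mirror closely the proof of Lemma \ref{lem:cop-Lambda-1}. For the description of $\bs\Lambda$, I will appeal to Lemma \ref{le:step 1 para copunteada} together with Remark \ref{rem:condition for 0 copointed} (1), which forces $\lambda_C = 0$ whenever $\phi_{i_2}\phi_{i_1} \neq \id$ on $Y$, for any representative $(i_2, i_1)$ of $C$. For the Hom-vanishing, I will use Lemma \ref{le:step 3 para copunteada} and Remark \ref{rem:condition for 0 copointed} (2), which give $f(\tilde b_C) = 0$ as soon as $\phi_{i_2}\phi_{i_1} \neq \phi_j$ for every $j \in Y$. The common technical input is that, since the 4-cycles generate $\Sn_4$ and $Z(\Sn_4) = 1$, the map $g \mapsto \phi_g$ from $\Sn_4$ to self-maps of $Y$ by conjugation is injective. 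Hence $\phi_{i_2 i_1} = \id$ iff $i_2 i_1 = e$, and $\phi_{i_2 i_1} = \phi_j$ for some $j \in Y$ iff $i_2 i_1 \in Y$. The product $i_{h+1} i_h$ is constant along the class $C$ by a direct computation from the rack relation $i_{h+2} = i_{h+1} \rhd i_h$, so it is legitimate to speak of $i_2 i_1$ attached to $C$.

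I will then go case by case through the three families of classes in $\mR'$ listed in \S \ref{sec:cop-cycles}. For a singleton $\{(\sigma,\sigma)\}$, $i_2 i_1 = \sigma^2$ is a nontrivial double transposition. For a pair $\{(\sigma,\sigma^{-1}),(\sigma^{-1},\sigma)\}$, $i_2 i_1 = e$. For a triple with entries $\sigma,\tau,\nu = \sigma\tau\sigma^{-1}$, $i_2 i_1 = \sigma\tau$ is a product of two odd permutations, hence an even element of $\Sn_4$ and in particular not a 4-cycle, so not in $Y$. This parity observation is in my view the one genuinely nontrivial structural input. In every case $i_2 i_1 \notin Y$, which gives $\homh(\mJ^2(W),W) = 0$; and $i_2 i_1 = e$ only when $C$ is a pair, so \eqref{eq:copointed condition-1} forces $\lambda_C = 0$ exactly when $C$ is not a pair.

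It remains to verify $\mE(\bs\lambda) \neq 0$ for the admissible $\bs\lambda$. I will identify $\mE(\bs\lambda)$ with a specialisation of the algebra $\widetilde{\mE}_{\bs\beta}(0,0)$ from the appendix, where $\bs\beta \in \ku^{\cO_4^4}$ absorbs the (possibly distinct) pair-parameters $\lambda_C$, and invoke Proposition \ref{pro:nonzero}. In contrast with the pointed case of Lemma \ref{lem:nonzero}, here the values $\lambda_C$ on different pairs are not forced to coincide, but this is exactly accommodated by the $\cO_4^4$-indexed family $\bs\beta$ in the appendix definition, so no additional obstacle arises at this step.
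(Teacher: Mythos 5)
Your proposal is correct and follows essentially the same route as the paper, whose proof of this lemma is just ``Follows as Lemma \ref{lem:cop-Lambda-1}, using Proposition \ref{pro:nonzero}'': you verify \eqref{eq:copointed condition-1} and Remark \ref{rem:condition for 0 copointed} class by class (the product $i_2i_1$ being $e$, a double transposition, or an even non-identity element, hence never in $Y$) and then identify $\mE(\bs\lambda)$ with $\widetilde{\mE}_{\bs\beta}(0,0)$ to invoke Proposition \ref{pro:nonzero}. Your write-up simply makes explicit the details (injectivity of $g\mapsto\phi_g$ via $\langle Y\rangle=\Sn_4$ and $Z(\Sn_4)=1$, and the parity argument) that the paper leaves implicit.
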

\pf
Follows as Lemma \ref{lem:cop-Lambda-1}, using Proposition \ref{pro:nonzero}.
\epf 

\begin{rem}\label{rem:bijection-2}
Here, $\bs\Lambda\simeq \k^Y$, via $\lambda_C\mapsto \lambda_\sigma$, if $C=\bigl\{(\sigma,\sigma^{-1}),\,(\sigma^{-1},\sigma)\bigr\}$.
\end{rem}

\subsection{Copointed Hopf algebras over $\Sn_4$}\label{sec:cop-S4}
In this section we specialize the results in \S \ref{sec:cop-cycles} and \ref{sec:cop-cycles} to the case $H=\k^{G}$, $G=\Sn_4$.
 
Let $X=\cO_2^4$ and $W\in\ydcd$ denote the principal realization of $W(-1,X)$, $W(\chi,X)$ or $W(-1,Y)$. 
The YD-structure on $W$ is given by:
\begin{align*}
\delta_t\cdot w_{x}&=\delta_{t,x}w_{x}, & \lambda(w_{x})&=\sum_{t\in G}\sgn(t)\delta_t\ot w_{t^{-1}xt}, & W&=W(-1,X);\\
\delta_t\cdot w_{x}&=\delta_{t,x}w_{x}, & \lambda(w_{x})&=\sum_{t\in G}\chi(t)\delta_t\ot w_{t^{-1}xt}, , & W&=W(\chi,X);\\
\delta_t\cdot w_{\sigma}&=\delta_{t,\sigma^{-1}}w_{\sigma}, & \lambda(w_{\sigma})&=\sum_{t\in G}\sgn(t)\delta_t\ot w_{t^{-1}\sigma t}, & W&=W(-1,Y);
\end{align*}
for each $x\in X$, $\sigma\in Y$ and $t\in\Sn_4$. 

\

The following lemma permits the application of our previous results.

\begin{lem}
Let $W=W(-1,X)$, $W(\chi,X)$ or $W(-1,Y)$. Then $\ku^{\Sn_4}$ is generated as an algebra by the comatrix elements associated to $W$.
\end{lem}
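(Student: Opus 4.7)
The plan is to exploit the commutativity of $\ku^{\Sn_4}$. Since $\ku^{\Sn_4}$ is a commutative cosemisimple Hopf algebra, every sub-Hopf algebra has the form $\ku^{\Sn_4/N}$ for a unique normal subgroup $N \trianglelefteq \Sn_4$, embedded as the pullback of functions along $\Sn_4 \twoheadrightarrow \Sn_4/N$. The subalgebra $K$ of $\ku^{\Sn_4}$ generated by the $\{e_{x,y}\}$ is a sub-Hopf algebra by \eqref{eqn:subalgebra} (the $e_{x,y}$ span a subcoalgebra since $\Delta(e_{x,z}) = \sum_y e_{x,y} \otimes e_{y,z}$), so $K = \ku^{\Sn_4/N}$ for some $N$ and it suffices to show $N = \{e\}$.

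Unfolding the three coactions displayed just before the lemma, the comatrix elements take the explicit form
\[
e_{x,y} \;=\; \sum_{\substack{t \in \Sn_4 \\ t^{-1}xt = y}} \alpha_x(t)\,\delta_t,
\]
where $\alpha_x(t) = \sgn(t)$ in the cases $W(-1,X)$ and $W(-1,Y)$, and $\alpha_x(t)$ is (the $x$-component of) the 2-cocycle $\chi$ from \eqref{eqn:MS} in the case $W(\chi,X)$; in all three cases $\alpha_x(e) = 1$ and $\alpha_x(t) \in \{\pm1\}$. A function $f \in \ku^{\Sn_4}$ lies in $\ku^{\Sn_4/N}$ iff $f(tn) = f(t)$ for every $t \in \Sn_4$ and $n \in N$; imposing this on $f = e_{x,y}$ and evaluating at $t = e$ yields
\[
\alpha_x(n)\,[n^{-1}xn = y] \;=\; [x = y] \qquad \text{for all } n \in N,\ x,y \in X\ (\text{resp.~}Y).
\]
Taking $x \neq y$ forces $N$ to contain no element conjugating $x$ to another point of the rack; letting $y$ vary over $X$ (resp.~$Y$) this yields $N \subseteq C_{\Sn_4}(x)$ for every $x$ in the rack.

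To conclude I would compute the intersection of these centralizers directly in $\Sn_4$. For $X = \cO_2^4$, the centralizers $C_{\Sn_4}((12)) = \{e,(12),(34),(12)(34)\}$ and $C_{\Sn_4}((13)) = \{e,(13),(24),(13)(24)\}$ already intersect in $\{e\}$. For $Y = \cO_4^4$, the centralizers $C_{\Sn_4}((1234)) = \langle(1234)\rangle$ and $C_{\Sn_4}((1324)) = \langle(1324)\rangle$ of two non-commuting 4-cycles also intersect in $\{e\}$. Hence $N = \{e\}$ in each of the three cases, and therefore $K = \ku^{\Sn_4}$. I do not foresee any serious obstacle: the main point is to correctly transcribe the YD-coactions into the explicit support-and-sign formula for $e_{x,y}$, after which the problem reduces to the trivial group-theoretic computation above.
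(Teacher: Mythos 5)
Your proof is correct, but it takes a genuinely different route from the paper's. The paper argues by a dimension count: dualizing the coaction, $W$ decomposes as a direct sum of simple $\ku^{\Sn_4}$-comodules of dimensions $1,2,3$, so the coalgebra spanned by the comatrix elements has dimension $1+4+9=14$; since $K$ is a Hopf subalgebra, its dimension divides $24$ (Nichols--Zoeller), and the only divisor of $24$ that is at least $14$ is $24$ itself. You instead classify the Hopf subalgebras of $\ku^{\Sn_4}$ as the $\ku^{\Sn_4/N}$ for $N$ normal, write out $e_{x,y}=\sum_{t^{-1}xt=y}\alpha_x(t)\delta_t$, and observe that constancy on cosets of $N$ forces $N\subseteq\bigcap_{x}C_{\Sn_4}(x)$, which is trivial because conjugation acts faithfully on $\cO_2^4$ and on $\cO_4^4$. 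Both arguments are complete; yours is more elementary (no comodule decomposition, no Nichols--Zoeller) and pinpoints the obstruction concretely as the kernel of the conjugation action on the rack, while the paper's is shorter and needs only the isotypic dimensions rather than explicit formulas. One shared implicit step deserves a word: passing from ``the algebra generated by the $e_{x,y}$'' to ``Hopf subalgebra'' uses that this algebra is a subbialgebra (the $e_{x,y}$ span a subcoalgebra) and that subbialgebras of $\ku^G$ are automatically Hopf subalgebras (dually, bialgebra quotients of $\ku G$ are group algebras of quotient groups); your citation of \eqref{eqn:subalgebra} alone is slightly circular here, since that display simply \emph{defines} $K$ as the Hopf subalgebra generated, but the parenthetical fact you state closes this gap.
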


\begin{proof}
$W$ decomposes into the direct sum of three simple $\ku^{\Sn_4}$-comodules $U_i$ with $\dim U_i=i$ for $i=1,2,3$. This follows by dualyzing the $\Sn_4$-action. Then the coalgebra spanned by the comatrix elements associated to $W$ has dimension $14=1+4+9$. Therefore $K=\ku^{\Sn_4}$ because $\dim K$ has to divide $24=\dim\ku^{\Sn_4}$.
\end{proof}

\subsubsection{The liftings $\mH(\bs\lambda)$ up to isomorphism}
We give explicitly the presentation of the liftings of $W\in\ydcd$. Recall from Remarks \ref{rem:bijection-1} and \ref{rem:bijection-1}  that $\bs\Lambda\simeq \ku^X$,  $\bs\Lambda\simeq \ku^Y$, respectively. However, these sets are bigger than the family of isomorphism classes of liftings. 

For the rack $X$, we consider 
$$
\gA_4:=\biggl\{(\lambda_x)_{x\in X}\in\ku^{X}: \sum_{x\in X}\lambda_x=0\biggr\}.
$$
Here we write $\lambda_x\coloneqq \lambda_C$ when $C=\{(x,x)\}$. Given $\bs\lambda\in\gA_4$, we set
\begin{align*}
f_x^{\bs\lambda}=\sum_{g\in\Sn_4}(\lambda_x - \lambda_{g^{-1}xg})\,\delta_g \in \ku^{\Sn_4},\quad x\in X.
\end{align*}
The group $\Gamma_4:=\ku^{\times}\times\Aut(\Sn_4)$ acts on $\gA_4$ via $(\mu,\theta)\triangleright\bs\lambda=\mu(\lambda_{\theta(x)})_{x\in X}$. The class of $\bs\lambda$ in $\gA_4/\Gamma_4$ will be denoted $[\bs\lambda]$.

\begin{fed}\label{def: Aa}
Fix $\bs\lambda \in \gA_4$. We set
\begin{align*}
\cH_{[\bs\lambda]}\coloneqq T(-1, X)\#\ku^{\Sn_4}/\cI_{\bs\lambda}
\end{align*} 
where $\cI_{\bs\lambda}$ is the ideal generated by all the elements in \eqref{eq:Vn} and
\begin{align}\label{eq:rels-powers Aa}
w_x^2& - f_x^{\bs\lambda},\quad x\in X.
\end{align}
\end{fed}

\begin{fed}\label{def: Aachi}
Fix $\bs\lambda \in \gA_4$. We set
\begin{align*}
\cH^\chi_{[\bs\lambda]}\coloneqq (\chi,X)\#\ku^{\Sn_4}\hspace{-3pt}/\cJ_{\bs\lambda}
\end{align*}
where $\cJ_{\bs\lambda}$ is the ideal generated by all the elements in \eqref{eq:Wn} and \eqref{eq:rels-powers Aa}.
\end{fed}

\begin{pro}\label{prop:liftings of On2} 
\begin{enumerate}
\item If $L$ is a lifting of $W(-1,X)\in\ydcd$, then $L\simeq  \cH_{[\bs\lambda]}$ for one and only one $[\bs\lambda]\in\gA_4/\Gamma_4$.
\item If $L$ is a lifting of $W(\chi,X)\in\ydcd$, then $L\simeq  \cH^\chi_{[\bs\lambda]}$ for one and only one $[\bs\lambda]\in\gA_4/\Gamma_4$.
\end{enumerate}
\end{pro}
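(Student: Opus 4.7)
The plan is to apply Proposition \ref{pro:copointed-review} as the main vehicle, treating cases (1) and (2) in parallel. By Lemma \ref{lem:cop-Lambda-1}, the set of deforming parameters is $\bs\Lambda \simeq \ku^X$ via $(\lambda_C)_C \leftrightarrow (\lambda_x)_{x \in X}$, where $\lambda_x \coloneqq \lambda_{\{(x,x)\}}$, and moreover $\Hom_H^H(\mJ^2(W),W) = 0$. To invoke the last clause of Proposition \ref{pro:copointed-review} I would verify the hypothesis of Remark \ref{rem:condition for 0 copointed}\,(2) directly: for each $C \in \mR'$, the composition $\phi_{i_2}\phi_{i_1}$ is conjugation by a product of one, two or three transpositions in $\Sn_4$, which never coincides with conjugation by a single transposition since $Z(\Sn_4) = \{e\}$. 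Together with the faithfulness of $X = \mO_2^4$, this ensures that every lifting of $W$ is isomorphic to $\mH(\bs\lambda)$ for some $\bs\lambda \in \bs\Lambda$.

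Next I would unwind the presentation \eqref{eqn:H-pointed} of $\mH(\bs\lambda)$ concretely. The relations indexed by $C \in \mR'$ with $|C| \geq 2$ reduce to $\tilde b_C = 0$, which by Corollary \ref{cor:nichols} give exactly \eqref{eq:Vn} (when $q \equiv -1$) or \eqref{eq:Wn} (when $q = \chi$). For a diagonal class $C = \{(x,x)\}$, I would compute
\[
\tilde E_{C,\{(y,y)\}} \;=\; e_{x,y}^2 \;=\; \sum_{t\in\Sn_4:\ t^{-1}xt = y} \delta_t,
\]
using that both $\sgn$ and $\chi$ take values in $\{\pm 1\}$, so squaring collapses the cocycle. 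Summing over $y \in X$ yields $\sum_y \lambda_y\, e_{x,y}^2 = \sum_{t\in\Sn_4}\lambda_{t^{-1}xt}\,\delta_t$, so the defining relation becomes $w_x^2 = \sum_t(\lambda_x - \lambda_{t^{-1}xt})\,\delta_t = f_x^{\bs\lambda}$, matching Definitions \ref{def: Aa} and \ref{def: Aachi}. A direct check gives $f_x^{\bs\lambda + c \bs 1} = f_x^{\bs\lambda}$ for every $c \in \ku$, so the Hopf algebra depends only on $\bs\lambda$ modulo the line $\ku\cdot\bs 1$, and one may normalize $\sum_x \lambda_x = 0$, i.e.\ $\bs\lambda \in \gA_4$.

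Finally, I would identify the isomorphism classes with $\gA_4/\Gamma_4$. The $\Gamma_4$-action is realized by explicit isomorphisms: $\mu \in \ku^\times$ via the rescaling $w_x \mapsto \nu w_x$ with $\nu^{-2} = \mu$ (possible by algebraic closure), and $\theta \in \Aut(\Sn_4)$ via the Hopf automorphism of $\ku^{\Sn_4}$ induced by $\theta$, extended by $w_x \mapsto w_{\theta(x)}$. The main obstacle is the converse: showing that any Hopf algebra isomorphism $\Phi\colon \cH_{[\bs\lambda]} \xrightarrow{\sim} \cH_{[\bs\lambda']}$ arises from some $(\mu,\theta) \in \Gamma_4$. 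The argument follows the standard pattern: $\Phi$ restricts to an isomorphism of coradicals $\ku^{\Sn_4} \to \ku^{\Sn_4}$, producing $\theta \in \Aut(\Sn_4)$, and sends the infinitesimal braiding $W \subset (\cH_{[\bs\lambda]})_{(1)}$ to its counterpart as a $\theta$-twisted morphism in $\ydcd$. Combined with the decomposition of $W$ into three pairwise non-isomorphic simple $\ku^{\Sn_4}$-comodules (of dimensions $1, 2, 3$) and the requirement that the quadratic relations $w_x^2 = f_x^{\bs\lambda}$ be preserved---which rules out independent rescalings of the simple summands, since these would produce cross-summand terms incompatible with the relations---$\Phi|_W$ must act by a single global scalar $\mu$. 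Transporting $w_x^2 = f_x^{\bs\lambda}$ through $\Phi$ then yields $\bs\lambda' = (\mu,\theta)\triangleright\bs\lambda$, as desired. This kind of rigidity analysis is standard in this area; cf.\ \cite{AV2, GG}.
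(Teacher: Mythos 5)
Your proposal is correct and follows essentially the same route as the paper: invoke Proposition \ref{pro:copointed-review} (via Lemma \ref{lem:cop-Lambda-1}) to get $L\simeq\mH(\bs\lambda)$, rewrite the singleton relations as $w_x^2=f_x^{\bs\lambda}$ and normalize modulo $\ku\cdot\bs 1$ to land in $\gA_4$, then read off $(\mu,\theta)\in\Gamma_4$ from the restriction of an isomorphism to the coradical and to the infinitesimal braiding. The only loose point is your justification that $\Phi|_W$ is a single global scalar: the paper deduces this directly from colinearity (the coaction $\lambda(w_x)=\sum_t \sgn(t)\delta_t\ot w_{t^{-1}xt}$ links all the $w_x$, so a map that is diagonal in the basis $\{w_x\}$ by the module structure must have all diagonal entries equal), which is cleaner than your appeal to ``cross-summand terms'' in the quadratic relations, though your conclusion is the same.
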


\begin{proof}
(1) Fix $W=(-1,X)$, the proof for $W(\chi, X)$ in (2) is identical. First, by Proposition \ref{pro:copointed-review} (e), $L\simeq\mH(\bs\lambda)$ for some $\bs\lambda\in\bs\Lambda$. Set 
$|\bs\lambda|=\sum_{x\in X}\lambda_x$ and $\bs\lambda'=\bs\lambda-|\bs\lambda|(1, \dots, 1)\in\gA_4$. Then it follows that $\mH(\bs\lambda)\simeq \cH_{[\bs\lambda']}$.

Assume now there is a Hopf algebra isomorphism $\Theta:\cH_{[\bs\lambda]}\rightarrow\cH_{[\bs\lambda]}$ for some $\bs\lambda,\bs\lambda'\in\gA_4$. Then we have  a group automorphism $\theta$ of $\Sn_4$ induced by $(\Theta_{|\ku^{\Sn_4}})^*$. Let $\phi_{\bs\lambda}$ and $\phi_{\bs\lambda'}$ be lifting maps for $\cH_{[\bs\lambda]}$ and $\cH_{[\bs\lambda']}$. Hence there are non-zero scalars  $c_{x}$, $x\in X$, such that 
\[
\Theta\phi_{\bs\lambda}(w_x)=c_x\phi_{\bs\lambda'}(w_{\theta(x)}), \qquad x\in X,
\]
by Lemma \cite[Lemma 5.6 (e)]{GV} and using the adjoint action of $\ku^{\Sn_4}$. Since $\Theta$ is a coalgebra map  we deduce $c_x=c$ for all $x\in X$, for some fixed $c\in\k$. Therefore $\bs\lambda'=(c^2,\theta)\triangleright\bs\lambda$ and hence $[\bs\lambda']=[\bs\lambda]$.
\end{proof}

We now consider the rack $Y=\mO_4^4$. Let
$$
\widetilde\gA:=\biggl\{(\lambda_{y})_{y\in Y}\in\ku^{Y}: \lambda_{y^{-1}}=\lambda_y\,\mbox{and}\, \sum_{y\in Y}\lambda_{y}=0\biggr\}.
$$
We consider the group $\Gamma_4$ acting on $\widetilde\gA$ via $(\mu,\theta)\triangleright\bs\lambda=\mu(\lambda_{\theta\sigma})_{\sigma\in\cO_4^4}$. As above $[\bs\lambda]$ denotes the class of $\bs\lambda$ in $\widetilde\gA/\Gamma_4$. Given $\bs\lambda\in\widetilde\gA$, we introduce
\begin{align*}
\fij{y}^{\bs\lambda}=\sum_{g\in\Sn_4}(\lambda_{y} - \lambda_{g^{-1}y g})\delta_g \in \ku^{\Sn_4},\quad y\in Y.
\end{align*}
Hence, $f_{y^{-1}}^{\bs\lambda}=f_y^{\bs\lambda}$ because $\lambda_{y^{-1}}=\lambda_y$ for all $y\in Y$.

\begin{fed}\label{def: Ca}
Fix $\bs\lambda \in \widetilde\gA$. We set 
\begin{align*}
\widetilde\cH_{[\bs\lambda]}\coloneqq T(-1,Y)\#\ku^{\Sn_4}/\cL_{\bs\lambda}
\end{align*}
where $\cL_{\bs\lambda}$ is the ideal generated by \eqref{eq:U} and
\begin{align}
\label{eq:rel in Ca}
x_y x_{y^{-1}}+x_{y^{-1}}x_y& - f_{y}^{\bs\lambda},\quad y\in Y.
\end{align}
\end{fed}

\begin{pro}\label{prop:liftings of Y} Let $L$ be a lifting of $W(-1,Y)\in\ydcd$. Then $L$ is isomorphic to $\widetilde\cH_{[\bs\lambda]}$ for one and only one $[\bs\lambda]\in\widetilde\gA_4/\Gamma_4$.
\end{pro}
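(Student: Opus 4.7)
The argument follows the pattern of Proposition~\ref{prop:liftings of On2}, now for the rack $Y=\mO_4^4$. First I would verify the hypotheses of Proposition~\ref{pro:copointed-review}~(e): $\B(-1,Y)$ is quadratic by Corollary~\ref{cor:nichols}~(c); the rack $Y$ is faithful; and $\Hom_H^H(\mJ^2(W),W)=0$ by Lemma~\ref{lem:cop-Lambda-2}, giving the condition required by Remark~\ref{rem:condition for 0 copointed}~(2). Hence every lifting $L$ of $W=W(-1,Y)$ satisfies $L\simeq\mH(\bs\lambda)$ for some $\bs\lambda\in\bs\Lambda$. By Lemma~\ref{lem:cop-Lambda-2} combined with Remark~\ref{rem:bijection-2}, any such $\bs\lambda$ is determined by scalars $(\lambda_y)_{y\in Y}$ with $\lambda_y=\lambda_{y^{-1}}$ (all other $\lambda_C$ vanish), the parameter $\lambda_y$ being attached to the pair class $C=\{(y,y^{-1}),(y^{-1},y)\}$.

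Next I would work out the presentation of $\mH(\bs\lambda)$ explicitly. Using the formula for $\tilde E_{C,D}$ given before \S\ref{sec:cop-S4}, the explicit form $e_{y,z}=\sum_{t:\,t^{-1}yt=z}\sgn(t)\,\delta_t$ of the comatrix elements, and orthogonality of the idempotents $\{\delta_t\}_{t\in\Sn_4}$, a direct computation gives
\begin{align*}
\tilde E_{C,D_z}=\sum_{\substack{t\in\Sn_4\\ t^{-1}yt\in\{z,z^{-1}\}}}\delta_t,
\end{align*}
for $D_z=\{(z,z^{-1}),(z^{-1},z)\}$. Summing over pair classes and using $\lambda_z=\lambda_{z^{-1}}$ yields $\sum_D\lambda_D\tilde E_{C,D}=\sum_{t\in\Sn_4}\lambda_{t^{-1}yt}\,\delta_t$, so the relation from \eqref{eqn:H-pointed} attached to $C$ reads
\begin{align*}
x_y x_{y^{-1}}+x_{y^{-1}}x_y=\sum_{t\in\Sn_4}(\lambda_y-\lambda_{t^{-1}yt})\,\delta_t=f_y^{\bs\lambda},
\end{align*}
which is exactly \eqref{eq:rel in Ca}. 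Since $f_y^{\bs\lambda}$ depends only on the differences $\lambda_y-\lambda_{t^{-1}yt}$, replacing $\bs\lambda$ by $\bs\lambda'=\bs\lambda-\tfrac{1}{|Y|}\bigl(\sum_{z\in Y}\lambda_z\bigr)(1,\dots,1)$ preserves the relations and lands in $\widetilde\gA$, whence $\mH(\bs\lambda)\simeq\widetilde\cH_{[\bs\lambda']}$.

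For uniqueness, suppose $\Theta:\widetilde\cH_{[\bs\lambda]}\to\widetilde\cH_{[\bs\lambda']}$ is a Hopf algebra isomorphism. Then $\Theta$ restricts to an automorphism of the coradical $\ku^{\Sn_4}$, dualizing to some $\theta\in\Aut(\Sn_4)$. Invoking \cite[Lemma~5.6~(e)]{GV} together with the $\ku^{\Sn_4}$-adjoint action yields non-zero scalars $c_y$ with $\Theta\phi_{\bs\lambda}(x_y)=c_y\,\phi_{\bs\lambda'}(x_{\theta(y)})$; exactly as in the proof of Proposition~\ref{prop:liftings of On2}, the coalgebra structure forces $c_y=c$ to be independent of $y$. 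Substituting into \eqref{eq:rel in Ca} yields $\bs\lambda'=(c^2,\theta)\triangleright\bs\lambda$, hence $[\bs\lambda']=[\bs\lambda]$ in $\widetilde\gA/\Gamma_4$. The main technical obstacle is the explicit collapse of the comatrix sum described in the second paragraph; once that identification with $f_y^{\bs\lambda}$ is secured, the remainder is a direct transcription of the transposition case.
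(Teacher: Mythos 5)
Your proposal is correct and follows exactly the route the paper takes: the published proof of this proposition is literally ``It follows as Proposition~\ref{prop:liftings of On2}'', and your argument is precisely that transcription (hypotheses of Proposition~\ref{pro:copointed-review} via Corollary~\ref{cor:nichols}~(c) and Lemma~\ref{lem:cop-Lambda-2}, normalization into $\widetilde\gA$, and the $\Theta$/lifting-map uniqueness argument), with the explicit collapse of $\sum_D\lambda_D\tilde E_{C,D}$ into $f_y^{\bs\lambda}$ being a correct verification that the paper leaves implicit.
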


\begin{proof}
It follows as Proposition \ref{prop:liftings of On2}.
\end{proof}

\section{Appendix}

In this section we introduce three families of algebras, all of which we show to be nontrivial using \texttt{GAP}, and that are essential to show that the Hopf algebras introduced along the article are indeed liftings of a given braided vector space. In particular, they are examples of PBW deformations of  Fomin-Kirillov algebras, as recently defined in \cite{HV}. 

Let us set, for $n\geq 3$, $X_n=\cO_2^n$ and $Y=\cO_4^4$ as in \S \ref{sec:examples}. Also, consider the constant cocycle $q\equiv-1$ on $X_n$ and $Y$, and the cocycle $\chi$ on $X_n$ as in \eqref{eqn:MS}.

\begin{fed}\label{def:algebras}
We set $V_n=W_n=\k X_n$, $U=\k Y$.
\begin{enumerate} [leftmargin=*]\renewcommand{\theenumi}{\alph{enumi}} 
\renewcommand{\labelenumi}{(\theenumi)}
\item For each family of scalars ${\bs\alpha}=(\alpha_{(ij)})_{i<j}\in \k$ and $\mu_1,\mu_2\in\k$, 
$\mE_{\bs\alpha}(\mu_1,\mu_2)$ is the quotient of $T(V_n)$ by the ideal generated by
\begin{align*}
\xij{ij}^2&=\alpha_{(ij)},\\
\xij{ij}\xij{kl}+\xij{kl}\xij{ij}&=\mu_1,\\
\xij{ij}\xij{ik}+\xij{ik}\xij{jk}+\xij{jk}\xij{ij}&=\mu_2,
\end{align*}
for all $(ij),(kl),(ik)\in X_n$ with $\#\{i,j,k,l\} = 4$. 

\smallskip
\item For each family of scalars ${\bs\alpha}=(\alpha_{(ij)})_{i<j}\in \k$ and $\mu\in\k$, 
$\mE^\chi_{\bs\alpha}(\mu)$ is the quotient of $T(W_n)$ by the ideal generated by
\begin{align*}
\xij{ij}^2&=\alpha_{(ij)},\\
\xij{ij}\xij{kl}-\xij{kl}\xij{ij}&=0 \,\mbox{ for all }\,\#\{i,j,k,l\} = 4, \\
\xij{ij}\xij{ik}-\xij{ik}\xij{jk}-\xij{jk}\xij{ij}&=\mu, \\
\xij{ik}\xij{ij}-\xij{jk}\xij{ik}-\xij{ij}\xij{jk}&=\mu,
\end{align*}
for all $1\leq i<j<k\leq n$. 

\smallskip
\item For each family of scalars ${\bs\beta}=(\beta_{\sigma})_{\sigma\in Y}\in \k$ and $\mu_1,\mu_2\in\k$, 
$\widetilde{\mE}_{\bs\beta}(\mu_1,\mu_2)$ is the quotient of $T(U)$ by the ideal generated by
\begin{align*}
x_{\sigma}^2&=\mu_1,\\
x_\sigma x_{\sigma^{-1}}+x_{\sigma^{-1}}x_\sigma&=\beta_{\sigma},\\
 x_\sigma x_{\tau}+x_\nu x_\sigma+x_\tau x_\nu&=\mu_2
\end{align*}
for all $\sigma,\tau\in Y$ with $\sigma\neq\tau^{\pm1}$ and $\nu=\sigma\tau\sigma^{-1}$.
\end{enumerate}
\end{fed}
\allowdisplaybreaks
\begin{pro}\label{pro:nonzero}
Assume $n=3,4$ and let $\mE$ be either $\mE_{\bs\alpha}(\mu_1,\mu_2)$, $\mE^\chi_{\bs\alpha}(\mu)$ or $\widetilde{\mE}_{\bs\beta}(\mu_1,\mu_2)$. Then $\mE\neq 0$.
\end{pro}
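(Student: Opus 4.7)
The plan is to verify nontriviality of each of the three families by exhibiting, for every choice of the parameters, a nonzero quotient or equivalently a set of linearly independent standard monomials. The natural strategy is to treat each algebra $\mE$ as a filtered deformation of the corresponding Nichols algebra $\B(X_n,-1)$, $\B(X_n,\chi)$ or $\B(Y,-1)$, which are known to be nonzero finite-dimensional by Theorem \ref{thm:nichols}. The associated graded of $\mE$ with respect to the length filtration is a quotient of the Nichols algebra; if we can show the reverse inequality on dimensions, the deformation is flat and in particular nonzero.

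Concretely, I would fix a monomial order on the free algebra generated by $\{x_{(ij)}\}$ (resp.\ $\{x_\sigma\}$) and attempt to show that the defining relations, in the form
\[
x_{\text{lead}} = (\text{lower terms}) + (\text{parameter correction}),
\]
constitute a noncommutative Gr\"obner basis of the ideal. The key point is the diamond lemma: every overlap ambiguity among the leading monomials of the quadratic relations must resolve in the deformed setting exactly as it does in the undeformed case, modulo the scalar corrections. Since in the undeformed case the Nichols algebra is known, the leading monomials yield a PBW basis of prescribed dimension (e.g.\ $12$ for $n=3$, $576$ for $n=4$ in the transpositions rack, and $5120$ for $Y$), and one only has to verify that introducing the constants $\alpha_{(ij)}, \beta_\sigma, \mu, \mu_1, \mu_2$ does not create new overlap obstructions. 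This verification is purely mechanical: every overlap of two leading monomials yields a polynomial identity in the parameters that must be checked to vanish in the quotient.

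Since the number of overlap ambiguities grows rapidly with $n$ and with the size of the rack (already for $Y=\mO_4^4$ there are $24^2 = 576$ pairs of generators and many relations to overlap), carrying out the verification by hand is not feasible. The plan is therefore to use \texttt{GAP} together with the \texttt{GBNP} Gr\"obner basis package to input the relations of Definition \ref{def:algebras} as polynomials over the rational function field $\Bbbk(\bs\alpha,\mu_1,\mu_2)$ (resp.\ $\Bbbk(\bs\alpha,\mu)$, $\Bbbk(\bs\beta,\mu_1,\mu_2)$) and compute a reduced Gr\"obner basis. One then reads off a basis of standard monomials and confirms that its cardinality agrees with the dimension of the corresponding undeformed Nichols/Fomin-Kirillov algebra, which in particular implies $\mE\neq 0$ for generic parameters. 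Specialization to any choice of parameters in $\Bbbk$ preserves the leading monomials, hence nonvanishing holds for all values.

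The main obstacle is computational rather than conceptual: the case $\widetilde\mE_{\bs\beta}(\mu_1,\mu_2)$ for $Y=\mO_4^4$ involves a generating set of $24$ variables and a large ideal of quadratic relations, so the Gr\"obner basis computation is the expensive step and has to be performed with care. As mentioned in the introduction, the program and log files supporting these computations are made available on the authors' webpages; the written proof will therefore be brief, reducing each of the three cases to the statement that the computer check succeeds.
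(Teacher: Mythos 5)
Your proposal is correct and is essentially the paper's own argument: the authors likewise prove $\mE\neq 0$ by computing a noncommutative Gr\"obner basis for the defining ideal with \texttt{GAP}/\texttt{GBNP} (with the parameters kept symbolic) and pointing to the log files, displaying the resulting basis for $\mE_{\bs\alpha}(\mu_1,\mu_2)$ explicitly. Your additional remarks on flatness, standard monomials and specialization are sound refinements of the same computation rather than a different route.
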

\pf
We check this using \texttt{GAP}, by computing a Gr\"obner basis for the ideal defining $\mE$. See files \texttt{O24-1.log}, \texttt{O24-chi.log} and \texttt{O44-1.log}. For instance, the  Gr\"obner basis for $\mE_{\bs\alpha}(\mu_1,\mu_2)$ 
is given by the generators of the ideal defining $\mE$ together with
\small
 \begin{align*}
&  x_{(13)}x_{(12)}x_{(13)} - x_{(12)}x_{(13)}x_{(12)} - \alpha_{(12)}+\alpha_{(13)}x_{(23)} - \mu_2x_{(13)} + \mu_2x_{(12)}, \\ &
  x_{(14)}x_{(12)}x_{(14)} - x_{(12)}x_{(14)}x_{(12)} - \alpha_{(12)}+\alpha_{(14)}x_{(24)} - \mu_2x_{(14)} + \mu_2x_{(12)}, \\ &
  x_{(14)}x_{(13)}x_{(12)} + x_{(14)}x_{(12)}x_{(23)} - x_{(23)}x_{(14)}x_{(13)} - \mu_2x_{(14)} + \mu_1x_{(13)}, \\ &
  x_{(14)}x_{(13)}x_{(23)} + x_{(14)}x_{(12)}x_{(13)} - x_{(23)}x_{(14)}x_{(12)} - \mu_2x_{(14)} + \mu_1x_{(12)}, \\ &
  x_{(14)}x_{(13)}x_{(14)} - x_{(13)}x_{(14)}x_{(13)} - \alpha_{(13)}+\alpha_{(14)}x_{(34)} - \mu_2x_{(14)} + \mu_2x_{(13)}, \\ &
  x_{(24)}x_{(23)}x_{(14)} - x_{(14)}x_{(12)}x_{(23)} - x_{(12)}x_{(24)}x_{(23)} - \mu_1x_{(24)} + \mu_2x_{(23)}, \\ &
  x_{(24)}x_{(23)}x_{(24)} - x_{(23)}x_{(24)}x_{(23)} - \alpha_{(23)}+\alpha_{(24)}x_{(34)} - \mu_2x_{(24)} + \mu_2x_{(23)}, \\ &
  x_{(14)}x_{(12)}x_{(13)}x_{(23)} - x_{(23)}x_{(14)}x_{(12)}x_{(23)} + \alpha_{(23)}x_{(14)}x_{(13)} + \mu_2x_{(23)}x_{(14)}\\
  &\qquad  + \mu_1x_{(12)}x_{(23)} -  \mu_1\mu_2, \\ &
  x_{(14)}x_{(12)}x_{(13)}x_{(14)} + x_{(13)}x_{(14)}x_{(12)}x_{(13)} + x_{(12)}x_{(13)}x_{(14)}x_{(12)} + \alpha_{(13)}\\
  &\qquad -\alpha_{(14)}x_{(24)}x_{(34)} + \mu_1 x_{(14)}x_{(13)} - \mu_2x_{(14)}x_{(12)} - \alpha_{(12)}+\alpha_{(13)}x_{(23)}x_{(24)} \\
    &\qquad - \mu_2x_{(13)}x_{(14)} + \mu_1x_{(13)}x_{(12)} + \mu_1x_{(12)}x_{( 14)} - \mu_2x_{(12)}x_{(13)} \\
      &\qquad - \alpha_{(13)}\mu_2-\mu_1\mu_2+\mu_2^2, \\ &
  x_{(14)}x_{(12)}x_{(23)}x_{(14)} + x_{(12)}x_{(14)}x_{(12)}x_{(23)} + \alpha_{(12)}-\alpha_{(14)}x_{(24)}x_{(23)} \\
    &\qquad - \mu_1x_{(14)}x_{(12)} - \mu_2x_{(23)}x_{(14)} - \mu_2x_{(12)}x_{(23)} + \mu_1\mu_2, \\ &
  x_{(14)}x_{(12)}x_{(13)}x_{(12)}x_{(23)} - x_{(23)}x_{(14)}x_{(12)}x_{(13)}x_{(12)} - \mu_2x_{(14)}x_{(12)}x_{(13)}\\
    &\qquad  - \mu_2x_{(23)}x_{(14)}x_{(1 3)} + \mu_2x_{(23)}x_{(14)}x_{(12)} + \mu_1x_{(12)}x_{(13)}x_{(12)} \\
      &\qquad + \alpha_{(12)}\alpha_{(13)}+\alpha_{(12)}\alpha_{(23)}-\alpha_{(13)}\alpha_{(23)}x_{(14)} + \mu_1\mu_2x_ {(13)} - \mu_1\mu_2x_{(12)}, \\ &
  x_{(14)}x_{(12)}x_{(13)}x_{(12)}x_{(14)}x_{(12)} + x_{(13)}x_{(14)}x_{(12)}x_{(13)}x_{(12)}x_{(14)} + \alpha_{(13)}\\
    &\qquad -\alpha_{(14)}x_{(14)}x_{(13)}x_{(24)}x_{ (34)} + \alpha_{(12)}-\alpha_{(13)}x_{(14)}x_{(12)}x_{(13)}x_{(24)} \\
      &\qquad - \mu_2x_{(14)}x_{(12)}x_{(13)}x_{(12)} - \alpha_{(12)}+\alpha_{(13)}x_{(23)}x_{(14)}x_{(12)}x_{( 24)} \\
        &\qquad - \mu_2x_{(13)}x_{(14)}x_{(12)}x_{(13)} - \mu_1x_{(13)}x_{(12)}x_{(14)}x_{(13)} - \mu_2x_{(13)}x_{(12)}x_{(14)}x_{(12)}\\
                  &\qquad  - \alpha_{(13)}+\alpha_{(14) }x_{(12)}x_{(14)}x_{(13)}x_{(34)} + \mu_1x_{(12)}x_{(14)}x_{(12)}x_{(23)} \\
    &\qquad - \mu_1x_{(12)}x_{(23)}x_{(14)}x_{(13)} - \mu_2x_{(12)}x_{(13)}x_{(14)} x_{(12)} - \mu_2x_{(12)}x_{(13)}x_{(12)}x_{(14)} \\
    &\qquad - \alpha_{(13)}\mu_2+\alpha_{(14)}\mu_2x_{(24)}x_{(34)} + \alpha_{(12)}\mu_1-\alpha_{(14)}\mu_1x_{(24)}x_{(23)} - \alpha_{(13)}\mu_1-\mu_1^2\\
    &\qquad  +\alpha_{(14)}\mu_1x_{(14)}x_{(34)} + \alpha_{(13)}\mu_2-\alpha_{(14)}\mu_2x_{(14)}x_{(24)} + \alpha_{(12)}\mu_1+\mu_2^2x_{(14)}x_{(12)} \\
    &\qquad +  \alpha_{(13)}\mu_1-\alpha_{(14)}\mu_1x_{(23)}x_{(34)} + \alpha_{(12)}\mu_2-\alpha_{(13)}\mu_2x_{(23)}x_{(24)} + \alpha_{(13)}\mu_2\\
    &\qquad  -\alpha_{(14)}\mu_2x_{(13)}x_{(34)}- \alpha_{(12)}\mu_2+\alpha_{(14)}\mu_2x_{(13)}x_{(24)} + \alpha_{(12)}\mu_1-\mu_1^2x_{(13)}x_{(14)}\\
    &\qquad   + \mu_2^2x_{(13)}x_{(12)}+ \alpha_{(12)}\mu_1-\alpha_{(13)}\mu_1x_{( 12)}x_{(24)} - \alpha_{(14)}\mu_2-\mu_1\mu_2\\
        &\qquad   +\mu_2^2x_{(12)}x_{(14)} + \alpha_{(12)}\alpha_{(14)}+\mu_2^2x_{(12)}x_{(13)} - \alpha_{(12)}\alpha_{(14)}\mu_2\\
                &\qquad   -\alpha_{(12)} \mu_1\mu_2+\alpha_{(13)}\mu_1\mu_2+\alpha_{(14)}\mu_2^2+\mu_1^2\mu_2-\mu_2^3, 
\\ &
  x_{(14)}x_{(12)}x_{(13)}x_{(12)}x_{(14)}x_{(13)} + x_{(12)}x_{(14)}x_{(12)}x_{(13)}x_{(12)}x_{(14)} - \alpha_{(12)}\\
    &\qquad +\alpha_{(14)}x_{(14)}x_{(12)}x_{(24)}x_ {(23)} - \alpha_{(13)}+\alpha_{(14)}x_{(14)}x_{(12)}x_{(23)}x_{(34)} \\
    &\qquad - \mu_2x_{(14)}x_{(12)}x_{(13)}x_{(12)} - \alpha_{(12)}+\alpha_{(14)}x_{(13)}x_{(14)}x_{(12)}x_ {(24)} \\
        &\qquad - \mu_1x_{(13)}x_{(14)}x_{(12)}x_{(13)} - \mu_2x_{(13)}x_{(12)}x_{(14)}x_{(13)}+ \mu_2x_{(12)}x_{(14)}x_{(12)}x_{(23)} \\
            &\qquad  - \mu_2x_{(12)}x_ {(14)}x_{(12)}x_{(13)} - \mu_2x_{(12)}x_{(23)}x_{(14)}x_{(13)} - \mu_1x_{(12)}x_{(13)}x_{(14)}x_{(12)}\\
                            &\qquad  - \mu_2x_{(12)}x_{(13)}x_{(12)}x_{(14)} +  -\alpha_{(13)}\mu_1+\alpha_{(14)}\mu_1x_{(24)}x_{(34)} + \alpha_{(12)}\mu_2
\\&\qquad -\alpha_{(14)}\mu_2x_{(24)}x_{(23)} - \alpha_{(12)}\mu_1+\alpha_{(14)}\mu_1x_{(14)}x_{(24)} +  \alpha_{(12)}\mu_1-\mu_1^2x_{(14)}x_{(13)} 
\\ &\qquad- \alpha_{(14)}\mu_2+\mu_2^2x_{(14)}x_{(12)} + \alpha_{(13)}\mu_2-\alpha_{(14)}\mu_2x_{(23)}x_{(34)} + \alpha_{(12)}\mu_1
\\ &\qquad -\alpha_{(13)}\mu_1x_{(23)}x_{(24)}+ \alpha_{(12)}\mu_2  -\alpha_{(14)}\mu_2x_{(13)}x_{(24)} + \alpha_{(12)}\alpha_{(14)}
\\ &\qquad-\mu_1^2+\mu_2^2x_{(13)}x_{(12)} + \alpha_{(12)}\mu_1- \mu_1^2x_{(12)}x_{(14)} + \mu_1\mu_2+\mu_2^2x_{(12)}x_{(13)}\\ &\qquad
 - \alpha_{(12)}\alpha_{(14)}\mu_2-\alpha_{(12)}\mu_1\mu_2+\alpha_{(13)}\mu_1\mu_2+\alpha_{(14)}\mu_2^2 +\mu_1^2\mu_2-\mu_2^3. 
\end{align*}
\epf

\normalsize

\end{document}